\newcommand{\driverOption}{}
  \renewcommand{\driverOption}{pdftex}
  \renewcommand{\driverOption}{dvips}
\newcommand{\hyperrefDriverOption}{}
	\renewcommand{\hyperrefDriverOption}{pdftex}
	\renewcommand{\hyperrefDriverOption}{hypertex}
\tikzset{node_white/.style={circle, draw, fill=white, inner sep=0pt, text width=0pt, text height=0pt, text depth=0pt, minimum size = 4pt}}
\tikzset{node_black/.style={circle, draw, fill=black, inner sep=0pt, text width=0pt, text height=0pt, text depth=0pt, minimum size = 4pt}}
\tikzset{marker_red/.style={rectangle, draw, fill=red, inner sep=0pt, text width=0pt, text height=0pt, text depth=0pt, minimum size = 7pt}}
\tikzset{marker_blue/.style={diamond, draw, fill=blue, inner sep=0pt, text width=0pt, text height=0pt, text depth=0pt, minimum size = 9pt}}
\tikzset{node_huge/.style={circle, draw, fill=white, inner sep=0pt, text width=0pt, text height=0pt, text depth=0pt, minimum size = 33pt}}
\tikzset{line_solid/.style={draw, thick}}
\tikzset{line_dashed/.style={draw, thick, dashed}}
\tikzset{line_dotted/.style={draw, thick, dotted}}
\tikzset{line_gray_bg/.style={draw=black!40!white, line width=7pt, line cap=round, line join=round}}
\tikzset{dashdot/.style={dash pattern=on .8pt off 2pt on 4pt off 2pt}}
\tikzset{dotdot/.style={dash pattern=on 1pt off 1pt}}
\tikzset{ptr/.style={decoration={markings,mark=at position 1 with {\arrow[scale=1.5,>=latex]{>}}},postaction={decorate}}}
	\newcommand{\PG}[1]{\marginpar{\parbox{4cm}{{\small {\bf PG:} #1}}}}
	\newcommand{\TM}[1]{\marginpar{\parbox{4cm}{{\small {\bf TM:} #1}}}}
	\newcommand{\PG}[1]{}
	\newcommand{\TM}[1]{}	
\newtheorem{theorem}{Theorem}
\newtheorem{lemma}[theorem]{Lemma}
\newtheorem{corollary}[theorem]{Corollary}
\newtheorem{conjecture}[theorem]{Conjecture}
\theoremstyle{definition}
\theoremstyle{remark}
\g@addto@macro{\endabstract}{\@setabstract}
\newcommand{\authorfootnotes}{\renewcommand\thefootnote{\@fnsymbol\c@footnote}}%
\begin{document}

\begin{center}

\authorfootnotes
\LARGE On the central levels problem\footnote{An extended abstract of this paper appeared in the Proceedings of the 47th International Colloquium on Automata, Languages, and Programming (ICALP 2020)~\cite{DBLP:conf/icalp/GregorMM20}.
This work was supported by Czech Science Foundation grant GA19-08554S. Torsten M\"utze is also supported by German Science Foundation grant~413902284.}
\vspace{2mm}

\large
Petr~Gregor\footnote{E-Mail: \texttt{gregor@ktiml.mff.cuni.cz}.}\textsuperscript{1},
Ond\v{r}ej Mi\v{c}ka\footnote{E-Mail: \texttt{micka@ktiml.mff.cuni.cz}.}\textsuperscript{1},
Torsten~M\"utze\footnote{E-Mail: \texttt{torsten.mutze@warwick.ac.uk}.}\textsuperscript{1,2}
\setcounter{footnote}{0}
\bigskip

\small

\textsuperscript{1}Department of Theoretical Computer Science and Mathematical Logic, \\ Charles University, Prague, Czech Republic \par
\textsuperscript{2}Department of Computer Science, University of Warwick, United Kingdom \par

\bigskip

\begin{minipage}{0.8\linewidth}
\textsc{Abstract.}
The \emph{central levels problem} asserts that the subgraph of the $(2m+1)$-dimensional hypercube induced by all bitstrings with at least $m+1-\ell$ many 1s and at most $m+\ell$ many 1s, i.e., the vertices in the middle $2\ell$ levels, has a Hamilton cycle for any $m\geq 1$ and $1\le \ell\le m+1$.
This problem was raised independently by Buck and Wiedemann, Savage, Gregor and {\v{S}}krekovski, and by Shen and Williams, and it is a common generalization of the well-known \emph{middle levels problem}, namely the case $\ell=1$, and classical binary Gray codes, namely the case $\ell=m+1$.
In this paper we present a general constructive solution of the central levels problem.
Our results also imply the existence of optimal cycles through any sequence of $\ell$ consecutive levels in the $n$-dimensional hypercube for any $n\ge 1$ and $1\le \ell \le n+1$.
Moreover, extending an earlier construction by Streib and Trotter, we construct a Hamilton cycle through the $n$-dimensional hypercube, $n\geq 2$, that contains the symmetric chain decomposition constructed by Greene and Kleitman in the 1970s, and we provide a loopless algorithm for computing the corresponding Gray code.
\end{minipage}

\vspace{2mm}

\begin{minipage}{0.8\linewidth}
\textsc{Keywords:} Gray code, Hamilton cycle, hypercube, middle levels, symmetric chain decomposition
\end{minipage}

\vspace{2mm}

\end{center}

\vspace{2mm}

\section{Introduction}

The \emph{$n$-dimensional hypercube}, or \emph{$n$-cube} for short, is the graph~$Q_n$ formed by all $\{0,1\}$-strings of length~$n$, with an edge between any two bitstrings that differ in exactly one bit.
This family of graphs has numerous applications in computer science and discrete mathematics, many of which are tied to famous problems and conjectures, such as the sensitivity conjecture of Nisan and Szegedy~\cite{MR1313531}, recently proved by Huang~\cite{MR4024566}; Erd\H{o}s and Guys' crossing number problem~\cite{MR382006} (see \cite{MR2440735}); F\"uredi's conjecture~\cite{Furedi1985} on equal-size chain partitions (see \cite{MR3349520}); Shearer and Kleitman's conjecture~\cite{MR532807} on orthogonal symmetric chain decompositions (see \cite{MR3952674}); the Ruskey-Savage problem~\cite{MR1201997} on matching extendability (see \cite{MR2354719,MR3936192}), and the conjectures of Norine, and Feder and Subi on edge-antipodal colorings~\cite{norine_2008,MR3043094}, to name just a few.

The focus of this paper are Hamilton cycles in the $n$-cube and its subgraphs.
A Hamilton cycle in a graph is a cycle that visits every vertex exactly once, and in the context of the $n$-cube, such a cycle is often referred to as a \emph{Gray code}.
Gray codes have found applications in signal processing, circuit testing, hashing, data compression, experimental design, binary counters, image processing, and in solving puzzles like the Towers of Hanoi or the Chinese rings; see Savage's survey~\cite{MR1491049}.
Gray codes are also fundamental for efficient algorithms to exhaustively generate combinatorial objects, a topic that is covered in depth in the most recent volume of Knuth's seminal series \emph{`The Art of Computer Programming'}~\cite{MR3444818}.

To start with, it is an easy exercise to show that the $n$-cube has a Hamilton cycle for any $n\geq 2$.
One such cycle is given by the classical \emph{binary reflected Gray code~$\Gamma_n$}~\cite{gray:patent}, defined inductively by $\Gamma_1:=0,1$ and $\Gamma_{n+1}:=0\Gamma_n,1\Gamma^R_n$, where $\Gamma^R$ denotes the reversal of the sequence~$\Gamma$, and $0\Gamma$ or $1\Gamma$ means prefixing all strings in the sequence~$\Gamma$ by~0 or~1, respectively.
For instance, this construction gives $\Gamma_2=00,01,11,10$ and $\Gamma_3=000,001,011,010,110,111,101,100$.
The problem of finding a Hamilton cycle becomes considerably harder when we restrict our attention to subgraphs of the cube induced by a sequence of consecutive levels, where the \emph{$k$th level} of $Q_n$, $0\le k\le n$, is the set of all bitstrings with exactly~$k$ many 1s in them.
One such instance is the famous \emph{middle levels problem}, raised in the 1980s by Havel~\cite{MR737021} and independently by Buck and Wiedemann~\cite{MR737262}, which asks for a Hamilton cycle in the subgraph of the $(2m+1)$-cube induced by levels~$m$ and~$m+1$.
This problem received considerable attention in the literature, and a construction of such a cycle for all~$m\geq 1$ was provided only recently by M\"utze~\cite{MR3483129}.
A much simpler construction was described subsequently by Gregor, M\"utze, and Nummenpalo~\cite{gregor-muetze-nummenpalo:18}.

There are several intuitive explanations why the middle levels problem is considerably harder than finding a Hamilton cycle in the entire cube.
First of all, the entire $(n+1)$-cube has a simple inductive structure: It consists of two copies of~$Q_n$ plus a perfect matching connecting the two copies, and hence a Hamilton cycle in~$Q_{n+1}$ can be obtained by gluing together two cycles in these copies of~$Q_n$ via two matching edges, which can yield the cycle~$\Gamma_{n+1}$ from before.
The subgraph of the $(2m+1)$-cube induced by the middle two levels, on the other hand, does not allow for such an inductive decomposition.
Note also that as a consequence of the inductive construction of the binary reflected Gray code~$\Gamma_n$, the number of times that each bit is flipped follows a highly skewed distribution~$(2,2,4,8,\ldots,2^{n-1})$.
In stark contrast to this, along \emph{any} Hamilton cycle in the middle levels graph, the number of times that each bit is flipped \emph{must} follow the uniform distribution~$(2C_m,2C_m,\ldots,2C_m)$ with $C_m=\frac{1}{2m+1}\binom{2m+1}{m}=\frac{1}{m+1}\binom{2m}{m}$ the $m$th Catalan number, an observation that makes an easy induction proof unlikely.

\subsection{Our results}

In this paper we consider the \emph{central levels problem}, a broad generalization of the middle levels problem:
Does the subgraph of the $(2m+1)$-cube induced by the middle $2\ell$ levels, i.e., by levels $m+1-\ell,\ldots,m+\ell$, have a Hamilton cycle for any $m\ge 1$ and $1\le \ell\le m+1$?
This problem was raised already in Buck and Wiedemann's paper~\cite{MR737262}, and was reiterated independently by Savage~\cite{MR1275228}, Gregor and {\v{S}}krekovski~\cite{MR2609124}, and by Shen and Williams~\cite{shen_williams_2019}.
Clearly, the case $\ell=1$ of the central levels problem is the aforementioned middle levels problem (solved in~\cite{MR3483129}).
Moreover, the case $\ell=2$ was solved affirmatively in a paper by Gregor, J\"ager, M\"utze, Sawada, and Wille~\cite{jaeger-et-al-journal:21}.
Also, the case $\ell=m+1$ is established by the binary reflected Gray code~$\Gamma_{2m+1}$.
Furthermore, the case $\ell=m$ was solved by Buck and Wiedemann~\cite{MR737262} and independently by El-Hashash and Hassan~\cite{MR1887372}, and in a more general setting by Locke and Stong~\cite{locke-stong:03}, and the case $\ell=m-1$ was settled in~\cite{MR2609124}.

The main contribution of this paper is to solve the central levels problem affirmatively in full generality; see Figure~\ref{fig:hc7}~(a)--(d).

\begin{theorem}
\label{thm:gmlc}
For any~$m\geq 1$ and $1\le \ell\le m+1$, the subgraph of the $(2m+1)$-cube induced by the middle $2\ell$~levels has a Hamilton cycle.
\end{theorem}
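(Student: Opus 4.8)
The plan is to build the Hamilton cycle inductively in $\ell$, with the two established base cases $\ell=1$ (the middle levels theorem of~\cite{MR3483129}) and $\ell=2$ (\cite{DBLP:conf/icalp/GregorJMSW18}) as the anchor, and the binary reflected Gray code $\Gamma_{2m+1}$ taking care of the top end $\ell=m+1$. The key structural observation is that the subgraph on the middle $2\ell$ levels of $Q_{2m+1}$ decomposes, by conditioning on the first bit, into two copies of the subgraph on levels $m-\ell+1,\dots,m+\ell-1$ of $Q_{2m}$ (which is a ``$2\ell-1$ consecutive levels'' graph, odd number of levels, symmetric about level $m$), plus the matching edges between them. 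So the real engine should be a simultaneous induction that produces, for every $n$ and every band of $\ell$ consecutive levels of $Q_n$, not just a Hamilton cycle but a Hamilton cycle that behaves well under this splitting — concretely, one that contains many ``vertical'' matching edges of a prescribed form so that two such cycles in the two halves can be spliced together. This is exactly the role of the symmetric chain decomposition machinery advertised in the abstract: a Hamilton cycle of $Q_n$ that respects the Greene--Kleitman SCD restricts, on each band of levels, to a structure built out of chain fragments, and the chains crossing the splitting bit give the gluing edges.

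Concretely I would proceed as follows. First, set up the symmetric chain decomposition of $Q_{2m+1}$ (Greene--Kleitman via the bracketing/parenthesis-matching rule), and record how each chain meets the middle $2\ell$ levels: a chain of length $>2\ell$ contributes a single ``interval'' of $2\ell$ consecutive vertices, shorter chains contribute shorter intervals, and the chains of length $<$ whatever are either wholly inside or wholly outside. Second, prove the strengthened inductive statement: for $n\geq 2$ there is a Hamilton cycle of $Q_n$ that contains all the Greene--Kleitman chains as subpaths — this is the Streib--Trotter-type result the abstract mentions extending — and moreover one can arrange, for each $\ell$, that the induced structure on the middle band is a single cycle. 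The third and central step is the gluing lemma: given Hamilton cycles on the middle $(2\ell-1)$-level band of $Q_{2m}$ (one ``$0$-copy'', one ``$1$-copy'') that each contain the restricted SCD, show that by swapping a controlled number of chain-interval pieces between the two copies and inserting the connecting matching edges, one obtains a single Hamilton cycle on the middle $2\ell$ levels of $Q_{2m+1}$; iterate / adapt to also handle the even-dimensional ``$\ell$ consecutive levels of $Q_n$'' cases claimed in the second sentence of the contributions. Finally, check the parity/boundary conditions at $\ell=m+1$ and $\ell=m$ so the induction meshes with the known constructions, and verify the loopless algorithm claim by making every step of the construction (bracket matching, chain traversal, splicing) computable with $O(1)$ amortized work per bit-flip.

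The main obstacle, as in the middle levels problem itself, is that no naive induction on the number of levels can work: along any Hamilton cycle in a level-symmetric band every coordinate must be flipped exactly the same number of times (the flip-count is forced to be uniform, generalizing the Catalan-number count quoted in the introduction), so the highly skewed flip distribution of $\Gamma_n$ and of generic inductive constructions is incompatible with the target. This means the gluing step cannot just reuse arbitrary cycles from the two halves — it must use cycles whose internal structure (the SCD pieces) is uniform enough that the splice preserves the balance. Getting a gluing lemma that (i) always produces a \emph{single} cycle rather than a union of several, (ii) works uniformly for all $\ell$ in the wide range $1\le\ell\le m+1$ including the delicate transition near $\ell=m+1$ where the band fills up and the two copies of the $Q_{2m}$ band overlap heavily, and (iii) is constructive/loopless, is where essentially all the difficulty lies; everything else (the SCD bookkeeping, the base cases, the parity checks) is comparatively routine once the right strengthened hypothesis and gluing operation are identified.
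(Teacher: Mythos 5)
Your proposed route is genuinely different from the paper's, and unfortunately it has both a concrete arithmetic error at its foundation and a missing central lemma that you yourself flag as ``where essentially all the difficulty lies,'' so it does not constitute a proof.

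On the arithmetic: conditioning $Q_{2m+1,\ell}$ on the first bit does \emph{not} give two copies of a $(2\ell-1)$-level band of $Q_{2m}$. Fixing the first bit to $0$ leaves levels $m+1-\ell,\ldots,m+\ell$ of $Q_{2m}$ (a $2\ell$-level band shifted up by half a level), and fixing it to $1$ leaves levels $m-\ell,\ldots,m+\ell-1$ (a $2\ell$-level band shifted down). The two are isomorphic by bit-complementation but they are different induced subgraphs, overlapping in $2\ell-1$ levels; neither is the middle $2\ell-1$ levels of $Q_{2m}$, and at $\ell=m+1$ each is all of $Q_{2m}$ rather than a proper band. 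So the split-and-glue induction does not have the claimed base structure.

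More seriously, the gluing lemma is left entirely open. You would need to show that the spliced object is a \emph{single} cycle rather than a disjoint union of several, uniformly in $\ell$, and the uniform-flip-count constraint you invoke is exactly why such ``swap chain pieces and insert matching edges'' arguments have not been made to work: the constraint is global while the swaps are local, and there is no stated invariant forcing connectivity. The paper avoids inducting on $\ell$ (or on $n$) altogether: it constructs a cycle factor in $Q_{n,\ell}$ directly from Greene--Kleitman chains plus $\{0,1,2\}$-lexical matching edges, classifies the resulting cycles into short and long ones (Lemmas~\ref{lem:short} and~\ref{lem:long}), absorbs short cycles via edge-disjoint flipping $4$-cycles (Lemma~\ref{lem:4cyc-global}), and joins the long cycles via edge-disjoint flipping $6$-cycles, reducing Hamiltonicity to the connectivity of an auxiliary graph~$\cH_{n,\ell}$ on equivalence classes of rooted trees under heavy/light rotations and pull operations. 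That connectivity argument is what actually delivers a single cycle, and nothing in your sketch plays an analogous role. Note also that the paper's loopless algorithm is for Theorem~\ref{thm:gk}, not Theorem~\ref{thm:gmlc}: the paper explicitly states at the end of Section~\ref{sec:connect} that its central-levels construction does \emph{not} yield a local algorithm, because the choice of flipping $6$-cycles depends on the global structure of the long cycles.
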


As the case $\ell=1$ of Theorem~\ref{thm:gmlc} has been proved before, the proof of Theorem~\ref{thm:gmlc} presented in this paper assumes that~$\ell\geq 2$.
Nevertheless, this proof can be seen as a generalization of the earlier proofs~\cite{MR3483129,gregor-muetze-nummenpalo:18} and~\cite{jaeger-et-al-journal:21} for the cases $\ell=1$ and $\ell=2$, respectively; see the remarks in Section~\ref{sec:comparison} below.

The most general question in this context is to ask for a Hamilton cycle in $Q_n$ that visits all vertices in \emph{any} sequence of $\ell$ consecutive levels, i.e., the levels need not be symmetric around the middle, and the dimension~$n$ needs not be odd.
Note however, that the $n$-cube is bipartite, and the partition classes are given by all even and odd levels, respectively.
Consequently, any subgraph consisting of a sequence of $\ell$ consecutive levels is also bipartite, and to circumvent the imbalances that prevent the existence of a Hamilton cycle for general~$n$ and~$\ell$, we have to slightly generalize the notion of Hamilton cycles, and there are two reasonable such generalized notions.
Firstly, a \emph{saturating cycle} in a bipartite graph is a cycle that visits all vertices in the smaller partition class.
If one partition class is empty, then the empty set is considered a saturating cycle, and if the smaller partition class has size~1, then a single edge is considered a saturating cycle.
Secondly, a \emph{tight enumeration} in a (bipartite) subgraph of the cube is a cyclic listing of all its vertices where the total number of bits flipped is exactly the number of vertices plus the difference in size between the two partition classes.
If the graph has only a single vertex, this vertex is considered a tight enumeration.
Clearly, if both partition classes have the same size, like in the central levels problem, then a saturating cycle and a tight enumeration are equal to a Hamilton cycle.
In fact, all cases of this more general problem on saturating cycles and tight enumerations, except the cases of the central levels problem, were solved affirmatively already in~\cite{MR3758308}, some of them conditional on a `yes' answer to the central levels problem.
Combining Theorem~\ref{thm:gmlc} with these previous results, we now also obtain an unconditional result for this more general question.

\begin{corollary}
\label{cor:sat-tight}
For any~$n\geq 1$ and $1\le \ell\le n+1$, the subgraph of the $n$-cube induced by any sequence of~$\ell$ consecutive levels has both a saturating cycle and a tight enumeration.
\end{corollary}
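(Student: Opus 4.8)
The plan is to reduce the general statement to the central levels problem (Theorem~\ref{thm:gmlc}) together with the results of~\cite{MR3758308}. Recall that the latter paper resolves the saturating cycle and tight enumeration question for every sequence of $\ell$ consecutive levels in $Q_n$, \emph{except} for the symmetric case of the $(2m+1)$-cube with levels $m+1-\ell,\ldots,m+\ell$, and moreover some of those cases are proved only conditionally on a positive answer to the central levels problem. Thus the task is essentially bookkeeping: verify that plugging in Theorem~\ref{thm:gmlc} discharges every conditional hypothesis and that the one remaining family of cases — the central levels themselves — is exactly what Theorem~\ref{thm:gmlc} provides.

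First I would dispose of the symmetric case directly. If $n=2m+1$ is odd and the $\ell$ consecutive levels are $m+1-\ell,\ldots,m+\ell$, then the two partition classes (even levels and odd levels) have equal size by the symmetry $\binom{n}{k}=\binom{n}{n-k}$, so a saturating cycle and a tight enumeration both coincide with a Hamilton cycle; Theorem~\ref{thm:gmlc} supplies one for all $m\ge 1$ and $1\le\ell\le m+1$. This also covers the sub-case $\ell=n+1$ via the binary reflected Gray code. Second, for every \emph{non-symmetric} choice of $n$ and of the level window, I would cite the corresponding theorem of~\cite{MR3758308}: these assert the existence of a saturating cycle and a tight enumeration outright, or else reduce to the central levels problem in a specified dimension, and in the latter situation Theorem~\ref{thm:gmlc} now closes the gap. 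Care must be taken that the reduction in~\cite{MR3758308} maps into a central-levels instance with parameters in the valid range $1\le\ell\le m+1$; this follows because a window of $\ell$ consecutive levels, once symmetrized, sits inside the middle $2\ell$ levels of an appropriate odd cube.

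The main (and essentially only) obstacle is ensuring that the case analysis of~\cite{MR3758308} is genuinely exhaustive once combined with Theorem~\ref{thm:gmlc}: one must check that no sequence of consecutive levels escapes both the unconditional cases of~\cite{MR3758308} and the central-levels family. A convenient way to organize this is by the parity of $n$ and the position of the window relative to the middle, distinguishing (i) windows entirely on one side of the middle, (ii) windows straddling the middle asymmetrically, and (iii) windows symmetric about the middle (the central levels). Cases (i) and (ii) yield a strictly imbalanced bipartition and are handled by the unconditional or now-unconditional results of~\cite{MR3758308}; case (iii) is Theorem~\ref{thm:gmlc}. Since the excerpt already asserts that all cases except the central levels were settled in~\cite{MR3758308}, the corollary follows immediately by combining that statement with Theorem~\ref{thm:gmlc}.
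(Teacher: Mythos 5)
Your proposal takes essentially the same route as the paper: the paper proves Corollary~\ref{cor:sat-tight} simply by combining Theorem~\ref{thm:gmlc} with the results of~\cite{MR3758308}, which settled all non-central cases (some conditionally on a positive answer to the central levels problem) and which your argument invokes in the same way. The extra case bookkeeping you sketch is reasonable but not strictly needed, since the paper already asserts the exhaustiveness of the case split in~\cite{MR3758308}.
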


\begin{proof}[Proof of Corollary~\ref{cor:sat-tight}]
If $\ell=1$, then all vertices are in the same partition class, and the other partition class is empty.
It follows that the statement is trivially true for $\ell=1$ and saturating cycles.
To prove it for $\ell=1$ and tight enumerations, first note that level~$0$ or level~$n$ consist only of a single vertex, which is trivially a tight enumeration.
Otherwise we use a well-known result of Tang and Liu~\cite{MR0349274}, who showed that for any $n\geq 2$ and $1\leq k\leq n-1$, there is a cyclic listing of all bitstrings of~$Q_n$ on level~$k$ such that any two consecutive strings differ in a transposition of~0 and~1.
As two bits are flipped in each step, this is a tight enumeration.
We now consider the case $\ell=2$ for saturating cycles:
If one of the two levels is level~$0$ or level~$n$, which contains only a single vertex, then a single edge is a trivial saturating cycle.
Otherwise the result follows from \cite[Theorem~9]{MR3759914}.

All remaining cases for saturating cycles and tight enumerations are covered by Theorems~5 and~6 proved in~\cite{MR3758308}, respectively.
Specifically, part~(iv) in both of these theorems is conditional on the validity of Theorem~\ref{thm:gmlc}, which we establish in this paper.
\end{proof}

The central levels problem studied in this paper is also closely related to another famous problem, which asks about Hamilton cycles in so-called bipartite Kneser graphs.
The \emph{bipartite Kneser graph~$H_{n,k}$}, defined for any integers $k\ge 0$ and $n\ge 2k+1$, is the bipartite graph whose vertex partition is given by all vertices on level~$k$ and~$n-k$ of~$Q_n$, with an edge between any two bitstrings that differ in exactly $n-2k$ bits.
That is, the edges in~$H_{n,k}$ correspond to a level-monotone path in~$Q_n$ between a vertex on level~$k$ and a vertex on level~$n-k$.
It was shown that $H_{n,k}$ has a Hamilton cycle for all~$k\geq 1$ and $n\geq 2k+1$ in~\cite{MR3759914}, completing a long line of previous partial results~\cite{MR1152123,MR1271867,MR1778200,MR1999733}.

\begin{figure}
\includegraphics{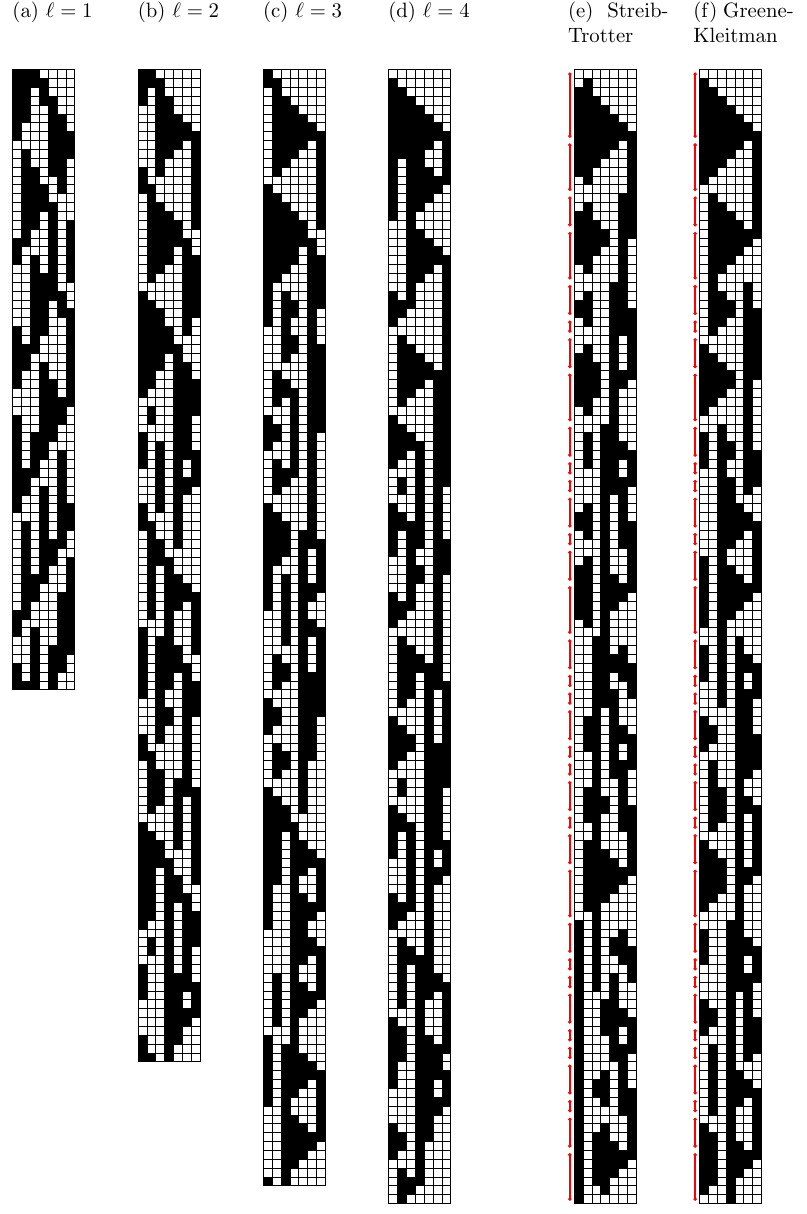}
\caption{(a)-(d)~The Hamilton cycles in $Q_{7,\ell}$ for $\ell=1,2,3,4$ constructed as in our proof of Theorem~\ref{thm:gmlc}.
(e)~The Hamilton cycle in $Q_7$ containing an SCD obtained from the Streib-Trotter construction, with symmetric chains highlighted on the side.
(f)~The Hamilton cycle in $Q_7$ containing the Greene-Kleitman SCD obtained from our proof of Theorem~\ref{thm:gk}.
In this figure, 1-bits are drawn as black squares, 0-bits as white squares.
}
\label{fig:hc7}
\end{figure}

An essential tool in our proof of Theorem~\ref{thm:gmlc} are symmetric chain decompositions.
This is a well-known concept from the theory of posets, which we now define specifically for the $n$-cube using graph-theoretic language.
A \emph{symmetric chain} in~$Q_n$ is a path $(x_k,x_{k+1},\ldots,x_{n-k})$ in the $n$-cube where $x_i$ is from level~$i$ for all $k\le i\le n-k$, and a \emph{symmetric chain decomposition}, or SCD for short, is a partition of the vertices of~$Q_n$ into symmetric chains.
It is well-known that the $n$-cube has an SCD for all $n\geq 1$, and the simplest explicit construction was given by Greene and Kleitman~\cite{MR0389608} (see Section~\ref{sec:scd} below).
Streib and Trotter~\cite{MR3268651} first investigated the interplay between SCDs and Hamilton cycles in the $n$-cube, and they described an SCD in~$Q_n$ that can be extended to a Hamilton cycle; see Figure~\ref{fig:hc7}~(e).
Streib and Trotter's SCD, however, is different from the aforementioned Greene-Kleitman SCD.
In this paper, we extend Streib and Trotter's result as follows; see Figure~\ref{fig:hc7}~(f).

\begin{theorem}
\label{thm:gk}
For any~$n\geq 2$, the Greene-Kleitman SCD can be extended to a Hamilton cycle in~$Q_n$.
\end{theorem}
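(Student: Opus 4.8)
The plan is to prove Theorem~\ref{thm:gk} by induction on~$n$, following and extending the recursive strategy of Streib and Trotter~\cite{MR3268651}. The starting point is the familiar recursive doubling of the Greene--Kleitman SCD: viewing $Q_{n+1}$ as the disjoint union of a \emph{bottom} copy $0Q_n$ and a \emph{top} copy $1Q_n$, each chain $C=(c_k,\dots,c_{n-k})$ of the Greene--Kleitman SCD $\mathcal C_n$ of $Q_n$ produces in $\mathcal C_{n+1}$ a \emph{long} chain $C^{+}=(0c_k,\dots,0c_{n-k},1c_{n-k})$ together with a \emph{short} chain $C^{-}=(1c_k,\dots,1c_{n-k-1})$, the latter being empty precisely when $C$ is a single vertex on level $n/2$ (so $n$ is even). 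Extending an SCD to a Hamilton cycle amounts to choosing a set $M$ of edges of $Q_n$ pairing up the chain endpoints (a single-vertex chain contributing its vertex as two coinciding endpoints) so that $\mathcal C_n\cup M$ is connected, hence a Hamilton cycle; so, given by induction a Hamilton cycle $H_n=\mathcal C_n\cup M_n$, I must build an analogous $M_{n+1}$ for $\mathcal C_{n+1}$.

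First I would note that for a single chain $C$, the two new chains $C^{+}$ and $C^{-}$, together with the vertical edge $0c_k{-}1c_k$ joining their bottom endpoints and the edge $1c_{n-k-1}{-}1c_{n-k}$ joining their top endpoints, form a Hamilton cycle of the prism over~$C$. Hence the union over all $C\in\mathcal C_n$ of $C^{+}\cup C^{-}$ and these two edges already covers $V(Q_{n+1})$ as $|\mathcal C_n|$ pairwise disjoint cycles, one per chain. The second step is to merge them into a single Hamilton cycle using $H_n$: a connecting edge $e=\{x,y\}\in M_n$ joins an endpoint $x$ of some chain $C$ to an endpoint $y$ of some chain $D$, and deleting the two vertical edges $0x{-}1x$ and $0y{-}1y$ (which lie on the prism cycles of $C$ and~$D$) while inserting the two lifted edges $0x{-}0y$ and $1x{-}1y$ merges those two cycles. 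Carrying out this swap at all connecting edges of $M_n$ except one suitably chosen edge --- so that the contracted picture, which is a single cycle on the chains because $H_n$ is a Hamilton cycle, becomes a spanning tree --- yields a single Hamilton cycle $H_{n+1}\supseteq\mathcal C_{n+1}$ whose connecting set $M_{n+1}$ is, up to the surviving vertical edges, two lifted copies of $M_n$.

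The main obstacle is the behaviour of the single-vertex chains of $\mathcal C_n$ at level $n/2$ when $n$ is even: for such $C=(c)$ the prism cycle degenerates to the single edge $C^{+}=\{0c{-}1c\}$, which must lie in $H_{n+1}$ and hence cannot be deleted by a swap. Near such a vertex the merging must be \emph{twisted}: of the two lifted edges $0x{-}0c$ and $1x{-}1c$ attached to a connecting edge $\{x,c\}\in M_n$ only one may be used, and the choices at the two connecting edges incident with $c$ must be placed on opposite copies so that both $0c$ and $1c$ acquire degree two. Organising this bookkeeping --- which vertical edges survive, where the omitted swap sits, and how all of it interacts with the single-vertex chains that appear and disappear as the parity of the dimension alternates --- forces a strengthened induction hypothesis that carries along with $H_n$ a designated free connecting edge and enough information about $M_n$ to predict, for every swap, the endpoints and levels involved. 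With such an invariant the inductive step reduces to a finite case analysis over the types of connecting edges (bottom--bottom, top--top, and the singleton cases); a small base case such as $n=2$ is verified directly; and making every choice in the recursion explicit yields the loopless algorithm for the corresponding Gray code. I expect the verification of the singleton cases, and pinning down an invariant robust under the parity change, to be the most delicate part.
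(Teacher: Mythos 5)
Your prism-and-swap plan has a structural gap at the top--top connecting edges. Write the append-convention chain $C=(c_k,\dots,c_{n-k})$ in $Q_n$ as in your decomposition, so that $C^{+}=(0c_k,\dots,0c_{n-k},1c_{n-k})$. The ``vertical'' edge $0t(C)\!-\!1t(C)=0c_{n-k}\!-\!1c_{n-k}$ is then precisely the \emph{top chain edge of} $C^{+}$; it is not one of the two connecting edges of the prism cycle. So whenever a connecting edge $\{x,y\}\in M_n$ joins two top endpoints $x=t(C)$, $y=t(D)$ (which happens for roughly half of $M_n$, since the connections must alternate top/bottom around the cycle, as the paper notes in Lemma~\ref{lem:dir}), your swap deletes a chain edge of $C^{+}$ and a chain edge of $D^{+}$, and the resulting cycle no longer traverses those chains consecutively --- it no longer extends the Greene--Kleitman SCD. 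The degeneracy you flag for singleton chains is just the extreme case of this phenomenon; the problem already arises for every top--top swap, and the ``twisted merging'' fix you sketch for singletons does not repair it.

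One could try instead to swap at the \emph{horizontal} top prism edges $1c_{n-k-1}\!-\!1c_{n-k}$ and $1d_{n-k'-1}\!-\!1d_{n-k'}$, but the fourth side $1c_{n-k-1}\!-\!1d_{n-k'-1}$ of the resulting quadrilateral is an edge of $Q_{n+1}$ only under an alignment condition on the positions of the last two $*$s of $C$ and $D$, which is not automatic and would have to be carried as an inductive invariant across parity changes. This is essentially where the paper diverges: it works with an $n\rightarrow n+2$ recursion and an explicit ordering $\Lambda_n$ of chains built from the $f,\ell$ operations, and the needed alignment is exactly what properties P and P$'$ encode (consecutive top-joined chains satisfy $C=f(D)$ or $f(C)=D$, resp.\ $C=\ell(D)$ or $\ell(C)=D$); Lemmas~\ref{lem:conn-top0} and~\ref{lem:conn-top1} then deliver the missing quadrilateral side two levels up. So your high-level picture (recursive doubling, a spanning tree of merges, a strengthened invariant, a loopless implementation) points in the right direction, but the vertical-swap step fails, and the replacement requires a structural hypothesis on $M_n$ of essentially the kind the paper formalizes --- at which point the $n\rightarrow n+2$ chain-ordering formulation is the cleaner route.
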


The Greene-Kleitman SCD has found a large number of applications in the literature, e.g., to construct rotation-symmetric Venn diagrams \cite{MR2034416,MR2268388}, to solve different variants of the Littlewood-Offord problem~\cite{MR184865,MR1183998} (see also~\cite[Chap.~4]{MR866142}), or to learn monotone Boolean functions \cite[Sec.~7.2.1.6]{MR3444818} (see also \cite{MR0043115,MR0319772,MR0450151,MR532807,MR1765729}).
Knowing that this SCD extends to a Hamilton cycle and that it is a crucial ingredient for solving the general central levels problem adds to this list of interesting properties and applications.
Observe also that a Hamilton cycle that extends an SCD has the intriguing property that it minimizes the number of changes of direction from moving up to moving down, or vice versa, between consecutive levels in the cube.
For comparison, the monotone paths constructed by Savage and Winkler~\cite{MR1329390} maximize these changes.

Motivated by these results and by the aforementioned conjecture of Ruskey and Savage~\cite{MR1201997} that every matching in $Q_n$ extends to a Hamilton cycle, we raise the following brave conjecture.

\begin{conjecture}
\label{con:ext}
Every SCD can be extended to a Hamilton cycle in~$Q_n$.
\end{conjecture}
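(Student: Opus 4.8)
The plan for attacking Conjecture~\ref{con:ext} is to recast it as a Hamiltonicity problem ``one level up''. Fix an SCD $\mathcal{D}=\{C_1,\dots,C_t\}$ of $Q_n$ with $t=\binom{n}{\lfloor n/2\rfloor}$, and call a vertex a \emph{port} if it is an endpoint of its chain (a chain of length $\ge 2$ has two distinct ports; a one-vertex chain is a single port that must be used twice). Every internal vertex of a chain already has degree two in $\mathcal{D}$, so a cycle containing $\mathcal{D}$ arises precisely by adding a set $M$ of \emph{connectors} --- edges of $Q_n$ not in $\mathcal{D}$, each joining two ports of \emph{distinct} chains --- that covers every port the appropriate number of times; the result is automatically a $2$-regular spanning subgraph, i.e.\ a disjoint union of cycles through $\mathcal{D}$. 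Contracting each chain to a vertex, $M$ induces a $2$-regular multigraph $H$ on the $t$ chains, and $\mathcal{D}\cup M$ is a Hamilton cycle iff $H$ is a single $t$-cycle. Splitting every chain of length $\ge 2$ into a ``top'' and a ``bottom'' vertex joined by a \emph{traverse edge} and distributing the connectors at a chain according to which port they use, this becomes: the auxiliary graph $\mathcal{A}=\mathcal{A}(Q_n,\mathcal{D})$, whose non-traverse edges are exactly the connectors, has a Hamilton cycle through the prescribed perfect matching of traverse edges --- a Ruskey--Savage-type statement for the family $\mathcal{A}(Q_n,\mathcal{D})$. (It does \emph{not} follow from the Ruskey--Savage conjecture itself, since $\mathcal{D}$ is a union of paths, not a matching.)

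First I would settle the ``easy half'': that \emph{some} connector set $M$ makes $\mathcal{D}\cup M$ a $2$-factor. Since a connector joins ports on consecutive levels, this is a level-by-level bipartite matching question --- chain-bottoms on level $k$ must be matched within $Q_n$ to chain-ports on levels $k-1$ or $k+1$, the only ``mixed'' case (a bottom matched to a top) occurring around the middle. I expect feasibility to follow from Hall's theorem via the vertex-expansion between consecutive levels of $Q_n$, uniformly over all SCDs; this already produces, for every SCD, a $2$-factor $F\supseteq\mathcal{D}$, and the whole content of the conjecture then lies in reducing the number of cycles of $F$ to one.

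The main step is the merging, using the re-routing technique familiar from the middle-levels proofs: given two cycles $D,D'$ of $F$, find connectors $e\in D$, $e'\in D'$ and a ``switch'' --- a replacement pair of $Q_n$-edges between the ports involved --- that splices $D\cup D'$ into one cycle still containing $\mathcal{D}$, the delicate point being that one may delete and create connectors only, never chain edges, and the new connectors must again join ports on the correct levels. Equivalently one wants a robust Hamiltonicity criterion for $\mathcal{A}$ relative to its traverse matching: a P\'osa-type rotation argument, or a Chv\'atal--Erd\H{o}s-style bound giving a Hamilton cycle through the matching once $\mathcal{A}$ is sufficiently connected compared with the independence number of its connector edges.

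The real obstacle is that an arbitrary SCD gives essentially no control over which chains are mutually adjacent in $Q_n$, so neither the pairing scheme of Streib--Trotter nor the explicit routing used here for the Greene--Kleitman SCD (Theorem~\ref{thm:gk}) transfers directly, and the $2$-factor produced by Hall's theorem could a priori split into many cycles with too few available switches between them. Two escape routes seem worth pursuing: (i) show that the space of all SCDs of $Q_n$ is connected under local ``chain-swap'' moves and that extendability to a Hamilton cycle is invariant under such a move, so that Theorem~\ref{thm:gk} bootstraps to every SCD; or (ii) prove a merging lemma directly --- any two same-size chains can be joined into a cycle, and the SCD obtained by treating that cycle as a new ``super-vertex'' still satisfies the hypotheses --- enabling an induction on $t$. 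As a first step, and to supply base cases for either scheme, I would verify the conjecture by computer for small~$n$ (say $n\le 7$, matching Figure~\ref{fig:hc7}).
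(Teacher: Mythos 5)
This statement is an open \emph{conjecture} in the paper, not a theorem: the authors offer no proof and explicitly describe it as ``brave''. So there is no proof in the paper to compare your write-up against. What you have submitted is, by your own framing, a research plan rather than a proof, and it should be evaluated as such.

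Your reduction is sensible and essentially the right framing: contract each chain to a top/bottom pair joined by a traverse edge, observe that extending~$\mathcal{D}$ to a Hamilton cycle means finding a set of ``connector'' edges between ports of distinct chains such that the resulting $2$-regular subgraph is a single cycle, and note that this is a Hamiltonicity-through-a-prescribed-matching question for the auxiliary graph~$\mathcal{A}$. This is also how the paper treats the Greene--Kleitman case in Section~\ref{sec:gk}: the cycle ordering~$\Lambda_n$ prescribes precisely such a set of connectors, alternating top/bottom, and Lemma~\ref{lem:dir} is the observation that once the matching is $2$-regular and alternating, the traversal direction is forced. So your reformulation is aligned with the paper's viewpoint.

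However, there are two unresolved gaps, and you are candid about the second but underestimate the first. First, your ``easy half'' --- that \emph{every} SCD extends to a $2$-factor --- is asserted to follow from Hall's theorem via level-expansion, but this is not verified and is not obviously true. The set of ports of an arbitrary SCD is a specific, adversarially chosen subset of each level, and a connector at a top port of a length-$h$ chain on level $(n+h)/2$ can only go to a top of a length-$(h+2)$ or length-$(h-2)$ chain (with a few extra options near the middle). Because a port may be matched either up or down, this is not a clean bipartite matching instance but a degree-constrained subgraph problem whose feasibility depends on the structure of the particular SCD; the cube's vertex expansion between full levels does not directly control expansion of the subsets of ports. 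You would at minimum need an explicit Hall-type or defect-version argument for this restricted setting. Second, and more fundamentally, the merging step --- reducing the number of cycles of the $2$-factor to one via local switches --- is exactly where the paper's proof of Theorem~\ref{thm:gk} relies on the explicit recursive structure of the Greene--Kleitman SCD (via Lemmas~\ref{lem:conn-desc}--\ref{lem:conn-1} and the inductive ordering~$\Lambda_n$), and you acknowledge having no replacement for that structure in the general case. Your escape routes (i) (connectivity of the SCD space under chain-swaps with invariance of extendability) and (ii) (a direct merging lemma enabling induction on the number of chains) are both plausible directions but are stated as wishes rather than established claims.

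In short: your high-level reduction is correct and matches the spirit in which the paper proves Theorem~\ref{thm:gk} for the Greene--Kleitman SCD, but Conjecture~\ref{con:ext} remains open and nothing in your proposal closes the gap. If you want to make partial progress, pinning down the $2$-factor existence question for arbitrary SCDs would already be a nontrivial and publishable first step, and small-$n$ computer verification as you suggest is a sensible sanity check.
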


Although every SCD of~$Q_n$ is the union of two matchings, there are matchings in~$Q_n$ that do not extend to an SCD; take for example the two edges obtained by starting at the vertices~$0^n$ and~$1^n$ and flipping the same bit.
Consequently, an affirmative answer to Conjecture~\ref{con:ext} would cover only some cases of the Ruskey-Savage conjecture.

\subsection{Efficient algorithms}

We now discuss the algorithmic problem of efficiently computing the Hamilton cycles constructed in the proofs of Theorems~\ref{thm:gmlc} and~\ref{thm:gk}.
The ultimate goal for any Gray code problem is a \emph{loopless} algorithm, a notion that was introduced by Ehrlich~\cite{MR0366085}.
The running time of such an algorithm is~$\cO(1)$ per generated vertex, and the initialization time is `reasonable', linear in~$n$, say.
Furthermore, the overall memory requirement should also be polynomial in~$n$, ideally linear, and this \emph{excludes} the space for storing the Hamilton cycle (cf.\ Ruskey's~\cite{ruskey_2003} `Don't count the output principle'), which may not be needed for some applications.

For the Hamilton cycle in Theorem~\ref{thm:gk}, we present such a loopless algorithm.
Specifically, the algorithm uses $\cO(1)$ time in every iteration to compute the next bitstring along the Hamilton cycle, its initialization time is~$\cO(n)$, and its space requirement is~$\cO(n)$.
In this paper, we provide a pseudocode description of this algorithm, and we implemented it in~C++, available for download and for demonstration on the Combinatorial Object Server~\cite{cos_chains}.

On the other hand, for the Hamilton cycles constructed in the proof of Theorem~\ref{thm:gmlc}, we have no efficient generation algorithm, and it remains a challenging open problem to find one.
While our proof of Theorem~\ref{thm:gmlc} is constructive and translates straightforwardly into an algorithm that computes the desired Hamilton cycle in time and space that are polynomial in the size of the graph (the middle $2\ell$ levels of $Q_n$, $n:=2m+1$), these quantities are clearly exponential in~$n$.
There are fundamental obstacles that prevent us from obtaining algorithms with polynomial bounds from our general proof of the central levels problem, as explained below (at the end of Section~\ref{sec:connect}).
The only two special cases of the central levels problem for which loopless algorithms are known, which were found prior to our work, are the binary reflected Gray code~$\Gamma_n$~\cite{MR0424386} and the middle levels problem~\cite{MR3627876}, i.e., the extreme cases $\ell=m+1$ and $\ell=1$, respectively.

\subsection{Proof ideas}

We first describe the ideas for proving Theorem~\ref{thm:gmlc}.
For any $m\geq 1$ we define $n:=2m+1$, and for $1\le \ell\le m+1$ we let $Q_{n,\ell}$ denote the subgraph of~$Q_n$ induced by the middle $2\ell$ levels.
To prove that $Q_{n,\ell}$ has a Hamilton cycle for general~$m$ and~$\ell$, we combine and generalize the tools and techniques developed for the cases $\ell=1$ and $\ell=2$ in~\cite{gregor-muetze-nummenpalo:18} and~\cite{jaeger-et-al-journal:21}, respectively.
Our proof proceeds in two steps: In a first step, we construct a \emph{cycle factor} in $Q_{n,\ell}$, i.e., a collection of disjoint cycles which together visit all vertices of $Q_{n,\ell}$.
In a second step, we use local modifications to join the cycles in the factor to a single Hamilton cycle.
Essentially, this technique reduces the Hamiltonicity problem in~$Q_{n,\ell}$ to proving that a suitably defined auxiliary graph is connected, which is much easier.

In fact, the predecessor paper~\cite{jaeger-et-al-journal:21} already proved the existence of a cycle factor in~$Q_{n,\ell}$, but this construction does not seem to yield a factor that would be amenable to analysis.
In this paper, we therefore construct another cycle factor in~$Q_{n,\ell}$, based on modifying the aforementioned Greene-Kleitman SCD of~$Q_n$ by the lexical matchings introduced by Kierstead and Trotter~\cite{MR962224}.
The resulting cycle factor in~$Q_{n,\ell}$ has a rich structure, in particular the number of cycles and their lengths can be described combinatorially.

The simplest way to join two cycles~$C$ and~$C'$ from this factor to a single cycle is to consider a 4-cycle~$F$ that shares exactly one edge with each of the cycles~$C$ and~$C'$ (the other two edges of~$F$ must then go between~$C$ and~$C'$), and to take the symmetric difference of the edge sets of~$C\cup C'$ and of~$F$, yielding a single cycle $(C\cup C')\bigtriangleup F$ on the same vertex set as~$C\cup C'$.
We refer to such a cycle~$F$ as a \emph{flipping $4$-cycle}.
For example, if we interpret the binary reflected Gray code~$\Gamma_n$ as a cycle in~$Q_n$, we see that $\Gamma_{n+1}=(0\Gamma_n \cup 1\Gamma_n^R)\bigtriangleup F$ where $F$ is the 4-cycle $F=0^{n+1},010^{n-1},110^{n-1},10^n$.
In addition to flipping 4-cycles, we also use flipping 6-cycles, which intersect with the two cycles to be joined in a slightly more complicated way, albeit with the same effect of joining them to a single cycle.
The most technical aspect of this part of the proof is to ensure that all flipping cycles used are edge-disjoint, so that the joining operations do not interfere with each other.

To prove Theorem~\ref{thm:gk}, we proceed by induction from dimension~$n$ to~$n+2$, treating the cases of even and odd~$n$ separately.
We first specify a particular ordering of all chains of the Greene-Kleitman SCD, and then show that this ordering admits a matching that alternatingly joins the bottom or top vertices of any two consecutive chains in our ordering.
In fact, there is a close relation between our proofs of Theorem~\ref{thm:gmlc} and~\ref{thm:gk}:
The aforementioned construction of a cycle factor in~$Q_{n,\ell}$ is particularly nice for $\ell=m+1$, i.e., for the case where we consider the entire cube.
Specifically, in this case our cycle factor contains \emph{all} chains from the Greene-Kleitman SCD.
These cycles can be joined to a single Hamilton cycle in such a way, so as to give exactly the aforementioned Hamilton cycle constructed for proving Theorem~\ref{thm:gk}.

\subsection{Outline of this paper}

In Section~\ref{sec:prel} we build up the necessary preliminaries.
Specifically, we introduce various Catalan bijections, the Greene-Kleitman SCD, and lexical matchings.
In Section~\ref{sec:cfac} we describe our construction of a cycle factor in $Q_{n,\ell}$, and we analyze its structure in Section~\ref{sec:struc}, identifying two types of cycles, called short and long cycles.
In Section~\ref{sec:flip4} we describe the flipping $4$-cycles that we use for attaching the short cycles to the long cycles.
In Section~\ref{sec:flip6} we describe the flipping $6$-cycles that we use for joining the long cycles to a Hamilton cycle in~$Q_{n,\ell}$.
In Section~\ref{sec:connect} we put together all ingredients for proving Theorem~\ref{thm:gmlc}, reducing the Hamiltonicity problem to a connectivity problem in a suitably defined auxiliary graph.
In Section~\ref{sec:gk} we present our proof of Theorem~\ref{thm:gk} and the corresponding loopless algorithm.
This part can be read independently of the previous parts, though it is closely related.

\section{Preliminaries}
\label{sec:prel}

We begin by introducing some terminology that is used throughout the following sections.

\subsection{Bitstrings, lattice paths, and rooted trees}
\label{sec:dyck}

For any string~$x$ and any integer~$k\geq 0$, we let $x^k$ denote the concatenation of~$k$ copies of~$x$.
We often interpret a bitstring~$x$ as a path in the integer lattice~$\mathbb{Z}^2$ starting at the origin~$(0,0)$, where every 0-bit is interpreted as a $\diagdown$-step that changes the current coordinate by~$(+1,-1)$ and every 1-bit is interpreted as an $\diagup$-step that changes the current coordinate by~$(+1,+1)$; see Figure~\ref{fig:tree}.

\begin{figure}[h!]
\begin{tikzpicture}[scale=0.5]

\draw[ptr] (-0.5,0) -- (15,0);
\draw[ptr] (0,0.5) -- (0,-4);

\node[node_black] (0) at (0,0) {};
\node[node_black] (1) at (1,-1) {};
\node[node_black] (2) at (2,-2) {};
\node[node_black] (3) at (3,-3) {};
\node[node_black] (4) at (4,-2) {};
\node[node_black] (5) at (5,-1) {};
\node[node_black] (6) at (6,-2) {};
\node[node_black] (7) at (7,-1) {};
\node[node_black] (8) at (8,-0) {};
\node[node_black] (9) at (9,-1) {};
\node[node_black] (10) at (10,0) {};
\node[node_black] (11) at (11,-1) {};
\node[node_black] (12) at (12,-2) {};
\node[node_black] (13) at (13,-1) {};
\node[node_black] (14) at (14,0) {};

\path[line_solid] (0) -- (1) -- (2) -- (3) -- (4) -- (5) -- (6) -- (7) -- (8) -- (9) -- (10) -- (11) -- (12) -- (13) -- (14);

\node[node_black] (Z) at (15.5,-3.5) {};
\node[node_black] (A) at (15.5,-2.5) {} edge[thick] (Z);
\node[node_black] (B) at (17.5,-2.5) {};
\node[node_black] (C) at (16.5,-1.5) {} edge[thick] (A) edge[thick] (B);
\node[node_black] (D) at (19.5,-1.5) {};
\node[node_black] (E) at (22.5,-2.5) {};
\node[node_black] (F) at (22.5,-1.5) {} edge[thick] (E);
\node[node_black] at (19.5,-0.5) {} edge[thick] (C) edge[thick] (D) edge[thick] (F);

\node at (-5.5,-2) {$x=00011011010011\in D_{14}$};

\end{tikzpicture} 
\caption{The correspondence between bitstrings (left), lattice paths (middle) and rooted trees (right).}
\label{fig:tree}
\end{figure}
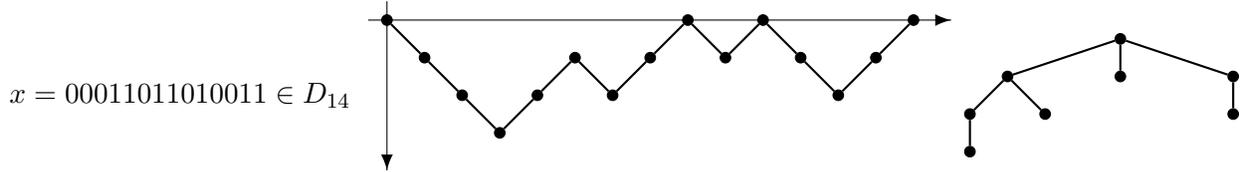

Let $D_{2k}$ denote the set of bitstrings with exactly $k$ many 1s and $k$ many 0s, such that in every prefix, the number of~0s is at least as large as the number of 1s.
We also define $D:=\bigcup_{k\geq 0} D_{2k}$.
Note that $D_0=\{\varepsilon\}$, where $\varepsilon$ denotes the empty bitstring.
In terms of lattice paths, $D$ corresponds to so-called \emph{Dyck paths} that never move above the line~$y=0$ and end on this line.
If a lattice path $x$ contains a substring $u\in D$, then we refer to this substring~$u$ as a \emph{valley} in~$x$.
Any nonempty bitstring~$x\in D$ can be written uniquely as $x=0\,u\,1\,v$ and as $x=u'\,0\,v'\,1$ with~$u,v,u',v'\in D$.
We refer to this as the \emph{left} and \emph{right factorization} of~$x$, respectively.
The set~$D'_{2k}$ is defined similarly as~$D_{2k}$, but we require that in exactly one prefix, the number of~0s is strictly smaller than the number of~1s.
That is, the lattice paths corresponding to the bitstrings in $D'_{2k}$ move above the line $y=0$ exactly once.

\begin{wrapfigure}{r}{0.45\textwidth}
\includegraphics{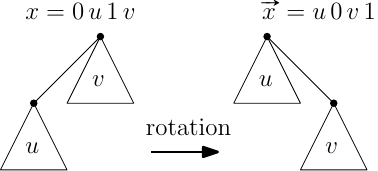}
\caption{Definition of tree rotation.}
\label{fig:rot}
\end{wrapfigure}
An \emph{(ordered) rooted tree} is a tree with a distinguished root vertex, and the children of each vertex have a specified left-to-right ordering.
We think of a rooted tree as a tree embedded in the plane with the root on top, with downward edges leading from any vertex to its children, and the children appear in the specified left-to-right ordering.
Using a standard Catalan bijection (see~\cite{MR3467982}), every Dyck path~$x\in D_{2k}$ can be interpreted as a rooted tree with $k$~edges; see Figure~\ref{fig:tree}.
Given a rooted tree~$x\in D$, we may rotate the tree to the right, yielding the tree~$\rot{x}$; see Figure~\ref{fig:rot}.
More precisely, $\rot{x}$ is obtained from~$x$ by taking the leftmost child of the root of~$x$ as the new root.
In terms of bitstrings, if $x$ has the left factorization $x=0\,u\,1\,v$ with $u,v\in D$, then $\rot{x}$ has the right factorization $\rot{x}=u\,0\,v\,1$.
Note that we have $\rot{D}=D$.

\subsection{The Greene-Kleitman SCD}
\label{sec:scd}

We now describe Greene and Kleitman's~\cite{MR0389608} construction of an SCD in the $n$-cube; see Figure~\ref{fig:paren}.
For any vertex~$x$ of the $n$-cube, we interpret the 0s in~$x$ as opening brackets and the 1s as closing brackets.
By matching closest pairs of opening and closing brackets in the natural way, the chain containing~$x$ is obtained by flipping the leftmost unmatched~0 to ascend the chain, or the rightmost unmatched~1 to descend the chain, until no more unmatched bits can be flipped.
It is easy to see that this indeed yields an SCD of the $n$-cube for any~$n\geq 1$.
In the rest of the paper, we always work with this SCD due to Greene and Kleitman, and whenever referring to a chain, we mean a chain from this decomposition.

\begin{figure}[h!]
\begin{tikzpicture}[every node/.style={gray, inner sep=1pt}]
 \pgfsetmatrixcolumnsep{0pt}
 \pgfsetmatrixrowsep{0pt}
 \matrix (m) [matrix of nodes] {
  1&1&1&1&1&0&1&1&0&1&1&0&1&0&0&1&1&1&1&|[black]|1&0&1 \\
  1&1&1&1&1&0&1&1&0&1&1&0&1&0&0&1&1&1&|[black]|1&|[black]|0&0&1 \\
  1&1&1&1&1&0&1&1&0&1&1&0&1&0&0&1&1&|[black]|1&|[black]|0&0&0&1 \\
  1&1&1&1&1&0&1&1&0&1&|[black]|1&0&1&0&0&1&1&|[black]|0&0&0&0&1 \\
  1&1&1&1&1&0&1&|[black]|1&0&1&|[black]|0&0&1&0&0&1&1&0&0&0&0&1 \\
  1&1&1&1&|[black]|1&0&1&|[black]|0&0&1&0&0&1&0&0&1&1&0&0&0&0&1 \\
  1&1&1&|[black]|1&|[black]|0&0&1&0&0&1&0&0&1&0&0&1&1&0&0&0&0&1 \\[5pt]
  |[black]|1&|[black]|1&|[black]|1&|[black]|0&|[black]|0&|[black]|0&|[black]|1&|[black]|0&|[black]|0&|[black]|1&|[black]|0&|[black]|0&|[black]|1&|[black]|0&|[black]|0&|[black]|1&|[black]|1&|[black]|0&|[black]|0&|[black]|0&|[black]|0&|[black]|1 \\[-1pt]
  |[black]|\footnotesize\texttt)&|[black]|\footnotesize\texttt)&|[black]|\footnotesize\texttt)&|[black]|\footnotesize\texttt(&|[black]|\footnotesize\texttt(&|[black]|\footnotesize\texttt(&|[black]|\footnotesize\texttt)&|[black]|\footnotesize\texttt(&|[black]|\footnotesize\texttt(&|[black]|\footnotesize\texttt)&|[black]|\footnotesize\texttt(&|[black]|\footnotesize\texttt(&|[black]|\footnotesize\texttt)&|[black]|\footnotesize\texttt(&|[black]|\footnotesize\texttt(&|[black]|\footnotesize\texttt)&|[black]|\footnotesize\texttt)&|[black]|\footnotesize\texttt(&|[black]|\footnotesize\texttt(&|[black]|\footnotesize\texttt(&|[black]|\footnotesize\texttt(&|[black]|\footnotesize\texttt) \\[13pt]
  1&|[black]|1&|[black]|0&0&0&0&1&0&0&1&0&0&1&0&0&1&1&0&0&0&0&1 \\
  |[black]|1&|[black]|0&0&0&0&0&1&0&0&1&0&0&1&0&0&1&1&0&0&0&0&1 \\
  |[black]|0&0&0&0&0&0&1&0&0&1&0&0&1&0&0&1&1&0&0&0&0&1 \\
 };
 \draw ($(m-9-6.south)+(0,0.1)$) -- ($(m-9-6.south)-(0,0.15)$) -- ($(m-9-7.south)-(0,0.15)$) -- ($(m-9-7.south)+(0,0.1)$);
 \draw ($(m-9-9.south)+(0,0.1)$) -- ($(m-9-9.south)-(0,0.15)$) -- ($(m-9-10.south)-(0,0.15)$) -- ($(m-9-10.south)+(0,0.1)$);
 \draw ($(m-9-12.south)+(0,0.1)$) -- ($(m-9-12.south)-(0,0.15)$) -- ($(m-9-13.south)-(0,0.15)$) -- ($(m-9-13.south)+(0,0.1)$);
 \draw ($(m-9-15.south)+(0,0.1)$) -- ($(m-9-15.south)-(0,0.15)$) -- ($(m-9-16.south)-(0,0.15)$) -- ($(m-9-16.south)+(0,0.1)$);
 \draw ($(m-9-14.south)+(0,0.1)$) -- ($(m-9-14.south)-(0,0.25)$) -- ($(m-9-17.south)-(0,0.25)$) -- ($(m-9-17.south)+(0,0.1)$); 
 \draw ($(m-9-21.south)+(0,0.1)$) -- ($(m-9-21.south)-(0,0.15)$) -- ($(m-9-22.south)-(0,0.15)$) -- ($(m-9-22.south)+(0,0.1)$);
 \draw (m-8-1.north west) rectangle (m-9-22.south east);
 
\begin{scope}[on background layer]
 \fill[red!50] (m-8-3.north west) rectangle (m-10-3.south east);
 \fill[red!50] (m-10-2.north west) rectangle (m-11-2.south east);
 \fill[red!50] (m-11-1.north west) rectangle (m-12-1.south east);
 \fill[red!50] (m-7-4.north west) rectangle (m-8-4.south east);
 \fill[red!50] (m-6-5.north west) rectangle (m-7-5.south east);
 \fill[red!50] (m-5-8.north west) rectangle (m-6-8.south east);
 \fill[red!50] (m-4-11.north west) rectangle (m-5-11.south east);
 \fill[red!50] (m-3-18.north west) rectangle (m-4-18.south east);
 \fill[red!50] (m-2-19.north west) rectangle (m-3-19.south east);
 \fill[red!50] (m-1-20.north west) rectangle (m-2-20.south east);
\end{scope}

\node[left,anchor=east,black,xshift=-2mm] at (m-8-1) {$x={}$};
\end{tikzpicture}
\caption{Construction of the Greene-Kleitman SCD containing a bitstring~$x$ via parenthesis matching.
The highlighted bits are the leftmost unmatched~0 and the rightmost unmatched~1 in each bitstring.
}
\label{fig:paren}
\end{figure}

Each chain $C$ of length~$h$ in~$Q_n$ can be encoded compactly as a string of length~$n$ over the alphabet~$\{0,1,*\}$ in the form
\begin{equation}
\label{eq:C}
C=u_0*u_1*\cdots *u_{h-1}*u_h,
\end{equation}
where $u_0,\ldots,u_h\in D$.
The symbols $*$ represent unmatched positions, and the vertices along the chain are obtained by replacing the $*$s by 1s followed by 0s in all $h+1$ possible ways; see~\eqref{eq:xi} below.
For example, the chain shown in Figure~\ref{fig:paren} is $C={*}{*}{*}{*}{*}01{*}01{*}010011{*}{*}{*}01$, so we have $u_0=u_1=u_2=u_3=u_4=u_8=u_9=\varepsilon$, $u_5=u_6=u_{10}=01$, and $u_7=010011$.

We distinguish four types of chains depending on whether~$u_0$ and~$u_h$, i.e., the first and last valleys in~\eqref{eq:C}, are empty or not.
These chain types are denoted by $[--]$, $[+-]$, $[++]$, and $[-+]$, where the first symbol is $-$ if $u_0=\varepsilon$ and $+$ otherwise, and the second symbol is $-$ if $u_h=\varepsilon$ and $+$ otherwise.
For example, the chain in Figure~\ref{fig:paren} is a $[-+]$-chain.
We also use the symbol~$?$ in these type specifications if we do not know whether a valley is empty or not.
Note that there is no $[--]$-chain in~$Q_n$ of length~$h=1$ unless $n=1$.

Given a chain~$C$ of length~$h$ as in~\eqref{eq:C}, the $i$th vertex of $C$ from the bottom is
\begin{equation}
\label{eq:xi}
x=u_0 \,1\,\cdots u_{i-1}\,1\,u_i\,0\,u_{i+1}\cdots \,0\,u_h
\end{equation}
where $i=0,\ldots,h$, and this vertex belongs to level $k=\frac{n-h}{2}+i$.
Note that every vertex~$x$ of~$Q_n$ can be written uniquely in the form~\eqref{eq:xi}, and we refer to this as the \emph{chain factorization of~$x$}.
For the following arguments, it will be crucial to consider the lattice path representation of~$x$, with the valleys~$u_0,\ldots,u_h$ that are separated by $i$ many $\diagup$-steps, followed by $h-i$ many $\diagdown$-steps, i.e., the valley~$u_i$ is the highest one on the lattice path.

We use $C_{h,i}$, $0\le i \le h$, to denote the set of the $i$th vertices in all chains of length~$h$.
Moreover, we partition $C_{h,i}$ into two sets~$C^-_{h,i}$ and~$C^+_{h,i}$, depending on whether the valley~$u_i$ in~\eqref{eq:xi} is empty or nonempty, respectively.
Clearly, $C^+_{h,h}$ are exactly the top vertices of $[?+]$-chains of length~$h$ and $C^+_{h,0}$ are exactly the bottom vertices of $[+?]$-chains of length~$h$, and similarly with~$-$ instead of~$+$.
Note that the sets $C_{h,i}$ are empty if $n$ is odd and $h$ is even, or vice versa.

\subsection{Lexical matchings}

\begin{figure}
% set distances for step labels
\tikzset{dlabela/.style={xshift=1.4mm,yshift=2mm}}
\tikzset{dlabelb/.style={xshift=-1.4mm,yshift=-2mm}}
\tikzset{ulabela/.style={xshift=-1.4mm,yshift=2mm}}
\tikzset{ulabelb/.style={xshift=1.4mm,yshift=-2mm}}

\begin{tikzpicture}[scale=0.4]

\draw[ptr] (-1,0) -- (26,0);
\draw[ptr] (0,-5) -- (0,3.5);

\node[node_black] (0) at (0,0) {};
\node[node_black] (1) at (1,1) {};
\node[node_black] (2) at (2,2) {};
\node[node_black] (3) at (3,3) {};
\node[node_black] (4) at (4,2) {};
\node[node_black] (5) at (5,1) {};
\node[node_black] (6) at (6,0) {};
\node[node_black] (7) at (7,1) {};
\node[node_black] (8) at (8,0) {};
\node[node_black] (9) at (9,-1) {};
\node[node_black] (10) at (10,0) {};
\node[node_black] (11) at (11,-1) {};
\node[node_black] (12) at (12,-2) {};
\node[node_black] (13) at (13,-1) {};
\node[node_black] (14) at (14,-2) {};
\node[node_black] (15) at (15,-3) {};
\node[node_black] (16) at (16,-2) {};
\node[node_black] (17) at (17,-1) {};
\node[node_black] (18) at (18,-2) {};
\node[node_black] (19) at (19,-3) {};
\node[node_black] (20) at (20,-4) {};
\node[node_black] (21) at (21,-5) {};
\node[node_black] (22) at (22,-4) {};

\path[line_solid,gray] (0) -- (1) -- (2) -- (3) (6) -- (7) (9) -- (10) (12) -- (13) (15) -- (16) -- (17) (21) -- (22);
\draw[ultra thick] (3) -- node[dlabela] {0} (4) -- node[dlabela] {1} (5) -- node[dlabela] {3} (6) (7) -- node[dlabela] {2} (8) -- node[dlabelb] {5} (9) (10) -- node[dlabelb] {4} (11) -- node[dlabelb] {8} (12) (13) -- node[dlabela] {7} (14) -- node[dlabelb] {10} (15) (17) -- node[dlabela] {6} (18) -- node[dlabela] {9} (19) (20) -- node[dlabelb] {12} (21);
\draw[line width=2pt, red] (19) -- node[dlabela] {11} (20);

\node[anchor=west] at (0.5,-3.2) {$x=1110001001001001100001$};
\node[anchor=east] at (21.5,1) {$x=M_{n,k}^{11,\downarrow}(y)$};
\node at (24,3) {$x^\diagdown$};
\node at (-4,0) {level $k=9$};
\node at (23.8,-0.7) {$n=22$};

\draw[red,|-to] (18,2.5) -- node[right] {$M_{n,k}^{11,\uparrow}$} (18,4.5);
\draw[red,|-to] (15,4.5) -- node[left] {$M_{n,k}^{11,\downarrow}$} (15,2.5);

\draw[dashed] (22,-5) -- (22,12);

\begin{scope}[yshift=8.5cm]
\draw[ptr] (-1,0) -- (26,0);
\draw[ptr] (0,-4) -- (0,3.5);

\node[node_black] (0) at (0,0) {};
\node[node_black] (1) at (1,1) {};
\node[node_black] (2) at (2,2) {};
\node[node_black] (3) at (3,3) {};
\node[node_black] (4) at (4,2) {};
\node[node_black] (5) at (5,1) {};
\node[node_black] (6) at (6,0) {};
\node[node_black] (7) at (7,1) {};
\node[node_black] (8) at (8,0) {};
\node[node_black] (9) at (9,-1) {};
\node[node_black] (10) at (10,0) {};
\node[node_black] (11) at (11,-1) {};
\node[node_black] (12) at (12,-2) {};
\node[node_black] (13) at (13,-1) {};
\node[node_black] (14) at (14,-2) {};
\node[node_black] (15) at (15,-3) {};
\node[node_black] (16) at (16,-2) {};
\node[node_black] (17) at (17,-1) {};
\node[node_black] (18) at (18,-2) {};
\node[node_black] (19) at (19,-3) {};
\node[node_black] (20) at (20,-2) {};
\node[node_black] (21) at (21,-3) {};
\node[node_black] (22) at (22,-2) {};
\node[node_white] (23) at (23,-1) {};
\node[node_white] (24) at (24,0) {};
\node[node_white] (25) at (25,1) {};

\tikzset{every node/.style={inner sep=1pt}}

\path[line_solid,gray] (3) -- (4) -- (5) -- (6) (7) -- (8) -- (9) (10) -- (11) -- (12) (13) -- (14) -- (15) (17) -- (18) -- (19) (20) -- (21);
\draw[ultra thick] (0) -- node[ulabela,xshift=0.7mm] {2} (1) -- node[ulabela] {1} (2) -- node[ulabela] {0} (3) (6) -- node[ulabela] {3} (7) (9) -- node[ulabelb] {5} (10) (12) -- node[ulabela] {7} (13) (15) -- node[ulabelb] {10} (16) -- node[ulabela] {8} (17) (21) -- node[ulabela] {12} (22);
\draw[line width=2pt, red] (19) -- node[ulabela] {11} (20);
\draw[dashed] (22) -- node[ulabelb] {9} (23) -- node[ulabelb] {6} (24) -- node[ulabela] {4} (25);

\node[anchor=west] at (0.8,-3.2) {$y=1110001001001001100101$};
\node[anchor=east] at (21.5,1) {$y=M_{n,k}^{11,\uparrow}(x)$};
\node at (24,3) {$y^{\diagup}$};
\node at (-5,0) {level $k+1=10$};
\end{scope}

\end{tikzpicture}
\caption{Definition of $p$-lexical matchings between levels~9 and~10 of~$Q_{22}$, where steps flipped along the $p$-lexical edge are marked with~$p$.
Between those two levels, the vertex~$x$ is incident with $p$-lexical edges for each $p\in\{0,1,\ldots,12\}$, and the vertex~$y$ is incident with $p$-lexical edges for each $p\in\{0,1,\ldots,12\}\setminus\{4,6,9\}$.
}
\label{fig:lexmatch}
\end{figure}

Lexical matchings in~$Q_n$ were introduced by Kierstead and Trotter~\cite{MR962224}, and they are parametrized by some integer $p\in\{0,1,\ldots,n-1\}$.
These matchings are defined as follows; see Figure~\ref{fig:lexmatch}.
We interpret a bitstring~$x$ as a lattice path, and we let~$x^\diagdown$ denote the lattice path that is obtained by appending $\diagdown$-steps to~$x$ until the resulting path ends at height~$-1$.
If $x$ ends at a height less than~$-1$, then~$x^\diagdown:=x$.
Similarly, we let~$x^{\,\diagup}$ denote the lattice path obtained by appending $\diagup$-steps to~$x$ until the resulting path ends at height~$+1$.
If $x$ ends at a height more than~$+1$, then~$x^{\,\diagup}:=x$.
We let $L_{n,k}$ denote the set of all vertices on level~$k$ of~$Q_n$, and we define a matching by two partial mappings $M_{n,k}^{p,\uparrow}\colon L_{n,k}\rightarrow L_{n,k+1}$ and $M_{n,k}^{p,\downarrow} \colon L_{n,k+1}\rightarrow L_{n,k}$ defined as follows:
For any~$x\in L_{n,k}$ we consider the lattice path~$x^\diagdown$ and scan it row-wise from top to bottom, and from right to left in each row.
The partial mapping~$M_{n,k}^{p,\uparrow}(x)$ is obtained by flipping the $p$th $\diagdown$-step encountered in this fashion, where counting starts with $0,1,\ldots$, if this $\diagdown$-step is part of the subpath~$x$ of~$x^\diagdown$; otherwise $x$ is left unmatched.
Similarly, for any~$x\in L_{n,k+1}$ we consider the lattice path~$x^{\,\diagup}$ and scan it row-wise from top to bottom, and from left to right in each row.
The partial mapping~$M_{n,k}^{p,\downarrow}(x)$ is obtained by flipping the $p$th $\diagup$-step encountered in this fashion if this~$\diagup$-step is part of the subpath~$x$ of~$x^{\,\diagup}$; otherwise $x$ is left unmatched.
It is straightforward to verify that these two partial mappings are inverse to each other, so they indeed define a matching between levels~$k$ and~$k+1$ of~$Q_n$, called the \emph{$p$-lexical matching}, which we denote by~$M_{n,k}^p$.
We also define $M^p_n:=\bigcup_{0\le k <n} M^p_{n,k}$, where we omit the index~$n$ whenever it is clear from the context.
In the following, we will only ever use $p$-lexical edges for $p=0,1,2$.
For instance, it is well-known that taking the union of all 0-lexical edges, i.e., the set $M^0$, yields exactly the Greene-Kleitman SCD~\cite{MR962224}.
This property is captured by the following lemma, together with several other explicit perfect matchings, consisting of $\{0,1,2\}$-lexical edges between certain sets of vertices from our SCD; see Figure~\ref{fig:match}.

To state the lemma, for a set $M$ of edges of~$Q_n$ and disjoint sets $A,B$ of vertices, we let $M[A,B]$ denote the set of all edges of~$M$ between $A$ and~$B$.
Moreover, for any vertex $x\in C^-_{h,i}$, $1<i<h\le n$, we consider the chain factorization $x=u_0 \,1\,\cdots u_{i-2}\,1\,u_{i-1}\,1\,0\,u_{i+1}\cdots \,0\,u_h$ with $u_0,\ldots,u_h\in D$, and we define a neighbor~$z(x)$ on the level below by
\begin{equation*}
z(x):=\begin{cases}
u_0\,1\cdots u_{i-2}\,1\,0\,0\,u_{i+1}\cdots 0\,u_h & \text{ if }u_{i-1}=\varepsilon, \\
u_0\,1\cdots u_{i-2}\,1\,0\,v\,0\,w\,1\,0\,u_{i+1}\cdots 0\,u_h & \text{ if }u_{i-1}=0\,v\,1\,w\text{ with }v,w\in D.
\end{cases}
\end{equation*}
Note that in the first case, $(x,z(x))$ is a 0-lexical edge, and in the second case, $(x,z(x))$ is a 2-lexical edge.
The result of this operation can be written more compactly as
\begin{equation}
\label{eq:z}
z(x)=u_0\,1\cdots u_{i-2}\,1\,0\,\rot{u_{i-1}}\,0\,u_{i+1}\cdots 0\,u_h.
\end{equation}

\begin{lemma}
\label{lem:lex}
For every $n\ge 3$, the following sets of edges~$M[A,B]$ are perfect matchings in~$Q_n$ between the vertex sets~$A$ and~$B$.
\begin{enumerate}[label=(\roman*)]
\item $M^0[C_{h,i},C_{h,i+1}]$ for every $0\le i < h \le n$;
\item $M^1[C^-_{1,0},C^-_{1,1}]$, $M^1[C^+_{h,i},C^-_{h+2,i+2}]$, and $M^1[C^+_{h,i},C^-_{h+2,i}]$ for every $0\le i \le h \le n-2$;
\item $Z^{02}[C^-_{h,i-1},C^-_{h,i}]$ for every $1<i<h\le n$, where $Z^{02}:=\{(x,z(x))\mid x\in C^-_{h,i}\}$.
\end{enumerate}
\end{lemma}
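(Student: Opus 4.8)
Throughout, note that all three items have the same shape: each asserts that a set $M[X,Y]$ is a perfect matching, where $M$ is $M^0$ in~(i), $M^1$ in~(ii), and the edge set $Z^{02}$ in~(iii). In all three cases $M$ is a matching of~$Q_n$ (for $M=Z^{02}$ this follows from the injectivity of $z$ established below), and the two vertex sets lie on two consecutive levels of~$Q_n$; hence it suffices to show that for each vertex in one of the two sets the unique $M$-edge at it pointing towards the other set lands in that set, and that the resulting map between the two sets is a bijection, whose inverse we exhibit explicitly. All computations are carried out on the chain factorization~\eqref{eq:xi}, and we repeatedly use that tree rotation acts as a bijection of~$D$ (sending $\varepsilon$ to $\varepsilon$), which follows from uniqueness of the left and right factorizations. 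Item~(i) is then immediate from the fact that $M^0$ is the Greene--Kleitman SCD: if $x=u_0\,1\cdots 1\,u_i\,0\,u_{i+1}\cdots 0\,u_h\in C_{h,i}$ with $i<h$, then flipping the leftmost unmatched~$0$ (the first star position) produces the $(i+1)$th vertex of the same chain, which lies in $C_{h,i+1}$, and symmetrically the down-edge at a vertex of $C_{h,i+1}$ leads to $C_{h,i}$; so $M^0[C_{h,i},C_{h,i+1}]$ is precisely the family of chain edges between the two relevant levels, one for each chain of length~$h$.

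For~(iii), rewrite~\eqref{eq:z} as $z(x)=u_0\,1\cdots u_{i-2}\,1\,0\,\rot{u_{i-1}}\,0\,u_{i+1}\cdots 0\,u_h$ for $x\in C^-_{h,i}$ with chain factorization $x=u_0\,1\cdots u_{i-2}\,1\,u_{i-1}\,1\,0\,u_{i+1}\cdots 0\,u_h$. Reapplying the bracket-matching rule shows that $z(x)$ has exactly $i-1$ unmatched~$1$s and valleys $(u_0,\dots,u_{i-2},\varepsilon,\rot{u_{i-1}},u_{i+1},\dots,u_h)$; in particular its highest valley is empty, so $z(x)\in C^-_{h,i-1}$. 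Conversely, a vertex $y\in C^-_{h,i-1}$ with valleys $(v_0,\dots,v_{i-2},\varepsilon,v_i,v_{i+1},\dots,v_h)$ equals $z(x)$ for the unique $x\in C^-_{h,i}$ whose valleys are $(v_0,\dots,v_{i-2},w,\varepsilon,v_{i+1},\dots,v_h)$, where $w\in D$ is the unique element with $\rot{w}=v_i$; this map is well defined and is a two-sided inverse of $x\mapsto z(x)$, so $Z^{02}[C^-_{h,i-1},C^-_{h,i}]$ is a perfect matching. That $(x,z(x))$ is a $0$-lexical edge when $u_{i-1}=\varepsilon$ and a $2$-lexical edge otherwise is then a separate, direct check against the definition of the lexical matchings.

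The main work is~(ii), where we must first translate the description ``scan $x^\diagdown$ from top to bottom and, in each row, from right to left, and flip the second $\diagdown$-step'' into a statement about chain factorizations. Fix $x\in C^+_{h,i}$ with highest valley $u_i\neq\varepsilon$. Decomposing $u_i$ into its ``prime'' Dyck pieces, one checks that the $\diagdown$-steps of $x^\diagdown$ at its maximum height~$i$ are precisely the leading step of each such prime piece together with the first star-$0$ of~$x$, which is the rightmost of them (when $i=h$ there is no such star position, and its role is played by the first appended step of $x^\diagdown$, which lies outside~$x$). Consequently the second such $\diagdown$-step is the distinguished~$0$ of the \emph{right} factorization $u_i=u'\,0\,v'\,1$; it belongs to~$x$, and flipping it gives
\begin{equation*}
M^{1,\uparrow}(x)=u_0\,1\cdots u_{i-1}\,1\;u'\,1\,v'\,1\;0\,u_{i+1}\cdots 0\,u_h,
\end{equation*}
which has valleys $(u_0,\dots,u_{i-1},u',v',\varepsilon,u_{i+1},\dots,u_h)$ and index $i+2$, hence lies in $C^-_{h+2,i+2}$.

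A symmetric analysis of $x^{\,\diagup}$ (scanned from top to bottom and, in each row, from left to right) shows that $M^{1,\downarrow}(x)$ flips the distinguished~$1$ of the \emph{left} factorization $u_i=0\,a\,1\,b$, producing a vertex of $C^-_{h+2,i}$ with valleys $(u_0,\dots,u_{i-1},\varepsilon,a,b,u_{i+1},\dots,u_h)$. In either direction the inverse simply recombines the three Dyck pieces into which $u_i$ was split (into $u'\,0\,v'\,1$, respectively $0\,a\,1\,b$), which is legitimate because $D$ is closed under these operations; this yields the perfect matchings $M^1[C^+_{h,i},C^-_{h+2,i+2}]$ and $M^1[C^+_{h,i},C^-_{h+2,i}]$. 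The remaining pair $C^-_{1,0},C^-_{1,1}$ is the boundary case the general scheme misses (it would involve a length-$1$ chain of type $[--]$, which does not exist for $n\ge3$); there the same scan applied to $x=0\,u_1\in C^-_{1,0}$ gives $M^{1,\uparrow}(0\,u_1)=0\,u'\,1\,v'\,1$ for the right factorization $u_1=u'\,0\,v'\,1$, this vertex lies in $C^-_{1,1}$, and the inverse is $v_0\,1\mapsto 0\,\rot{v_0}$. I expect the only genuinely delicate point of the whole argument to be this translation of the row-by-row scanning rule into the valley formulas --- in particular, correctly locating the selected $\diagdown$- or $\diagup$-step and verifying that it lies in~$x$ rather than in the appended tail of $x^\diagdown$ or $x^{\,\diagup}$, together with the boundary indices $i=0$ and $i=h$; once the formulas are in place, every item closes by the same ``bijection with an explicit inverse'' pattern.
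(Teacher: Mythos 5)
Your proof is correct and follows essentially the same route as the paper's: for each item, compute the chain factorization of the image and check that the resulting correspondence is a bijection by exhibiting the inverse. The only difference is that you spell out why the $1$-lexical scan selects the distinguished $0$ of the right (resp.\ $1$ of the left) factorization of $u_i$, via the prime-Dyck decomposition of $u_i$ and the identification of the top-row $\diagdown$- (resp.\ $\diagup$-) steps, whereas the paper asserts this directly from the definition; both are fine, and the rest of the argument, including the boundary case $C^-_{1,0}\to C^-_{1,1}$ and the rotation bijection in~(iii), matches the paper's proof.
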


\begin{figure}[h!]
\includegraphics{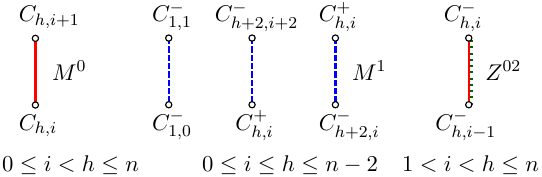}
\caption{Perfect matchings described by Lemma~\ref{lem:lex}.
The $\{0,1,2\}$-lexical edges are drawn solid, dashed, and dotted, respectively.
}
\label{fig:match}
\end{figure}

\begin{proof}
To prove~(i), let $x\in C_{h,i}$ and consider the chain factorization~\eqref{eq:xi} of~$x$.
By the definition of lexical matchings, the neighbor~$y$ of~$x$ on the level above reached via the 0-lexical edge is obtained by flipping the 0-bit after the valley~$u_i$ in~$x$, so $y=u_0 \,1\,\cdots u_{i}\,1\,u_{i+1}\,0\,u_{i+2}\cdots \,0\,u_h.$
From this and~\eqref{eq:xi} we conclude that $y\in C_{h,i+1}$ and that these edges reach all vertices in~$C_{h,i+1}$.

To prove~(ii), first consider the subcase $x\in C^-_{1,0}$ and the chain factorization $x=0\,u_1$ with $u_1\in D$.
Moreover, as $n\geq 3$, we have $u_1\neq \varepsilon$, so we may consider the right factorization $u_1=v\,0\,w\,1$ with $v,w\in D$.
By the definition of lexical matchings, the neighbor~$y$ of~$x=0\,u_1=0\,v\,0\,w\,1$ on the level above reached via the 1-lexical edge is obtained by flipping the 0-bit after the valley~$v$ in~$x$, so $y=0\,v\,1\,w\,1=u_0\,1$ with $u_0\in D$ defined by $\rot{u_0}=u_1$.
From this and~\eqref{eq:xi} we conclude that $y\in C^-_{1,1}$ and that these edges reach all vertices in~$C^-_{1,1}$.

We now consider the subcase $x\in C^+_{h,i}$ and the chain factorization~\eqref{eq:xi} of~$x$.
We know that $u_i\neq \varepsilon$, so $u_i$ has the right factorization $u_i=v\,0\,w\,1$ with $v,w\in D$.
The neighbor~$y$ of $x$ on the level above reached via the 1-lexical edge is obtained by flipping the 0-bit after the valley~$v$ in~$x$, so $y=u_0 \,1\,\cdots u_{i-1}\,1\,v\,1\,w\,1\,0\,u_{i+1}\cdots \,0\,u_h$.
From this and~\eqref{eq:xi} we conclude that $y\in C^-_{h+2,i+2}$ and that these edges reach all vertices in~$C^-_{h+2,i+2}$.
For the same~$x$, let us now compute the neighbor~$y'$ of~$x$ on the level below reached via the 1-lexical edge.
For this we consider the left factorization $u_i=0\,v'\,1\,w'$ with $v',w'\in D$.
The vertex~$y'$ is reached by flipping the 1-bit after the valley~$v'$ in~$x$, so $y'=u_0 \,1\,\cdots u_{i-1}\,1\,0\,v'\,0\,w'\,0\,u_{i+1}\cdots \,0\,u_h$.
Similarly to before, we obtain that $y'\in C^-_{h+2,i}$ and that these edges reach all vertices in~$C^-_{h+2,i}$, so the proof of part~(ii) is complete.

To prove part~(iii), note that \eqref{eq:xi} and~\eqref{eq:z} imply that $z(x)\in C^-_{h,i-1}$ and that $C^-_{h,i-1}$ is precisely the image of $C^-_{h,i}$ under the mapping~$z$.
\end{proof}

\section{Cycle factor construction}
\label{sec:cfac}

We now construct a cycle factor~$\cC_{n,\ell}$ in the graph~$Q_{n,\ell}$, $n=2m+1$, i.e., in the subgraph of the $n$-cube induced by the middle $2\ell$ levels.
Throughout this and the following sections we consider fixed~$m\geq 1$ and $2\le \ell\le m+1$.
We construct the cycle factor incrementally, starting with chains from the Greene-Kleitman SCD and adding $\{0,1,2\}$-lexical edges between certain sets of vertices, see Figure~\ref{fig:factor}.
In the following, when referring to a subgraph given by a set of edges, we mean the subgraph of~$Q_{n,\ell}$ induced by those edges.
Moreover, we say that a chain is \emph{short} if its length is at most $2\ell-3$, i.e., if it does not span all levels of~$Q_{n,\ell}$.

Our construction starts by taking all those short chains, formally
\begin{subequations}
\label{eq:X}
\begin{equation}
X^0:=\bigcup_{0\le i<h\le 2\ell-3}M^0[C_{h,i},C_{h,i+1}];
\end{equation}
recall Lemma~\ref{lem:lex}~(i).
From Lemma~\ref{lem:lex}~(ii) we know that 1-lexical edges perfectly match all bottom vertices of $[-+]$-chains of length~1 with all top vertices of $[+-]$-chains of length~1 along the edges
\begin{equation}
\Xm^1:=M^1[C^-_{1,0},C^-_{1,1}].
\end{equation}
Furthermore, for $1\le h\le 2\ell-5$, 1-lexical edges perfectly match all top vertices of $[?+]$-chains of length~$h$ with all top vertices of $[?-]$-chains of length~$h+2$, and all bottom vertices of $[+?]$-chains of length~$h$ with all bottom vertices of $[-?]$-chains of length~$h+2$ along the edges
\begin{equation}
\Xt^1:=\bigcup_{1\le h\le 2\ell-5} M^1[C^+_{h,h},C^-_{h+2,h+2}], \quad
\Xb^1:=\bigcup_{1\le h\le 2\ell-5} M^1[C^+_{h,0},C^-_{h+2,0}],
\end{equation}
respectively.
Note that the only vertices of short chains that have degree~1 in the set
\begin{equation}
X:=X^0 \cup \Xm^1 \cup \Xt^1 \cup \Xb^1
\end{equation}
\end{subequations}
are exactly the vertices of $C^+_{2\ell-3,2\ell-3}$ and $C^+_{2\ell-3,0}$; that is, the top vertices of $[?+]$-chains of length~$2\ell-3$ and the bottom vertices of $[+?]$-chains of length~$2\ell-3$.

\begin{figure}
\includegraphics{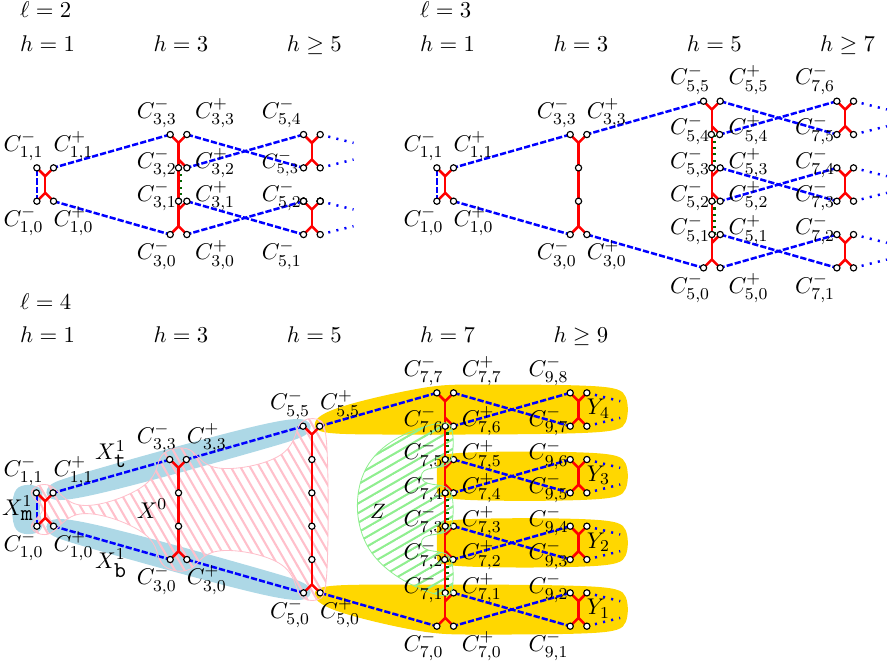}
\caption{Illustration of the cycle factor $\cC_{n,\ell}$ for $\ell=2,3,4$.
Each bullet represents an entire set of vertices, as specified in the figure, lines between them specify perfect matchings between these sets.
The $\{0,1,2\}$-lexical edges are drawn with solid, dashed, and dotted lines, respectively.
In the bottom part, various sets of matching edges are highlighted.
}
\label{fig:factor}
\end{figure}

Next, between every pair of consecutive levels of $Q_{n,\ell}$ we take all 0-lexical and 1-lexical edges that are not incident to a degree-2 vertex in~$X$.
Specifically, between these pairs of levels we take all 0-lexical edges from chains that are not short and all 1-lexical edges between chains that are not short.
In addition, between the top two levels we take all 1-lexical edges between top vertices of $[?+]$-chains of length~$2\ell-3$ and top vertices of $[?-]$-chains of length~$2\ell-1$, and symmetrically, between the bottom two levels we take all 1-lexical edges between bottom vertices of $[+?]$-chains of length~$2\ell-3$ and bottom vertices of $[-?]$-chains of length~$2\ell-1$.
Formally, these sets of edges are
\begin{subequations}
\label{eq:Y}
\begin{equation}
Y_1:=Y'_1 \cup M^1[C^+_{2\ell-3,0},C^-_{2\ell-1,0}], \quad Y_{\ell}:=Y'_{\ell} \cup M^1[C^+_{2\ell-3,2\ell-3},C^-_{2\ell-1,2\ell-1}], \quad
Y_k:=Y'_k
\end{equation}
for $1<k<\ell$ where
\begin{equation}
Y'_k := \bigcup_{\substack{h\ge 2\ell-1 \\ i:=(h-(2\ell-1))/2+2(k-1)}} M^0[C_{h,i},C_{h,i+1}] \cup M^1[C^+_{h,i},C^-_{h+2,i+2}] \cup M^1[C^+_{h,i+1},C^-_{h+2,i+1}]
\end{equation}
for $1\le k \le \ell$.
Note that $Y_1$ and $Y_\ell$ contain all $\{0,1\}$-lexical edges between the bottom two levels or the top two levels of~$Q_{n,\ell}$, respectively.
We also define
\begin{equation}
Y:=\bigcup_{1\le k \le \ell} Y_k.
\end{equation}
\end{subequations}
As a consequence of these definitions and Lemma~\ref{lem:lex}~(i) and~(ii), the only vertices of $Q_{n,\ell}$ that have degree~1 in the set $X\cup Y$ are exactly the vertices of $C^-_{2\ell-1,i}$ for $1\le i\le 2\ell-2$.
We thus add the edges
\begin{equation}
\label{eq:Z}
Z:=\bigcup_{i=1,3,5,\ldots,2\ell-3} Z^{02}[C^-_{2\ell-1,i},C^-_{2\ell-1,i+1}]
\end{equation}
defined in part~(iii) of Lemma~\ref{lem:lex}, which makes
\begin{equation}
\label{eq:cfac}
\cC_{n,\ell}:=X\cup Y\cup Z
\end{equation}
a cycle factor in the graph~$Q_{n,\ell}$.

Note that if $\ell=2$, then the sets $\Xt^1$ and $\Xb^1$ are empty and $X^0$ contains only chains of length~1; see the top left part of Figure~\ref{fig:factor}.
In the other extreme case $\ell=m+1$, the set $Y\cup Z$ contains only a single path of length~$n+2$, namely the unique chain of length~$2\ell-1=n$ with an additional 1-lexical edge from~$Y_1$ and~$Y_{\ell}$ attached on each side.

\subsection{Comparison with previous constructions}
\label{sec:comparison}

Our cycle factor construction generalizes the construction for~$\ell=1$ presented in \cite{MR3483129,gregor-muetze-nummenpalo:18}, which simply consisted in taking the union of all 0-lexical and 1-lexical edges between the middle two levels.
It also generalizes the construction for~$\ell=2$ presented in~\cite{jaeger-et-al-journal:21}, which also only used $\{0,1,2\}$-lexical matchings.
In fact, all these earlier papers actually used $\{m,m-1,m-2\}$-lexical matching edges, but these are isomorphic to $\{0,1,2\}$-lexical edges by reversing bitstrings.
The earlier construction for~$\ell=2$ seemed rather arbitrary at the time, but now nicely fits into the general picture shown in Figure~\ref{fig:factor}\footnote{As the picture of this construction resembles a rocket, with the tip on the left and the boosters on the right, one might be tempted to consider this rocket science.}.

\section{Structure of cycles}
\label{sec:struc}

In this section, we describe the structure of cycles in the factor~$\cC_{n,\ell}$ defined in~\eqref{eq:cfac} (where $n=2m+1$, $m\geq 1$, $2\le \ell\le m+1$).
In particular, we give a combinatorial interpretation for certain vertices encountered along each cycle, allowing us to compute the number and lengths of some of the cycles.
In the following, we call a cycle that contains a short $[--]$-chain from the Greene-Kleitman SCD a \emph{short cycle}, and any other cycle is called \emph{long}.
The key properties about short and long cycles are captured in Lemmas~\ref{lem:short} and~\ref{lem:long} below, respectively.

\subsection{Short cycles}
\label{sec:short}

The next lemma completely describes the structure of short cycles; see Figure~\ref{fig:short}.
Note that for $\ell=2$ there are no short $[--]$-chains and hence no short cycles.

\begin{lemma}
\label{lem:short}
The short cycles in the factor~$\cC_{n,\ell}$ defined in~\eqref{eq:cfac} satisfy the following properties:
\begin{enumerate}[label=(\roman*)]
\item For every $[--]$-chain $C={*}\,u_1\,{*}\,u_2\,{*}$ of length~$h=3\le 2\ell-3$, the short cycle containing~$C$ also contains the $[+-]$-chain $a\,{*}$ of length~1 and the $[-+]$-chain ${*}\,\rot{a}$ of length~1, where $a:=0\,u_1\,1\,u_2\in D$, plus three additional edges connecting these chains.
\item For every $[--]$-chain $C=*\,u_1\,{*}\cdots u_{h-1}\,{*}$ of length~$5\le h \le 2\ell-3$, the short cycle containing~$C$ also contains the $[+-]$-chain $a*u_3*\cdots u_{h-1}\,{*}$ of length~$h-2$, the $[-+]$-chain ${*}\,u_1\,{*}\cdots u_{h-3}\,{*}\,b$ of length~$h-2$, and the $[++]$-chain $a\,{*}\,u_3\,{*}\cdots u_{h-3}\,{*}\,b$ of length~$h-4$, where $a:=0\,u_1\,1\,u_2\in D$ and $b:= u_{h-2}\,0\,u_{h-1}\,1\in D$, plus four additional edges connecting these chains.
\end{enumerate}
In particular, all short cycles lie entirely within the edge set~$X$ defined in~\eqref{eq:X}, and the only edges in~$X$ not contained in a short cycle are $[++]$-chains of length~$2\ell-5$ if~$\ell\geq 3$ and $[+-]$-, $[-+]$-, and $[++]$-chains of length~$2\ell-3$.
\end{lemma}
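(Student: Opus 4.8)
The plan is to start from an arbitrary short $[--]$-chain $C$ and to follow the cycle of $\cC_{n,\ell}$ through it one edge at a time, reading off every step from the explicit description of the $1$-lexical matchings $\Xm^1,\Xt^1,\Xb^1$ provided by Lemma~\ref{lem:lex}(ii). Since $n$ is odd every chain has odd length, and there is no $[--]$-chain of length $1$, so $C$ has length $h$ with $h$ odd and $3\le h\le 2\ell-3$; I write $C={*}\,u_1\,{*}\cdots u_{h-1}\,{*}$ and set $a:=0\,u_1\,1\,u_2$ and $b:=u_{h-2}\,0\,u_{h-1}\,1$, both in $D$. First I would check that the cycle through $C$ cannot leave $X$: the chain $C\subseteq X^0$ meets no edge of $Y\cup Z$ (those edges are incident only to chains of length $\ge 2\ell-1$ and to the vertices in $C^+_{2\ell-3,0}\cup C^+_{2\ell-3,2\ell-3}$, none of which belong to the $[--]$-chain $C$ of length $\le 2\ell-3$), its interior vertices have degree $2$ in $X$ (just the two chain edges of $X^0$), and its two endpoints, lying in $C^-_{h,h}$ and $C^-_{h,0}$, pick up their second $X$-edge from $\Xt^1$ and $\Xb^1$ respectively, because $3\le h\le 2\ell-3$ puts these sets in the range of those matchings.

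The main computation is then to trace the forced degree-$2$ continuations. For $h\ge 5$ (this will give~(ii)) a single substitution on the chain factorization of the relevant vertex, using the formula for the $1$-lexical neighbour on the level above or below from the proof of Lemma~\ref{lem:lex}(ii), shows that the $\Xt^1$-edge at the top of $C$ reaches the top of $C_L:={*}\,u_1\,{*}\cdots u_{h-3}\,{*}\,b$, that the $\Xb^1$-edge at the bottom of $C_L$ reaches the bottom of $C_0:=a\,{*}\,u_3\,{*}\cdots u_{h-3}\,{*}\,b$, that the $\Xt^1$-edge at the top of $C_0$ reaches the top of $C_R:=a\,{*}\,u_3\,{*}\cdots u_{h-1}\,{*}$, and that the $\Xb^1$-edge at the bottom of $C_R$ returns to the bottom of $C$. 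From the first and last valleys one reads off that $C_L$ is a $[-+]$-chain of length $h-2$, $C_0$ a $[++]$-chain of length $h-4$, and $C_R$ a $[+-]$-chain of length $h-2$, and that all three are short, hence contained in $X^0$. The four chains are pairwise distinct (distinct types) and the four connecting $1$-lexical edges join distinct pairs among them, so the closed walk just traced is a single cycle, proving~(ii). For $h=3$ (this gives~(i)) the chain $C_0$ has length $-1$ and drops out: now $C_L={*}\,\rot{a}$ and $C_R=a\,{*}$ both have length $1$, the bottom of $C_L$ lies in $C^-_{1,0}$, and the $\Xm^1$-edge (in place of an $\Xb^1$-edge) joins it to the top of $C_R$, giving a cycle through three chains and three connecting edges, proving~(i).

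For the concluding assertions I would argue as follows. Every short cycle comes from a short $[--]$-chain and was realized inside $X$ above, so all short cycles lie in $X$. To identify the remaining edges, recall $X=X^0\cup\Xm^1\cup\Xt^1\cup\Xb^1$ with $X^0$ the union of all chains of length $\le 2\ell-3$. The chains used by short cycles are exactly the $[--]$-chains of length $3,5,\dots,2\ell-3$ (as $C$), the $[-+]$- and $[+-]$-chains of length $1,3,\dots,2\ell-5$ (as $C_L$, resp.\ $C_R$), and the $[++]$-chains of length $1,3,\dots,2\ell-7$ (as $C_0$, occurring only for $\ell\ge 4$); each occurs exactly once, which one checks by inverting the substitutions (for instance a $[+-]$-chain of length $j\le 2\ell-5$ recovers $u_1,u_2$ from the left factorization of its first valley, hence determines the unique short $[--]$-chain of which it is the $C_R$). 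Then one verifies that each $1$-lexical edge of $X$ either lies in some short cycle or is incident to one of the chains not in this list, namely the $[++]$-chains of length $2\ell-5$ (only for $\ell\ge 3$) and the $[+-]$-, $[-+]$-, and $[++]$-chains of length $2\ell-3$; this is the asserted description of the leftover edges. (In the boundary case $\ell=2$ there are no short $[--]$-chains, hence no short cycles, and $X=X^0\cup\Xm^1$ consists of the length-$1$ chains of types $[+-],[-+],[++]$ together with their incident $\Xm^1$-edges.)

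The substitutions themselves are routine, each being immediate from Lemma~\ref{lem:lex}(ii). I expect the main obstacle to be twofold: first, keeping the valley indices exactly right along $C\to C_L\to C_0\to C_R\to C$, so that the walk closes back on $C$ with the intermediate chains carrying precisely the claimed types and lengths (the index shifts near the ends of the valley sequence are where errors creep in); and second, the global bookkeeping for the ``in particular'' statement, i.e.\ showing that the companion chains of the short $[--]$-chains sweep out the stated families bijectively and that no $1$-lexical edge of $X$ is missed, where the several boundary cases ($\ell\in\{2,3\}$, $h=3$, lengths $2\ell-5$ and $2\ell-3$) need to be matched carefully against the case distinction in the statement.
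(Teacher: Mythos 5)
Your proposal is correct and takes essentially the same approach as the paper: start from a short $[--]$-chain and follow the forced degree-$2$ continuations through the $1$-lexical matchings $\Xm^1,\Xt^1,\Xb^1$, reading off the companion chains from the explicit formulas in Lemma~\ref{lem:lex}(ii). The paper merely says ``consider Figure~\ref{fig:short}, recall the definition~\eqref{eq:X} and use the definition of 1-lexical matchings,'' whereas you spell out the valley-index bookkeeping and the enumeration needed for the ``in particular'' clause, so your writeup is a faithful (and more explicit) expansion of the paper's terse proof.
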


\begin{figure}
\makebox[0cm]{ % artificial box to center the picture
\includegraphics{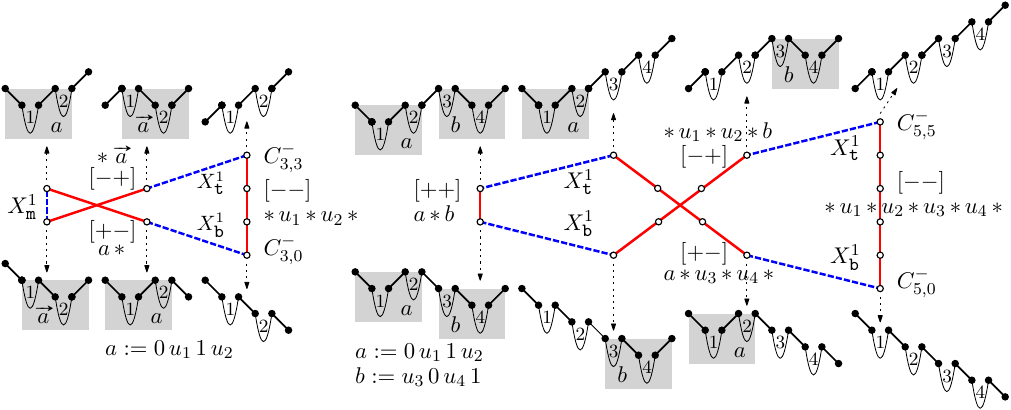}
}
\caption{Structure of short cycles as described by Lemma~\ref{lem:short}, for case~(i) on the left and for case~(ii) on the right.
Valleys~$u_i$ are labeled by~$i$ in the lattice paths for better readability.
\label{fig:short}
}
\end{figure}

\begin{proof}
To prove parts~(i) and~(ii) of the lemma consider Figure~\ref{fig:short}, and observe that edges of $\Xm^1\cup \Xt^1\cup \Xb^1$ connect end vertices of $[--]$-chains as in parts~(i) or~(ii) of the lemma into cycles of the claimed form.
For this recall the definition~\eqref{eq:X} and use the definition of 1-lexical matchings.
The last part of the lemma follows by observing that short cycles as described in part~(i) and (ii) pick up all short chains except the ones mentioned in the lemma.
\end{proof}

Note that by Lemma~\ref{lem:short}, the short cycle containing a $[--]$-chain of length~$h$ has total length~$4h-4$.

We say that a cycle in $\cC_{n,\ell}$ has \emph{range} $2r$ if it is contained in the middle $2r$ levels but not in the middle $2r-2$ levels.
Clearly, the short cycle containing a $[--]$-chain of length~$h$ has range $2r=h+1\geq 4$, and it visits vertices in all $2r$ middle levels.
We will see later that long cycles have range~$2\ell$, and that each long cycle visits vertices in all $2\ell$ levels.
The next corollary is an immediate consequence of Lemma~\ref{lem:short}.

\begin{corollary}
\label{cor:range}
For fixed~$n$, any short cycle of range~$2r$ appears in each of the cycle factors $\cC_{n,\ell}$ for $\ell>r$.
\end{corollary}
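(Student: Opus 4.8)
The plan is to invoke Lemma~\ref{lem:short} directly: a short cycle of range~$2r$ is, by definition, one that lives in the middle $2r$ levels but not in the middle $2r-2$ levels, and by the discussion following Lemma~\ref{lem:short} such a cycle is precisely the short cycle containing some $[--]$-chain $C$ of length $h = 2r-1$ from the Greene-Kleitman SCD. The key point is that the Greene-Kleitman SCD itself does not depend on~$\ell$ — it is a decomposition of all of~$Q_n$ — so the chain~$C$ exists and has the same chain factorization~\eqref{eq:C} regardless of which~$\ell$ we consider. Thus the first step is to observe that the \emph{data} determining the short cycle through~$C$ (namely $C$, the shorter chains $a\,{*}$, ${*}\,\rot a$, etc., together with the connecting $1$-lexical edges) is intrinsic to~$Q_n$ and makes no reference to~$\ell$.

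Next I would check that whenever $\ell > r$, i.e. $2\ell - 3 \geq 2r - 1 = h$, all of these pieces are actually present in the edge set~$X$ defining $\cC_{n,\ell}$. This is where the hypothesis $\ell > r$ enters: the chain~$C$ has length $h \leq 2\ell-3$, so it qualifies as a short chain and is picked up by $X^0$ via Lemma~\ref{lem:lex}~(i); the auxiliary chains $a\,{*}$, ${*}\,\rot a$ (case~(i)) or the $[+-]$-, $[-+]$-, $[++]$-chains of lengths $h-2, h-2, h-4$ (case~(ii)) likewise have length at most $2\ell-3$ and so lie in $X^0$ as well; and the connecting edges belong to $\Xm^1 \cup \Xt^1 \cup \Xb^1$ by exactly the degree/endpoint bookkeeping carried out in the proof of Lemma~\ref{lem:short}. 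Since $X \subseteq \cC_{n,\ell}$ by~\eqref{eq:cfac}, and since $\cC_{n,\ell}$ is a cycle factor (a disjoint union of cycles), the connected component of these edges inside $\cC_{n,\ell}$ is forced to be exactly the short cycle described by Lemma~\ref{lem:short}, with no extra edges attached — indeed Lemma~\ref{lem:short} already tells us the short cycle lies entirely within~$X$. Hence this same short cycle of range~$2r$ appears in $\cC_{n,\ell}$ for every $\ell > r$.

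I do not anticipate a genuine obstacle here: the statement is essentially a restatement of the $\ell$-independence already visible in Lemma~\ref{lem:short}, and the corollary is flagged as ``an immediate consequence'' in the text. The only mild care needed is to confirm that the endpoint vertices of the relevant chains (the sets $C^+_{h,h}$, $C^+_{h,0}$, $C^-_{h+2,\cdot}$, etc.) have the same degree-$1$/degree-$2$ status in $X$ for all admissible~$\ell$, so that the $1$-lexical matching edges attach in the same way; but this is precisely the content of the last sentence of the statement of Lemma~\ref{lem:short} together with Lemma~\ref{lem:lex}~(ii), and requires no new computation.
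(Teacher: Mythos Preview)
Your proposal is correct and is essentially a spelled-out version of the paper's own argument, which simply declares the corollary ``an immediate consequence of Lemma~\ref{lem:short}.'' You correctly identify the key point: the description of the short cycle through a $[--]$-chain of length $h=2r-1$ given by Lemma~\ref{lem:short} is intrinsic (involving only $C$, the auxiliary chains of lengths $h-2$ and $h-4$, and the $1$-lexical connecting edges) and mentions~$\ell$ only via the inequality $h\le 2\ell-3$, which is exactly the hypothesis $\ell>r$; since $\cC_{n,\ell}$ is $2$-regular, this cycle is then forced to be a full component.
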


For any $n=2m+1$, $m\geq 1$, we can compute the number of $[--]$-chains of length~$h$ (by counting lattice paths using a reflection trick), and we thus obtain the number of short cycles of range~$2r$ as
\[\binom{2m-1}{m-r+1}-\binom{2m-1}{m-r}=\frac{r-1}{m}\binom{2m}{m-r+1},\]
see Table~\ref{tab:2factors}.
As we shall see, the structure of long cycles is more complicated in general, and we are not able to count them explicitly, with few exceptions:
For $\ell=1$ the number of all cycles of the factor is given by the number of plane trees with $m$ edges (see \cite[Proposition~2]{gregor-muetze-nummenpalo:18}).
For $\ell=2$ the number of all (long) cycles is given by the number of plane trivalent trees with $m$ internal vertices (see \cite[Proposition~12]{jaeger-et-al-journal:21}).
For $\ell=m+1$ there is exactly one long cycle, and so the total number of cycles in the factor is~$\binom{2m-1}{m-1}$.

\begin{table}
\caption{Number of cycles in the factor~$\cC_{n,\ell}$, $n=2m+1$, for small values of~$m$ and~$\ell$.
The first number in brackets counts short cycles of range $2\ell-2$, the second number counts long cycles (of range~$2\ell$).
For fixed~$m$ and~$\ell$, the full distribution of cycles across ranges can be recovered by considering all other table entries on the same row.
For instance, the full distribution of cycles of all ranges for $m=8$ and $\ell=9$ is $[1430,2002,1638,910,350,90,14,1]$ from 1430 short cycles of range~4 (through the middle 4~levels of the 17-cube), to 14 short cycles of range~16, to a single long cycle of range~18 (through all levels).
}
\footnotesize
\renewcommand\tabcolsep{2pt}
\begin{tabular}{l|rrrrrrrrrrrrr}
$m$ & $\ell=1$ & 2 & 3 & 4 & 5 & 6 & 7 & 8 & 9 \\ \hline
1  & 1 & 1 [0,1] &  &  &  &  &  &   \\
2  & 1 & 1 [0,1] & 3 [2,1] &  &  &  &  &  &   \\
3  & 2 & 1 [0,1] & 6 [5,1] & 10 [4,1] &  &  &  &  &  \\
4  & 3 & 4 [0,4] & 17 [14,3] & 29 [14,1] & 35 [6,1] &  &  &  &  \\
5  & 6 & 6 [0,6] & 46 [42,4] & 93 [48,3] & 118 [27,1] & 126 [8,1] &  &  &  \\
6  & 14 & 19 [0,19] & 142 [132,10] & 307 [165,10] & 412 [110,5] & 452 [44,1] & 462 [10,1] &  &  \\
7  & 34 & 49 [0,49] & 446 [429,17] & 1010 [572,9] & 1438 [429,8] & 1643 [208,5] & 1704 [65,1] & 1716 [12,1] &     \\
8  & 95 & 150 [0,150] & 1475 [1430,45] & 3474 [2002,42] & 5113 [1638,43] & 6002 [910,22] & 6337 [350,7] & 6421 [90,1] & 6435 [14,1]  \\
\end{tabular}
\label{tab:2factors}
\end{table}

\subsection{Long cycles}
\label{sec:long}

We now describe long cycles.
First we show that each of the sets~$Y_k$ defined in~\eqref{eq:Y} contains only paths, but no cycles, which will allow us to show that every long cycle has range~$2\ell$ and visits vertices from all~$2\ell$ levels.
To describe the end vertices of the paths formed by the edge set~$Y_k$, we use the following result shown in~\cite[Proposition~2]{gregor-muetze-nummenpalo:18}.

\begin{lemma}
\label{lem:paths}
For any~$d\geq 1$, the union of the 0- and 1-lexical matchings between levels~$d$ and~$d+1$ of~$Q_{2d}$ contains no cycles, but only paths, and the sets of first and last vertices of these paths are~$D_{2d}$ and~$D'_{2d}$, respectively.
Furthermore, for any path with first vertex $x\in D_{2d}$ and last vertex $y\in D'_{2d}$, if $x=u\,0\,v\,1$, with $u,v\in D$, is the right factorization of~$x$, then $y=u\,1\,0\,v$.
\end{lemma}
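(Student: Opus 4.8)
The plan is to analyze the union $M := M^0_{2d,d} \cup M^1_{2d,d}$ of the $0$- and $1$-lexical matchings between levels $d$ and $d+1$ of $Q_{2d}$ directly from the definition of lexical matchings in terms of lattice paths, and to track how these two matchings act on vertices. Every vertex of $Q_{2d}$ on level $d$ or $d+1$ has degree at most two in $M$ (one edge from $M^0$, one from $M^1$, possibly fewer if it is left unmatched), so $M$ is a disjoint union of paths and cycles; the whole content is to rule out cycles and to identify the path endpoints. The key observation is that a vertex is unmatched by a given $p$-lexical matching precisely when the $\diagdown$-step (resp.\ $\diagup$-step) that would be flipped lies in the appended tail rather than in the string itself. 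So I would first characterize, for a vertex $x$ on level $d$, exactly when $x$ is unmatched in $M^0_{2d,d}$ and when in $M^1_{2d,d}$, and symmetrically for level $d+1$; these conditions are governed by the height profile of the lattice path of $x$, in particular by how far below (or above) the line $y=0$ the path dips.

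First I would pin down the degree-one vertices of $M$. On level $d$ the lattice path of $x$ ends at height $0$; scanning $x^\diagdown = x$ (no tail is needed since $x$ already ends at height $0$, so we append one $\diagdown$-step to reach height $-1$, and that appended step is the $0$th step scanned) one checks that $M^0$ leaves $x$ unmatched exactly when $x$ never goes strictly below $y=0$ while some care is needed for the very last step, i.e.\ exactly when $x \in D_{2d}$. A parallel analysis on level $d+1$: the path of $x$ ends at height $+1$ there (it has $d+1$ up-steps, $d-1$ down-steps), $x^{\,\diagup} = x$, and the $1$-lexical down-matching $M^{1,\downarrow}$ leaves $x$ unmatched exactly when $x \in D'_{2d}$, the set of paths that rise above $y=0$ exactly once. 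The $0$-lexical matching never leaves a level-$(d+1)$ vertex unmatched when restricted appropriately, and the $1$-lexical up-matching never leaves a level-$d$ vertex in $D_{2d}$ matched upward; collecting these facts shows the set of endpoints is contained in $D_{2d} \cup D'_{2d}$, and a counting argument ($|D_{2d}| = |D'_{2d}| = C_d$, the Catalan number, and each path has two endpoints) shows these are exactly the endpoints and that there are no cycles — equivalently, one shows every cycle would have to be endpoint-free, contradicting that following alternating $M^0$/$M^1$ edges strictly monotonically changes some statistic (e.g.\ the "area" or the position of the matched step) until it gets stuck.

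Finally, for the endpoint-pairing statement, I would start at an $x \in D_{2d}$ with right factorization $x = u\,0\,v\,1$ ($u,v \in D$), and chase the alternating path. The first edge is a $1$-lexical up-edge (since $x$ is unmatched by $M^0$): by the definition of $M^{1,\uparrow}$ this flips the $0$-bit just after the valley $u$, sending $x = u\,0\,v\,1$ to $u\,1\,v\,1$ on level $d+1$. One then alternately applies $M^0$ (down) and $M^1$ (up), and the claim is that this walk eventually reaches $y = u\,1\,0\,v \in D'_{2d}$; the cleanest way is to recognize that $M^0$ restricted to these levels is exactly the collection of Greene–Kleitman chains truncated to two levels, so the alternating path is forced through a predictable sequence of configurations, and to verify by a short induction on $|v|$ (using the left/right factorizations of Dyck paths from Section~\ref{sec:dyck}, and the rotation identity $\rot{x}$) that the walk terminates at $u\,1\,0\,v$. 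The main obstacle I expect is precisely this last step: correctly bookkeeping the lexical scan order (top-to-bottom rows, right-to-left within a row for the up-maps, left-to-right for the down-maps) so that one can prove the alternating walk does what is claimed, rather than the combinatorially easier facts that $M$ is a union of paths and that the endpoint sets are $D_{2d}$ and $D'_{2d}$.
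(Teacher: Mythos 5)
Your overall framework—view $M := M^0_{2d,d}\cup M^1_{2d,d}$ as a graph of maximum degree two, characterize the degree-one vertices, then rule out cycles and chase the endpoint pairing—is the right kind of direct analysis. However, there is a concrete error in the degree-one characterization: you place $D'_{2d}$ on level $d+1$, but every element of $D'_{2d}$ has $d$ ones and $d$ zeros and hence lies on level $d$, the same level as $D_{2d}$. In fact, a level-$(d+1)$ vertex of $Q_{2d}$ has $d+1$ ones and $d-1$ zeros, so its lattice path ends at height $+2$, not $+1$ as you wrote; since $+2>+1$ no $\diagup$-tail is appended, the scan of $x^{\,\diagup}=x$ never hits an appended step, and therefore \emph{both} $M^{0,\downarrow}$ and $M^{1,\downarrow}$ match every level-$(d+1)$ vertex. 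Thus all level-$(d+1)$ vertices have degree two, all path endpoints sit on level~$d$ (each path has even length and both ends at level~$d$), and the correct dichotomy on level~$d$ is: $x$ is unmatched by $M^{0,\uparrow}$ iff $x\in D_{2d}$, and $x$ is unmatched by $M^{1,\uparrow}$ iff $x\in D'_{2d}$. Your second paragraph is therefore structurally wrong as written even though it reaches the right sets.

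Beyond that mislocation, the two hardest parts of the lemma are left as gestures. The "counting argument" you sketch ($|D_{2d}|=|D'_{2d}|=C_d$, hence $C_d$ paths) does not by itself exclude cycles—a cycle consumes no endpoints, so the count of degree-one vertices says nothing about whether cycles coexist with the paths; you need either a genuine monovariant along alternating $M^0/M^1$ edges or the plane-tree bijection argument, and your proposal names neither concretely. Likewise, the endpoint formula $y=u\,1\,0\,v$ is asserted but the inductive bookkeeping of the lexical scan order is explicitly deferred. You correctly identify the first edge out of $x=u\,0\,v\,1$ (flipping the $0$ after $u$ via $M^{1,\uparrow}$), but from there the proposal is a plan, not a proof. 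Fixing the level-$(d+1)$ error is straightforward, but the no-cycles argument and the endpoint-pairing induction are precisely the content of the cited Proposition~2 of~\cite{gregor-muetze-nummenpalo:18}, and they still need to be supplied.
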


\begin{lemma}
\label{lem:Ypaths}
For every $1\le k \le \ell$, the edge set~$Y_k$ defined in~\eqref{eq:Y} contains only paths, but no cycles.
Furthermore, any path formed by the edges of~$Y_k$ has its first and last vertex in the following sets, and if its first vertex~$x$ has the form specified below, then its last vertex~$y$ has the form specified below:
\begin{enumerate}[label=(\roman*)]
\item If $k=\ell$, then we have $x\in C^+_{2\ell-3,2\ell-3}$ and $y\in C^-_{2\ell-1,2\ell-2}$, and if
\begin{align*}
x&=u_0\,1\cdots u_{2\ell-4}\,1\,\underbrace{v\,0\,w\,1}_{u_{2\ell-3}} \in C^+_{2\ell-3,2\ell-3}, \text{ then } \\
y&=u_0\,1\cdots u_{2\ell-4}\,1\,v\,1\,0\,w\,\in C^-_{2\ell-1,2\ell-2}.
\end{align*}
\item If $k=1$, then we have $x\in C^+_{2\ell-3,0}$ and $y\in C^-_{2\ell-1,1}$, and if
\begin{align*}
x&=\underbrace{0\,v\,1\,w}_{u_0}\,0\,u_1\cdots 0\,u_{2\ell-3} \in C^+_{2\ell-3,0}, \text{ then} \\
y&=v\,1\,0\,w\,0\,u_1\cdots 0\,u_{2\ell-3} \in C^-_{2\ell-1,1}.
\end{align*}
\item If $1<k<\ell$, then we have $x\in C^-_{2\ell-1,2k-1}$ and $y\in C^-_{2\ell-1,2k-2}$, and if
\begin{align*}
x&=u_0\,1\cdots u_{2k-3}\,1\,u_{2k-2}\,1\,0\,u_{2k} \cdots 0\,u_{2\ell-1} \in C^-_{2\ell-1,2k-1}, \text{ then} \\
y&=u_0\,1\cdots u_{2k-3}\,1\,0\,u_{2k-2}\,0\,u_{2k} \cdots 0\,u_{2\ell-1} \in C^-_{2\ell-1,2k-2}.
\end{align*}
\end{enumerate}
\end{lemma}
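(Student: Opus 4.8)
The plan is to analyze each set $Y_k$ by tracking how the three matchings making up $Y_k'$ (together with the extra $1$-lexical matchings in $Y_1$ and $Y_\ell$) glue chains of the Greene--Kleitman SCD together. Fix $k$ and recall that $Y_k'$ consists of, for every admissible height~$h$ and the corresponding index $i:=(h-(2\ell-1))/2+2(k-1)$, the matchings $M^0[C_{h,i},C_{h,i+1}]$, $M^1[C^+_{h,i},C^-_{h+2,i+2}]$, and $M^1[C^+_{h,i+1},C^-_{h+2,i+1}]$. The first type is an edge \emph{within} a chain of length~$h$ (moving from its $i$th to its $(i+1)$st vertex, by Lemma~\ref{lem:lex}(i)), while the two $1$-lexical types move \emph{between} chains of lengths $h$ and $h+2$; crucially, by Lemma~\ref{lem:lex}(ii) the $1$-lexical edges attach only at the $+$ (nonempty) vertices $C^+_{h,i}$ and $C^+_{h,i+1}$, so a vertex in $C^-_{h,i}$ or $C^-_{h,i+1}$ is never the endpoint of a $1$-lexical edge of $Y_k'$. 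The key structural observation to establish first is that every vertex of $Q_{n,\ell}$ that lies in the support of $Y_k$ has degree at most~$2$, and that the degree-$1$ vertices are exactly those listed in (i)--(iii) for the respective value of~$k$; this is a bookkeeping check using Lemma~\ref{lem:lex}(i)--(ii) to see which vertices receive one versus two incident edges. Once that is in hand, $Y_k$ is a disjoint union of paths and cycles, and it remains to rule out cycles and pin down the path endpoints.

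To rule out cycles and simultaneously compute the endpoints, I would exploit the local structure at a single height. For a fixed $h$ and its index~$i$, consider the graph on the vertex set $C_{h,i}\cup C_{h,i+1}$ restricted to $Q_{2d}$ with $d$ chosen so that these are consecutive levels: here the union of the $0$- and $1$-lexical edges between these two levels is precisely the object analyzed in Lemma~\ref{lem:paths}, which already tells us it is a disjoint union of paths whose first and last vertices are $D_{2d}$ and $D'_{2d}$, with the explicit $x=u\,0\,v\,1\mapsto y=u\,1\,0\,v$ endpoint formula. The only subtlety is translating Lemma~\ref{lem:paths}, stated for the middle two levels of $Q_{2d}$, into statements about the valleys $u_i,u_{i+1}$ sitting inside a longer chain factorization~\eqref{eq:xi}: the prefix $u_0\,1\cdots u_{i-1}\,1$ and the suffix $0\,u_{i+2}\cdots 0\,u_h$ are frozen, and only the "active window" between the $i$th up-step and the $i$th down-step — which is exactly a $Q_{2d}$ with $d=|u_i|+|u_{i+1}|+1$ — gets rearranged by the $\{0,1\}$-lexical action. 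Applying Lemma~\ref{lem:paths} inside this window gives acyclicity and, reading off the window endpoints via the right factorization of the relevant valley, produces exactly the three endpoint formulas in (i), (ii), (iii). For $k=\ell$ and $k=1$ one additionally checks that the extra edges $M^1[C^+_{2\ell-3,2\ell-3},C^-_{2\ell-1,2\ell-1}]$ resp.\ $M^1[C^+_{2\ell-3,0},C^-_{2\ell-1,0}]$ attach one more step at the top resp.\ bottom, which is what moves the first endpoint from a vertex of a length-$(2\ell-1)$ chain down to a vertex of the length-$(2\ell-3)$ chain $C^+_{2\ell-3,2\ell-3}$ resp.\ $C^+_{2\ell-3,0}$ — this accounts for the asymmetry of cases (i)/(ii) versus (iii).

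Carrying this out, the argument has three beats in order: (1) the degree count showing $Y_k$ is a union of paths and cycles with the claimed degree-$1$ set; (2) identifying, for each height, the "active window" as a copy of $Q_{2d}$ and invoking Lemma~\ref{lem:paths} there to get both acyclicity and the endpoint map within one height level; (3) chaining these windows across consecutive heights $h, h+2, h+4,\dots$ — noting that a $1$-lexical edge of $Y_k'$ takes the last vertex of a window at height~$h$ (which lies in $C^+$, hence is a "$D'$-type" endpoint in its window, i.e.\ a path end that still has an unused $1$-lexical attachment) to a vertex at height $h+2$ — so that the whole of $Y_k$ is a single interleaving of path-windows with no cycle closing up, with the overall first and last vertices being the ones that receive no attaching edge. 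The main obstacle I anticipate is step~(2)/(3): getting the index arithmetic and the valley factorizations exactly right so that the window at height~$h$ really is a clean $Q_{2d}$ and that its endpoints match up correctly with the $1$-lexical edges leaving toward height~$h\pm2$ — in particular verifying that no $1$-lexical edge ever lands on a $C^-$ vertex that is already degree~$2$ from two $0$-lexical edges, which would create a branch vertex and break the "paths only" conclusion. This is where a careful case split on whether the relevant valley is empty (recall the $[\pm\cdot]$ chain-type dichotomy of Section~\ref{sec:scd}) is needed, and it is exactly the place where the definitions~\eqref{eq:Y} of $Y_k$ were engineered, via the choice of index~$i$, to make everything fit.
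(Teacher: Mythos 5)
Your overall strategy---embed the lexical action into an auxiliary middle-levels instance and invoke Lemma~\ref{lem:paths}---matches the paper's, but the specific mechanism you propose is both different and, as stated, would not go through. You want to carve out, for each chain height~$h$, a per-height ``active window'' of dimension $2d$ with $d=|u_i|+|u_{i+1}|+1$ and apply Lemma~\ref{lem:paths} inside each window, then chain windows across heights. This runs into several problems: the window dimension you write down depends on the specific vertex (via $|u_i|,|u_{i+1}|$), so it is not a fixed $Q_{2d}$ that stays constant as you walk along a component of~$Y_k$; the $1$-lexical edges of~$Y_k'$ move between chains of lengths $h$ and $h+2$, so a window ``at height~$h$'' cannot contain them; and the chaining you sketch in step~(3) is essentially a re-derivation of Lemma~\ref{lem:paths}, which already encodes exactly how the $0$- and $1$-lexical edges move a path through many chain heights at once. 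The paper instead applies a \emph{single} global map $\varrho_{\alpha,\beta}$ (prepend $0^\alpha$, delete a suffix of fixed length~$\beta$) to all of~$Y_k$ at once. The crucial point making this well-defined is that the prepended $0^\alpha$ and the deleted suffix $b=0\,u_{2k}\cdots 0\,u_{2\ell-1}$ are invariant along each path component, so each whole component of~$Y_k$ lands between the middle two levels of one fixed~$Q_{2d}$, and Lemma~\ref{lem:paths} gives acyclicity \emph{and} the explicit endpoint map in one stroke.

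There is also a concrete gap in case~(iii) that your proposal does not address: the endpoints $C^-_{2\ell-1,2k-1}$ and $C^-_{2\ell-1,2k-2}$ do not map into $D_{2d}$ and $D'_{2d}$, so Lemma~\ref{lem:paths} cannot be applied to~$Y_k$ directly. The paper first enlarges $Y_k$ to $Y' := Y_k \cup M^1[C^+_{2\ell-3,2k-3},C^-_{2\ell-1,2k-1}]$, adding back the $1$-lexical edges that $Y_k$ deliberately omits; after this $\varrho_{\alpha,\beta}(Y')$ has its degree-one vertices in $D_{2d}\cup D'_{2d}$ and Lemma~\ref{lem:paths} applies, after which one reads off the endpoint of the $Y_k$-path by peeling off the single added edge. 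Finally, a minor point: for case~(ii) you treat it as analogous to~(i), which would work, but the paper uses the automorphism~$\varphi$ (reverse and complement) and obtains $Y_1=\varphi(Y_\ell)$, which is cleaner.
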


\begin{proof}
For any bitstring~$x=x_1\cdots x_n$ and any integers~$\alpha\geq 0$ and $0\le \beta\le n$ we define the mapping $\varrho_{\alpha,\beta}(x):=\varrho_{\alpha,\beta}(x_1\cdots x_n):=0^\alpha x_1\cdots x_{n-\beta}$, i.e., this mapping prepends~$x$ with $\alpha$ many 0s, and removes the last $\beta$ bits.

To prove~(i), we define $\alpha:=2\ell-3$ and we consider the edge set~$\varrho_{\alpha,0}(Y_{\ell})$ between levels~$d$ and~$d+1$ of~$Q_{2d}$, where $2d=n+\alpha$.
From the definition of lexical matchings and~\eqref{eq:Y}, we see that $\varrho_{\alpha,0}(Y_{\ell})$ is a set of 0- and 1-lexical matching edges in~$Q_{2d}$.
Moreover, as $\varrho_{\alpha,0}(C^+_{2\ell-3,2\ell-3})\seq D_{2d}$ and $\varrho_{\alpha,0}(C^-_{2\ell-1,2\ell-2})\seq D'_{2d}$, applying Lemma~\ref{lem:paths} in~$Q_{2d}$ shows that the edge set~$Y_\ell$ contains no cycles and any path formed by those edges has its first and last vertex in the sets~$C^+_{2\ell-3,2\ell-3}$ and~$C^-_{2\ell-1,2\ell-2}$.
Moreover, for a first vertex $x':=0^\alpha\,x\in D_{2d}$ with $x$ as in the lemma we have the right factorization $x'=0^\alpha\,x=a\,0\,w\,1$ with $a:=0^\alpha\,u_0\,1\cdots u_{2\ell-4}\,1\,v\in D$, so Lemma~\ref{lem:paths} shows that the last vertex of this path is indeed $y'=a\,1\,0\,w=0^\alpha\,y$ with $y$ as above.

To prove~(ii), consider the automorphism $\varphi$ of~$Q_n$ that reverses and complements all bits.
One can check that $\varphi$ maps 0- and 1-lexical matchings between levels~$i$ and~$i+1$ to 0- and 1-lexical matchings between levels~$n-i$ and $n-i-1$ for all $0\le i<n$.
Consequently, using~\eqref{eq:Y} we obtain that $Y_1=\varphi(Y_{\ell})$.
As moreover $C^+_{2\ell-3,0}=\varphi(C^+_{2\ell-3,2\ell-3})$ and $C^-_{2\ell-1,1}=\varphi(C^-_{2\ell-1,2\ell-2})$ the claim follows from part~(i).

To prove~(iii), let $x\in C^-_{2\ell-1,2k-1}$ be as in the lemma.
Moreover, let $b:=0\,u_{2k} \cdots 0\,u_{2\ell-1}$ be the suffix of~$x$, let $\beta$ be the length of~$b$, and let $\alpha:=2k-3$.
Consider the vertex $z:=u_0\,1\cdots u_{2k-3}\,0\,u_{2k-2}\,1\,b$, which is joined to~$x$ via a 1-lexical edge.
This edge, however, is not in~$Y_k$, as we have $z\in C^+_{2\ell-3,2k-3}$ from~\eqref{eq:xi}.
Adding all those edges to~$Y_k$ yields a larger set of edges $Y':=Y_k\cup M^1[C^+_{2\ell-3,2k-3},C^-_{2\ell-1,2k-1}]$.
We now consider the edge set $\varrho_{\alpha,\beta}(Y')$ between levels~$d$ and~$d+1$ of~$Q_{2d}$, where $2d=n+\alpha-\beta$.
Similarly to before, $\varrho_{\alpha,\beta}(Y')$ is a set of 0-lexical and 1-lexical matching edges in~$Q_{2d}$.
Moreover, as $\varrho_{\alpha,\beta}(C^+_{2\ell-3,2k-3})\seq D_{2d}$ and $\varrho_{\alpha,\beta}(C^-_{2\ell-1,2k-2})\seq D'_{2d}$, applying Lemma~\ref{lem:paths} in~$Q_{2d}$ shows that the edge set~$Y'$, and hence~$Y_k$, contains no cycles and any path formed by the edges of~$Y'$ has its first and last vertex in the sets~$C^+_{2\ell-3,2k-3}$ and~$C^-_{2\ell-1,2k-2}$.
Moreover, for the first vertex $z':=\varrho_{\alpha,\beta}(z)\in D_{2d}$ with $z$ from before we have the right factorization $z'=a\,0\,u_{2k-2}\,1$ with $a:=0^\alpha\,u_0\,1\,\cdots u_{2k-3}\in D$, so Lemma~\ref{lem:paths} shows that the last vertex of this path is $y':=a\,1\,0\,u_{2k-2}$, and as $y'=\varrho_{\alpha,\beta}(y)$ with $y$ as in the lemma, the claim is proved.
\end{proof}

The next lemma describes the effect of walking along a path in the set $Y\cup Z$ from its first to its last vertex.

\begin{lemma}
\label{lem:YZpaths}
The union of edges $Y\cup Z$ with $Y$ and~$Z$ as defined in~\eqref{eq:Y} and~\eqref{eq:Z} contains only paths, but no cycles, and the sets of first and last vertices of these paths are~$C^+_{2\ell-3,2\ell-3}$ and~$C^+_{2\ell-3,0}$, respectively.
Furthermore, for any path with first vertex
\[x=u_0\,1\,u_1 \cdots 1\,\underbrace{v\,0\,w\,1}_{u_{2\ell-3}} \in C^+_{2\ell-3,2\ell-3},\]
the corresponding last vertex is
\[y=0\,u_0\,1\,\rot{u_1}\,0\,u_2\,0\,\rot{u_3}\,0\,u_4\,0 \cdots \rot{u_{2\ell-5}}\,0\,u_{2\ell-4}\,0 \rot{v}\,0\,w \in C^+_{2\ell-3,0}.\]
\end{lemma}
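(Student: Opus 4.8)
The plan is to assemble the path structure of $Y\cup Z$ by gluing together the path structures of the individual pieces $Y_1,\dots,Y_\ell$ and $Z$, each of which is already understood: Lemma~\ref{lem:Ypaths} tells us that each $Y_k$ is a disjoint union of paths with a precise description of their endpoints and of the endpoint map, and Lemma~\ref{lem:lex}~(iii) tells us that $Z=\bigcup_{i=1,3,\dots,2\ell-3} Z^{02}[C^-_{2\ell-1,i},C^-_{2\ell-1,i+1}]$ is a disjoint union of perfect matchings, so $Z$ is itself a matching on the vertex set $\bigcup_{i=1}^{2\ell-2} C^-_{2\ell-1,i}$. First I would record the "interface" vertex sets: the degree-$1$ vertices of $X\cup Y$ inside $Q_{n,\ell}$ are exactly those of $C^-_{2\ell-1,i}$ for $1\le i\le 2\ell-2$ (stated just before~\eqref{eq:Z}), these are precisely the vertices where $Z$ attaches, and $Z$ pairs up $C^-_{2\ell-1,i}$ with $C^-_{2\ell-1,i+1}$ for odd $i$. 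Meanwhile from Lemma~\ref{lem:Ypaths} the endpoints of paths in $Y_k$ for $1<k<\ell$ lie in $C^-_{2\ell-1,2k-1}$ and $C^-_{2\ell-1,2k-2}$; the endpoints of paths in $Y_\ell$ lie in $C^+_{2\ell-3,2\ell-3}$ and $C^-_{2\ell-1,2\ell-2}$; and the endpoints of paths in $Y_1$ lie in $C^+_{2\ell-3,0}$ and $C^-_{2\ell-1,1}$. So the only vertices of $Y$ that $Z$ touches are the path endpoints, and $Z$ glues a $Y_k$-endpoint in $C^-_{2\ell-1,2k-1}$ to a $Y_{k+1}$-endpoint in $C^-_{2\ell-1,2k}$ (both lie in the $Z^{02}[C^-_{2\ell-1,2k-1},C^-_{2\ell-1,2k}]$ block), while the endpoints in $C^+_{2\ell-3,2\ell-3}$ and $C^+_{2\ell-3,0}$ remain free. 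Since every vertex in $Y\cup Z$ has degree at most $2$ and the only degree-$1$ vertices are those of $C^+_{2\ell-3,2\ell-3}$ and $C^+_{2\ell-3,0}$, the graph $Y\cup Z$ is a disjoint union of paths (no cycles, since a cycle would have no degree-$1$ vertex yet must pass through some $Z$-edge, hence through a $Y$-path endpoint that is also a $Z$-endpoint — this is fine — so instead I argue acyclicity directly: contracting each $Y_k$-path to a single edge and each $Z$-edge as is, the resulting multigraph on the "column indices" $0,1,\dots,2\ell-1$ of $C^\pm_{2\ell-1,\cdot}$ together with the two $C^+_{2\ell-3,\cdot}$ classes is a path $0 \text{–} 1 \text{–} 2 \text{–} \cdots \text{–} (2\ell-1)$, which is acyclic), and the path with a given free endpoint in $C^+_{2\ell-3,2\ell-3}$ ends at the unique other free endpoint in $C^+_{2\ell-3,0}$ on the same component.

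The structural claim settled, the remaining work is to trace the endpoint bijection explicitly by composing the maps from Lemmas~\ref{lem:Ypaths} and~\ref{lem:lex}~(iii) along the column $2\ell-3 \to 2\ell-2 \to 2\ell-3 \to 2\ell-4 \to \cdots \to 1 \to 0$ of levels. Starting from $x=u_0\,1\,u_1\,\cdots\,1\,\underbrace{v\,0\,w\,1}_{u_{2\ell-3}}\in C^+_{2\ell-3,2\ell-3}$, Lemma~\ref{lem:Ypaths}~(i) sends it to $u_0\,1\,\cdots\,u_{2\ell-4}\,1\,v\,1\,0\,w\in C^-_{2\ell-1,2\ell-2}$; then the $Z^{02}$-edge from Lemma~\ref{lem:lex}~(iii), whose effect is the rotation formula~\eqref{eq:z}, rotates the relevant valley; then Lemma~\ref{lem:Ypaths}~(iii) with $k=\ell-1$ moves us from $C^-_{2\ell-1,2\ell-3}$ to $C^-_{2\ell-1,2\ell-4}$; and so on, alternating a $Z$-rotation with a $Y_k$-step, until Lemma~\ref{lem:Ypaths}~(ii) performs the last step into $C^+_{2\ell-3,0}$. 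The key bookkeeping observation is that each $Y_k$-step ($1<k<\ell$) moves a single $1$ across a valley boundary (replacing $\cdots u_{2k-2}\,1\,0\,u_{2k}\cdots$ by $\cdots 1\,0\,u_{2k-2}\,0\,u_{2k}\cdots$, by Lemma~\ref{lem:Ypaths}~(iii)) and each $Z$-step replaces some valley $u_j$ by its rotation $\rot{u_j}$ (by~\eqref{eq:z}), and one checks by induction on $\ell$ that the cumulative effect on the odd-indexed valleys $u_1,u_3,\dots,u_{2\ell-5}$ and on $v$ is exactly one rotation each, while the even-indexed valleys $u_0,u_2,\dots,u_{2\ell-4}$ and $w$ pass through unchanged, with the $2\ell-3$ internal $1$s all migrating to sit between the valleys in the pattern of the chain factorization~\eqref{eq:xi} for a $[+?]$-chain bottom vertex. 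This yields $y=0\,u_0\,1\,\rot{u_1}\,0\,u_2\,0\,\rot{u_3}\,0\,u_4\,0\,\cdots\,\rot{u_{2\ell-5}}\,0\,u_{2\ell-4}\,0\,\rot{v}\,0\,w\in C^+_{2\ell-3,0}$, and a final application of the chain-factorization description~\eqref{eq:xi} confirms $y\in C^+_{2\ell-3,0}$, since its first valley $0\,u_0\,\cdots$ — wait, its first valley after stripping the leading structure is nonempty, so it is a $[+?]$-chain bottom vertex as required.

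I would organize this as: first the qualitative statement (disjoint union of paths, endpoint sets), proved via degree counts plus the contraction-to-a-path acyclicity argument; then the quantitative endpoint formula, proved by induction on $\ell$, with the base case $\ell=2$ (where $Y\cup Z=Y_1\cup Y_2$, $Z$ being a single matching block, or even $Z=\emptyset$ when $2\ell-3=1<2$ so the range of $i$ in~\eqref{eq:Z} is empty) handled directly from Lemma~\ref{lem:Ypaths}~(i) and~(ii), and the inductive step peeling off the $C^-_{2\ell-1,2\ell-2}\leftrightarrow C^-_{2\ell-1,2\ell-3}$ interface, i.e. one $Y_\ell$-step and one $Z$-rotation, to reduce to the statement for $\ell-1$ applied to a suffix/prefix of the bitstring (formally via the $\varrho_{\alpha,\beta}$ shift maps already used in the proof of Lemma~\ref{lem:Ypaths}). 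The main obstacle I anticipate is purely notational: keeping the indices straight in the alternating composition so that the rotations land on the odd-indexed valleys and $v$ and nowhere else — this is exactly the kind of place where an off-by-one in the parity of $i$ in~\eqref{eq:Z} versus the $2k-1,2k-2$ indexing in Lemma~\ref{lem:Ypaths}~(iii) can silently break the formula, so I would verify the $\ell=3$ case fully by hand as a sanity check before writing the general induction, and lean on Figure~\ref{fig:factor} to keep the gluing pattern visible.
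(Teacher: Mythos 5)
Your proposal follows essentially the same route as the paper: identify the interface classes $C^-_{2\ell-1,i}$, observe that $Z$ glues the $Y_k$-path endpoints in the right pattern, and then compose the endpoint maps from Lemma~\ref{lem:Ypaths} with the rotation~\eqref{eq:z} to obtain the alternating valley rotations. The gluing pattern you identify and the step-by-step tracing of the endpoint formula are correct, and match what the paper's terse proof asserts.

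There is, however, a gap in the acyclicity step. You contract each $Y_k$-path and each $C^-_{2\ell-1,i}$ class and claim the resulting object ``is a path $0\text{--}1\text{--}\cdots\text{--}(2\ell-1)$, which is acyclic.'' But after identification, the quotient is a \emph{multigraph} with many parallel edges (one per $Y_k$-path for each $k$, one per $Z$-edge for each odd $i$); such a multigraph is \emph{not} acyclic, and a cycle in $Y\cup Z$ could a priori project to a closed walk that turns around across two parallel quotient edges. To repair this you need the extra observation that each quotient edge is exclusively of $Y$-type (the pairs $(2k-2,2k-1)$) or exclusively of $Z$-type (the pairs $(2j-1,2j)$), so consecutive edges of any hypothetical cycle in $Y\cup Z$, which alternate $Y$ and $Z$, can never cover the same quotient edge; the column index is therefore strictly monotone along any component, which precludes closed walks. (Alternatively, the paper's implicit ``direct traversal'' argument is cleaner: starting from any $x\in C^+_{2\ell-3,2\ell-3}$, the composition of the perfect matchings $Y_\ell,Z,Y_{\ell-1},Z,\ldots,Y_1$ deterministically produces a path ending in $C^+_{2\ell-3,0}$, and these paths are pairwise disjoint and exhaust $Y\cup Z$.) One further small slip: for $\ell=2$ the range $i\in\{1,3,\ldots,2\ell-3\}$ in~\eqref{eq:Z} is $\{1\}$, not empty, so $Z\neq\emptyset$; the base case is $C^+_{1,1}\to C^-_{3,2}\to C^-_{3,1}\to C^+_{1,0}$ via $Y_2$, $Z$, $Y_1$, and the formula still checks out, but your ``or even $Z=\emptyset$'' aside is wrong.
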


\begin{proof}
As the edges in~$Z$ perfectly match the last vertices of paths formed by~$Y_k$ with the first vertices of paths formed by~$Y_{k-1}$ for all $2<k \le\ell$, and the last vertices of paths formed by~$Y_2$ with the last vertices of paths formed by~$Y_1$, this follows by iterating Lemma~\ref{lem:Ypaths} and by using~\eqref{eq:z}, where the $Z$-edges cause the rotation of every second valley $u_1,u_3,\dots,u_{2\ell-5},v$ in~$x$.
\end{proof}

\begin{figure}
\makebox[0cm]{ % artificial box to center the picture
\includegraphics{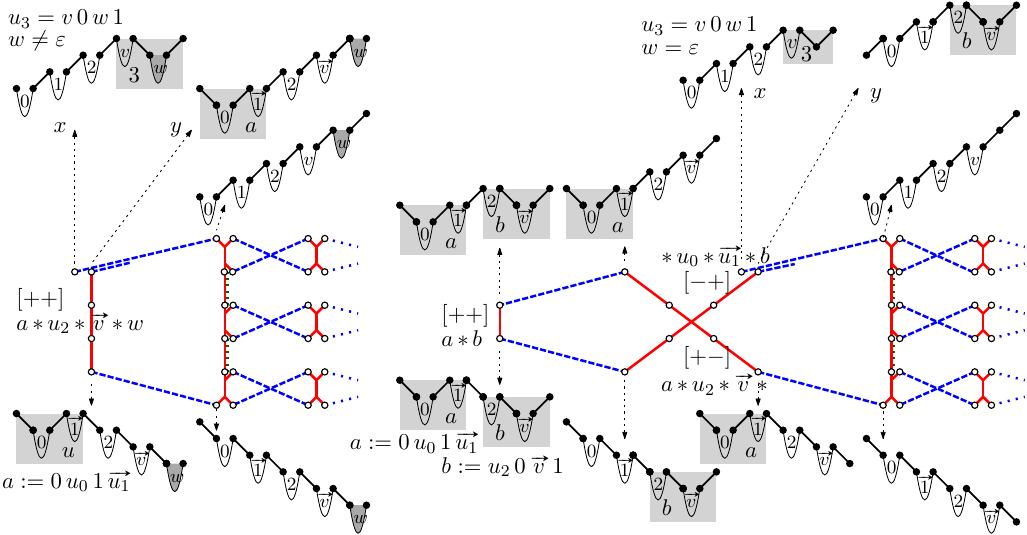}
}
\caption{Structure of long cycles as described by Lemma~\ref{lem:long}, for case~(i) ($w\ne \varepsilon$) on the left and for case~(ii) ($w=\varepsilon$) on the right.
\label{fig:long}
}
\end{figure}

The next lemma describes how the paths formed by the edges~$Y\cup Z$ interact with the short chains in~$X$ that are not contained in a short cycle (recall the last part of Lemma~\ref{lem:short}) to form long cycles.
We describe this interaction locally, in terms of how consecutive vertices from the set~$C^+_{2\ell-3,2\ell-3}$ on a long cycle look like, which is enough for our purposes.

\begin{lemma}
\label{lem:long}
Each long cycle in the factor~$\cC_{n,\ell}$ defined in~\eqref{eq:cfac} has range~$2\ell$ and visits vertices from all~$2\ell$ levels.
Moreover, let $x=u_0\,1\,u_1 \cdots 1\,u_{2\ell-4}\,1\,\underbrace{v\,0\,w\,1}_{u_{2\ell-3}} \in C^+_{2\ell-3,2\ell-3}$ be a vertex from the cycle and let~$y$ be the next vertex from $C^+_{2\ell-3,2\ell-3}$ on the cycle after~$x$.
\begin{enumerate}[label=(\roman*)]
\item If $w\ne\varepsilon$, then we have $y=a\,1\,u_2\,1\,\rot{u_3}\,1\,u_4\,1 \cdots \rot{u_{2\ell-5}}\,1\,u_{2\ell-4}\,1\,\rot{v}\,1\,w$, where $a:=0\,u_0\,1\,\rot{u_1}$, and between~$x$ and~$y$ the cycle traverses the $[++]$-chain $a\,{*}\,u_2\,{*}\,\rot{u_3}\,{*}\,u_4\,{*}\cdots \rot{u_{2\ell-5}}\,{*}\,u_{2\ell-4}\,{*}\,\rot{v}\,{*}\,w$ of length~$2\ell-3$.
\item If $w=\varepsilon$, then we have $y=1\,u_0\,1\,\rot{u_1}\,1\,u_2\,1\,\rot{u_3}\,1\cdots \rot{u_{2\ell-5}}\,1\,b$, where $b:=u_{2\ell-4}\,0\,\rot{v}\,1$, and between~$x$ and~$y$ the cycle traverses the $[+-]$-chain $a\,{*}\,u_2\,{*}\,\rot{u_3}\,{*}\cdots \rot{u_{2\ell-5}}\,{*}\,u_{2\ell-4}\,{*}\,\rot{v}\,{*}$ of length~$2\ell-3$, the $[++]$-chain $a\,{*}\,u_2\,{*}\,\rot{u_3}\,{*}\cdots \rot{u_{2\ell-5}}\,{*}\,b$ of length~$2\ell-5$, and the $[-+]$-chain ${*}\,u_0\,{*}\,\rot{u_1}\,{*}\,u_2\,{*}\cdots\rot{u_{2\ell-5}}\,{*}\,b$ of length~$2\ell-3$, where $a:=0\,u_0\,1\,\rot{u_1}$, plus two additional edges connecting these chains.
\end{enumerate}
\end{lemma}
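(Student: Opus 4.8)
The plan is to trace a long cycle edge by edge, alternately invoking Lemma~\ref{lem:YZpaths} for the part of the cycle lying in $Y\cup Z$ and the inventory of ``leftover'' short chains from Lemma~\ref{lem:short} for the part lying in~$X$, reading off the claimed bitstrings as we go. Throughout I would use that $\cC_{n,\ell}$ is a cycle factor, hence $2$-regular, so a long cycle, being distinct from every short cycle, shares no edge with any short cycle; in particular all of its $X$-edges are among the leftover ones in Lemma~\ref{lem:short}.

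First I would settle the statement about range. A short inspection of~\eqref{eq:X}, Lemma~\ref{lem:lex}, and the list in Lemma~\ref{lem:short} shows that the subgraph of~$X$ spanned by the leftover edges is a disjoint union of paths: every leftover chain of length~$2\ell-3$ has an end vertex in $C^+_{2\ell-3,0}$ or $C^+_{2\ell-3,2\ell-3}$ whose second $\cC_{n,\ell}$-edge lies in $Y_1$ or $Y_\ell$ (so that vertex has degree~$1$ here), and the leftover $[++]$-chains of length~$2\ell-5$ are each attached, via an $\Xt^1$-edge and an $\Xb^1$-edge, to two leftover chains of length~$2\ell-3$. Consequently a long cycle cannot lie inside~$X$; it contains an edge of $Y\cup Z$, hence (Lemma~\ref{lem:YZpaths}) an entire path~$P\subseteq Y\cup Z$. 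By Lemma~\ref{lem:YZpaths} the two ends of~$P$ lie in $C^+_{2\ell-3,2\ell-3}$ and $C^+_{2\ell-3,0}$, and the unique $(Y\cup Z)$-edge at each of those (an edge of $Y_\ell$ resp.\ $Y_1$) leaves to the top level~$m+\ell$ resp.\ the bottom level~$m+1-\ell$ of~$Q_{n,\ell}$; since consecutive cycle vertices lie on consecutive levels, the cycle meets all~$2\ell$ levels, so its range is~$2\ell$.

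For the local description I would fix $x\in C^+_{2\ell-3,2\ell-3}$ on the cycle and orient the cycle so that it leaves~$x$ along the unique $(Y\cup Z)$-edge at~$x$; since $x$ is then the first vertex of its $(Y\cup Z)$-path, Lemma~\ref{lem:YZpaths} gives that this path ends at $y':=0\,u_0\,1\,\rot{u_1}\,0\,u_2\,0\,\rot{u_3}\cdots 0\,u_{2\ell-4}\,0\,\rot{v}\,0\,w\in C^+_{2\ell-3,0}$, which I read off as the bottom vertex of the length-$(2\ell-3)$ chain $C'=a\,{*}\,u_2\,{*}\,\rot{u_3}\,{*}\cdots{*}\,u_{2\ell-4}\,{*}\,\rot{v}\,{*}\,w$ with $a:=0\,u_0\,1\,\rot{u_1}$. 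At~$y'$ the two incident $\cC_{n,\ell}$-edges are the $Y_1$-edge just used and the bottom edge of~$C'$, so the cycle climbs~$C'$, whose interior vertices each keep both their $\cC_{n,\ell}$-edges inside~$C'$. If $w\ne\varepsilon$ then $C'$ is a $[++]$-chain and its top vertex is precisely the claimed~$y$, reached without returning to $C^+_{2\ell-3,2\ell-3}$ — this is case~(i). If $w=\varepsilon$ then $C'$ is a $[+-]$-chain whose top lies in $C^-_{2\ell-3,2\ell-3}$, and its second $\cC_{n,\ell}$-edge is the $\Xt^1$-edge $M^1[C^+_{2\ell-5,2\ell-5},C^-_{2\ell-3,2\ell-3}]$; evaluating it via Lemma~\ref{lem:lex}(ii) gives the top of the $[++]$-chain~$C''$ of length~$2\ell-5$ from the statement (leftover, hence of type $[++]$ by Lemma~\ref{lem:short}). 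The cycle then descends~$C''$, uses the $\Xb^1$-edge $M^1[C^+_{2\ell-5,0},C^-_{2\ell-3,0}]$ at its bottom (which I would evaluate via Lemma~\ref{lem:lex}(ii) and~\eqref{eq:z}) to reach the bottom of the $[-+]$-chain~$C'''$ of length~$2\ell-3$ from the statement, and climbs~$C'''$ to its top, which is the claimed~$y$. Since all intermediate branch vertices and all chain-interior vertices lie outside $C^+_{2\ell-3,2\ell-3}$, this is indeed the next such vertex, with the two connecting edges being the $\Xt^1$- and $\Xb^1$-edge just used — this is case~(ii). (For $\ell=2$ the chain~$C''$ degenerates and the two connecting edges merge into a single $\Xm^1$-edge, consistently with~\cite{DBLP:conf/icalp/GregorJMSW18}.)

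The crux, and the most error-prone part, is the repeated verification that at each branch vertex ($y'$, the top of~$C'$, the bottom of~$C''$, the bottom of~$C'''$) the cycle factor $\cC_{n,\ell}$ has exactly the two incident edges claimed — this rests on the exact inventory of leftover edges from Lemma~\ref{lem:short} and on $\cC_{n,\ell}$ being $2$-regular — together with the bookkeeping needed to evaluate the bitstring maps of Lemmas~\ref{lem:YZpaths} and~\ref{lem:lex}(ii) and of~\eqref{eq:z}; the degenerate substrings that appear for small~$\ell$ need a little additional care.
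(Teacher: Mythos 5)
Your proposal is correct and follows essentially the same route as the paper: the paper's proof (which is terse, deferring to Figure~\ref{fig:long}) likewise derives the range statement from the last part of Lemma~\ref{lem:short} together with the first part of Lemma~\ref{lem:YZpaths}, and then obtains~(i) and~(ii) by starting at~$x$, following its $Y\cup Z$-path to the last vertex $y'\in C^+_{2\ell-3,0}$ via Lemma~\ref{lem:YZpaths}, and then climbing through the leftover chains and $\Xt^1$/$\Xb^1$-edges in~$X$ back up to~$y$. You make the branch-vertex degree bookkeeping explicit, which the paper leaves to the reader, but the key lemmas invoked and the order of the traversal are the same. One small slip: when you evaluate the $\Xb^1$-edge $M^1[C^+_{2\ell-5,0},C^-_{2\ell-3,0}]$ at the bottom of~$C''$, the reference to~\eqref{eq:z} is misplaced — \eqref{eq:z} defines the $Z^{02}$ matching between $C^-_{2\ell-1,i-1}$ and $C^-_{2\ell-1,i}$, which does not occur here; the relevant computation is the explicit formula for the downward $1$-lexical edge in the proof of Lemma~\ref{lem:lex}(ii) (the subcase yielding $y'\in C^-_{h+2,i}$ from the left factorization of $u_i$), which does indeed land on the bottom vertex of~$C'''$ as you claim.
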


\begin{proof}
The first part of the lemma follows immediately from the last part of~Lemma~\ref{lem:short} and the first part of Lemma~\ref{lem:YZpaths}.
To prove parts~(i) and (ii) consider Figure~\ref{fig:long}, and observe that a path in~$Y\cup Z$ starting from $x$ as in the lemma has last vertex as specified by Lemma~\ref{lem:YZpaths}, and then the cycle continues with a path in~$X$ on short chains as claimed until it arrives at the claimed vertex~$y$ (use~\eqref{eq:X} and the definition of 1-lexical matchings).
\end{proof}

\begin{figure}
\includegraphics{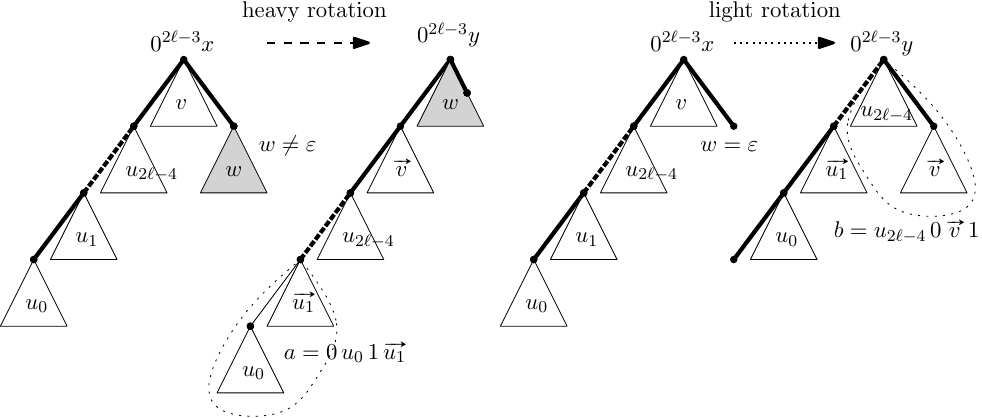}
\caption{Interpretation of the transformations stated in parts~(i) and (ii) of Lemma~\ref{lem:long} in terms of rooted trees, called heavy and light rotations, shown on the left and right hand side, respectively.
The spine edges are drawn bold.
}
\label{fig:hlrot}
\end{figure}

It is illuminating to interpret the transformations captured by parts~(i) and (ii) of Lemma~\ref{lem:long} in terms of rooted trees; see Figure~\ref{fig:hlrot}.
If we prepend $0^{2\ell-3}$ to the vertices~$x$ and~$y$ as in the lemma, then the resulting bitstrings $0^{2\ell-3}C^+_{2\ell-3,2\ell-3}=:\cT_{n,\ell}$ can be interpreted as rooted trees on $m+\ell-1$ edges with a root of degree at least~2 and the leftmost leaf in depth at least~$2\ell-3$.
We refer to the operations $0^{2\ell-3}x\mapsto 0^{2\ell-3}y$ with~$x$ and~$y$ as in parts~(i) or~(ii) of Lemma~\ref{lem:long} as a \emph{heavy rotation}, or a \emph{light rotation}, respectively.
Moreover, we refer to the subtree $0^{2\ell-3}\,1^{2\ell-3}\,0\,1$ of any tree from~$\cT_{n,\ell}$ as its \emph{spine}.
Combinatorially, a heavy rotation is an inverse rotation at the root, plus a rotation of every second subtree attached to the spine.
A light rotation moves all subtrees attached to the spine one spine vertex to the right, and it also applies a rotation to every second such subtree.
These operations define an equivalence relation on~$\cT_{n,\ell}$ whose equivalence classes correspond to long cycles.
Unfortunately, in general there seems to be no `nice' combinatorial interpretation of these equivalence classes, unless in some special cases like $\ell\in\{1,2,m+1\}$; recall the remarks from the end of Section~\ref{sec:short}.
The number of long cycles determined experimentally for small values of~$m$ and~$\ell$ can be found in Table~\ref{tab:2factors}.

\section{Flipping 4-cycles}
\label{sec:flip4}

In this section we describe how to modify the cycle factor~$\cC_{n,\ell}$ (where $n=2m+1$, $m\geq 1$, $2\le \ell\le m+1$), so that every short cycle is joined to some long cycle.
The remaining task, solved in the next sections, will then be to join the long cycles to a single Hamilton cycle.
The modifications via 4-cycles exploit relations between pairs of short chains of the Greene-Kleitmann SCD that were first used by Griggs, Killian, and Savage~\cite{MR2034416}, and by Killian, Ruskey, Savage, and Weston~\cite{MR2114190} with the purpose of constructing symmetric Venn diagrams.

A \emph{flipping $4$-cycle} between two vertex-disjoint paths $P,P'$ is a $4$-cycle $F$ that shares exactly one edge with each of the two paths.
Note that the symmetric difference of the edge sets $(P\cup P')\bigtriangleup F$ gives path $Q,Q'$ on the same vertex set with flipped end vertices:
Specifically, if $P,P'$ are $xy,x'y'$-paths, then $Q,Q'$ are $xy',x'y$-paths.
Thus, if $P,P'$ are subpaths of two cycles $C,C'$, then $(C\cup C')\bigtriangleup F$ is a single cycle.

\begin{figure}
\includegraphics{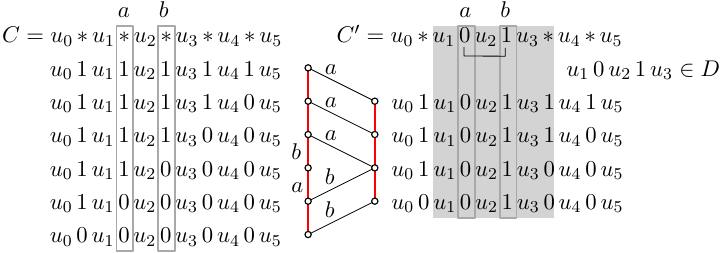}
\caption{Flipping $4$-cycles between a chain~$C$ and one of its children~$C'$.
The labels~$a$ and~$b$ along edges denote positions of flipped bits.
\label{fig:4cyc}
}
\end{figure}

Given a chain $C$ of length~$h$, we say that a chain~$C'$ of length~$h-2$ is a \emph{child} of $C$, if $C'$ is obtained from~$C$ by replacing two consecutive *s in~$C$ by~0 and~1, respectively; see Figure~\ref{fig:4cyc}.
We refer to this as \emph{matching two *s in~$C$}.
We also call~$C$ a \emph{parent} of~$C'$.
Note that $C$ has exactly $h-1$ many children, obtained by matching any two consecutive of the $h$ many *s in total.
Moreover, the number of parents of~$C'$ is given by the number of outermost matched 01-pairs in~$C'$, which can be up to~$(n-(h-2))/2$.
A straightforward verification (see Figure~\ref{fig:4cyc} for an example) yields the following lemma.

\begin{lemma}
\label{lem:4cyc}
For every chain~$C$ of length~$h$ and its child~$C'$ obtained by matching two *s at positions $a,b$, there are exactly $h-2$ many flipping $4$-cycles between~$C$ and~$C'$, each using a distinct edge of~$C$ and~$C'$, except the two consecutive edges of~$C$ that flip the coordinates~$a$ and~$b$.
\end{lemma}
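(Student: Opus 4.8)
\emph{Proof plan.} The idea is to make everything explicit via the chain factorizations~\eqref{eq:C} and~\eqref{eq:xi}, and then reduce the statement to an elementary count. Write $C=u_0\,{*}\,u_1\,{*}\cdots{*}\,u_h$ with $u_0,\dots,u_h\in D$, and assume the child~$C'$ is obtained by matching the $j$th and $(j{+}1)$st stars of~$C$, counted from the left starting at~$1$; so $a$ and $b$ are the string positions of these two stars, and since $u_{j-1}\,0\,u_j\,1\,u_{j+1}\in D$ we get
\[C'=u_0\,{*}\cdots{*}\,u_{j-2}\,{*}\,\bigl(u_{j-1}\,0\,u_j\,1\,u_{j+1}\bigr)\,{*}\,u_{j+2}\,{*}\cdots{*}\,u_h,\]
a chain of length~$h-2$. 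Denote by $x_0,\dots,x_h$ the vertices of~$C$ in order of increasing level as in~\eqref{eq:xi}, and by $x'_0,\dots,x'_{h-2}$ those of~$C'$. The edges of~$C$ are $\{x_i,x_{i+1}\}$ for $0\le i<h$, and the two ``excepted'' edges of the lemma are $\{x_{j-1},x_j\}$, which flips coordinate~$a$, and $\{x_j,x_{j+1}\}$, which flips coordinate~$b$; these are consecutive, sharing~$x_j$.

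The first step is to list \emph{all} edges of~$Q_n$ between $V(C)$ and~$V(C')$. Since the two chains agree outside their star positions, comparing~\eqref{eq:xi} for $C$ and for~$C'$ bit by bit should yield exactly the cross-edges
\[\{x_t,x'_t\}\ \ (0\le t\le j-1),\ \text{obtained by flipping coordinate }b,\qquad \{x_{t+2},x'_t\}\ \ (j-1\le t\le h-2),\ \text{obtained by flipping coordinate }a.\]
In particular $x'_{j-1}$ lies on two of these cross-edges, namely $\{x_{j-1},x'_{j-1}\}$ and $\{x_{j+1},x'_{j-1}\}$; the vertex~$x_j$ lies on none; and every other vertex of $V(C)\cup V(C')$ lies on exactly one. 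Thus the bipartite graph~$B$ on $V(C)\cup V(C')$ formed by the cross-edges is a disjoint union of single edges together with one path $x_{j-1}\,x'_{j-1}\,x_{j+1}$ of length~$2$; so $B$ has no cycles and every vertex of~$C$ has at most one neighbour in~$B$.

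The second step is to count flipping $4$-cycles. A flipping $4$-cycle~$F$ meets $C$ and~$C'$ in exactly one edge each, say $e$ and~$e'$; since $C$ and~$C'$ are distinct chains of the Greene--Kleitman SCD and hence vertex-disjoint, $e$ and~$e'$ must be a pair of opposite (non-adjacent) edges of~$F$, and the remaining two edges of~$F$ are cross-edges joining the endpoints of~$e$ to those of~$e'$. Hence $F$ is specified by an edge $\{x_i,x_{i+1}\}$ of~$C$ both of whose endpoints have a $B$-neighbour, those two $B$-neighbours being consecutive vertices of~$C'$; and as each vertex of~$C$ has at most one $B$-neighbour, the cross-edges are then forced, so each such $C$-edge yields exactly one flipping $4$-cycle. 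Running through $i=0,\dots,h-1$: for $i\le j-2$ we get the $4$-cycle through $x_i,x_{i+1},x'_{i+1},x'_i$ (in this cyclic order); the edges $\{x_{j-1},x_j\}$ and $\{x_j,x_{j+1}\}$ yield nothing since $x_j\notin V(B)$; the edge $\{x_{j+1},x_{j+2}\}$ (present when $j\le h-2$) yields the $4$-cycle through $x_{j+1},x_{j+2},x'_j,x'_{j-1}$; and for $i\ge j+2$ we get the $4$-cycle through $x_i,x_{i+1},x'_{i-1},x'_{i-2}$. This gives $(j-1)+[\,j\le h-2\,]+\max(0,h-j-2)=h-2$ flipping $4$-cycles, with the boundary ranges $j=1$ and $j=h-1$ (and the tiny cases $h\le 3$) checked separately; they use pairwise distinct edges of~$C$ — all of them except $\{x_{j-1},x_j\}$ and $\{x_j,x_{j+1}\}$ — and, collecting the three families, each of the $h-2$ edges of~$C'$ exactly once.

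The computations in both steps are routine; the only subtle point — the one place the count could go wrong — is the ``extra'' cross-edge $\{x_{j+1},x'_{j-1}\}$. Without it, $x'_{j-1}$ and $x_{j+1}$ would have no partner, one would obtain only $h-3$ flipping $4$-cycles, and the $C$-edge $\{x_{j+1},x_{j+2}\}$ would be unusable, contradicting the claim. So I expect the main obstacle to be simply carrying out Step~1 carefully enough to see precisely which vertices acquire a second neighbour across (only $x'_{j-1}$ does), and then dispatching the degenerate star-index ranges without off-by-one slips.
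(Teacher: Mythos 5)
Your proposal is correct and amounts to carefully carrying out exactly the ``straightforward verification'' that the paper delegates to Figure~\ref{fig:4cyc}: you explicitly enumerate the cross-edges between $V(C)$ and $V(C')$ from the chain factorization, observe that only $x'_{j-1}$ acquires two cross-neighbours while $x_j$ acquires none, and then read off the unique flipping $4$-cycle through each $C$-edge other than the two incident with~$x_j$. The count, the edge-disjointness on both $C$ and $C'$, and the identification of the two excluded $C$-edges with the flips of coordinates $a$ and $b$ all check out, including the boundary indices $j\in\{1,h-1\}$. This is the same approach the paper takes, just written out in full rather than illustrated by example.
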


The preceding lemma provides us enough options for selecting multiple edge-disjoint flipping 4-cycles as follows.

\begin{lemma}
\label{lem:4cyc-local}
For every chain~$C$ of length~$h\ge 5$ and any edge~$f$ of $C$, there are $h-1$ edge-disjoint flipping $4$-cycles between~$C$ and each of its $h-1$ children that all avoid the edge~$f$.
\end{lemma}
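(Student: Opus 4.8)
The plan is to leverage the counting from Lemma~\ref{lem:4cyc}. Fix a chain~$C$ of length~$h\geq 5$ and one of its~$h-1$ children. By Lemma~\ref{lem:4cyc}, between~$C$ and that child there are exactly~$h-2$ flipping $4$-cycles, and they use \emph{distinct} edges of~$C$ (all edges of~$C$ except the two consecutive ones that flip the matched coordinates), and also distinct edges of the child. So for a single child, we have $h-2$ candidate flipping $4$-cycles, of which at most one can use any prescribed edge~$f$ of~$C$; discarding that one leaves at least~$h-3\geq 2$ flipping $4$-cycles between~$C$ and that child that avoid~$f$. The real issue is not finding \emph{one} such $4$-cycle per child, but choosing them simultaneously across all $h-1$ children so that the chosen $4$-cycles are pairwise edge-disjoint — in particular, no edge of~$C$ is reused.

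First I would set up the combinatorial core as a bipartite matching / system of distinct representatives problem. Label the edges of~$C$ by~$1,\dots,h$ (edge~$j$ flips the coordinate at the $j$th~$*$). For the $j$th child~$C'_j$, obtained by matching the $*$s at positions~$j$ and~$j+1$ (here $j=1,\dots,h-1$), Lemma~\ref{lem:4cyc} tells us that the $h-2$ flipping $4$-cycles between~$C$ and~$C'_j$ use exactly the $h-2$ edges of~$C$ in $\{1,\dots,h\}\setminus\{j,j+1\}$, one per edge. Thus assigning to child~$C'_j$ an edge of~$C$ to be used by its flipping $4$-cycle amounts to picking an element of~$S_j:=\{1,\dots,h\}\setminus\{j,j+1\}$, and once that edge is fixed the flipping $4$-cycle is uniquely determined (it also pins down a distinct edge of~$C'_j$, so edge-disjointness among the children's own chain edges is automatic since the children are pairwise distinct chains — one should double-check the $4$-cycles share no cube edge other than the ones on~$C$ and on the~$C'_j$, but the two ``diagonal'' edges of each flipping $4$-cycle go between~$C$ and~$C'_j$, hence between different level pairs for different~$j$, so they cannot collide). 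So the whole statement reduces to: the set system $\{S_1,\dots,S_{h-1}\}$ together with the forbidden edge~$f$ admits a system of distinct representatives avoiding~$f$; equivalently, $\{S_1\setminus\{f\},\dots,S_{h-1}\setminus\{f\}\}$ has an SDR.

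Next I would verify Hall's condition for $\{S_j\setminus\{f\}\}_{j=1}^{h-1}$, which is a short finite check. Each $S_j\setminus\{f\}$ has size at least~$h-3\geq 2$. For any subset $J\subseteq\{1,\dots,h-1\}$ one needs $|\bigcup_{j\in J}(S_j\setminus\{f\})|\geq |J|$. If $|J|\le h-3$ this is immediate since even a single~$S_j\setminus\{f\}$ already has $\geq h-3$ elements. If $|J|\in\{h-2,h-1\}$, note that the complements $\{1,\dots,h\}\setminus S_j=\{j,j+1\}$ are intervals of length~$2$, so $\bigcup_{j\in J}S_j$ misses only those coordinates covered by \emph{every} $\{j,j+1\}$, $j\in J$; since $|J|\geq 2$ and the pairs $\{j,j+1\}$ for distinct~$j$ have pairwise intersection of size at most~$1$, no coordinate lies in all of them once $|J|\ge 3$, and for $|J|=2$ at most one coordinate is missed — so $|\bigcup_{j\in J}S_j|\geq h-1$, and removing~$f$ leaves $\geq h-2\geq |J|$. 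Hence Hall's condition holds, an SDR exists, and translating it back via Lemma~\ref{lem:4cyc} yields~$h-1$ pairwise edge-disjoint flipping $4$-cycles, one between~$C$ and each child, all avoiding~$f$.

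The main obstacle I anticipate is purely bookkeeping rather than conceptual: making sure the ``distinct edge of~$C$'' used by the flipping $4$-cycle between~$C$ and~$C'_j$ really does range over all of $S_j$ (not some other $(h-2)$-subset), which is exactly what the ``except the two consecutive edges that flip coordinates~$a$ and~$b$'' clause of Lemma~\ref{lem:4cyc} asserts; and verifying that two flipping $4$-cycles belonging to \emph{different} children, even if assigned different edges of~$C$, genuinely share no cube edge — this needs the observation that the only edges a flipping $4$-cycle between~$C$ and~$C'_j$ has on level-pairs internal to~$C$ or~$C'_j$ are one edge of~$C$ and one edge of~$C'_j$, while its other two edges cross between the chain~$C$ and the chain~$C'_j$ and hence are determined by the pair~$(C,C'_j)$, so they cannot be shared with a $4$-cycle built from~$(C,C'_{j'})$ for $j'\ne j$. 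Once these local facts are in place, the SDR argument closes the proof cleanly.
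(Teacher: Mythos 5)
Your proof is correct but takes a genuinely different route from the paper's. Both arguments reduce the lemma to the same bipartite-matching core: the $j$th child may be served by any edge of~$C$ in $S_j=\{1,\ldots,h\}\setminus\{j,j+1\}$, and one must pick distinct representatives avoiding~$f$. The paper resolves this constructively, exhibiting the explicit bijection $e(i):=\mu(i+2,h)$ (or $\mu(i+3,h)$ if the first value is not below the index~$p$ of~$f$) onto $\{1,\ldots,h\}\setminus\{p\}$ and checking $e(i)\notin\{i,i+1\}$; you instead invoke Hall's theorem for the family $\{S_j\setminus\{f\}\}$, with a clean union-size check (for $|J|\geq 3$ the complements $\{j,j+1\}$ have empty intersection, so the union is everything). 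Both are valid; your SDR argument is quicker to verify and more robust to perturbations of the set system, while the paper's closed formula is in the spirit of its later algorithmic goals. One small misstatement worth fixing: you claim the two ``diagonal'' edges of a flipping $4$-cycle live ``between different level pairs for different~$j$'' --- this is not true, since the level pair is determined by which edge of~$C$ the $4$-cycle uses, not by which child is involved, and two $4$-cycles for different children can sit on the same level pair. The disjointness of diagonal edges across children actually holds for the reason you give a few lines later: each such edge has one endpoint on~$C'_j$, and distinct chains of the SCD are vertex-disjoint, so a diagonal edge of the $(C,C'_j)$ cycle cannot coincide with a diagonal edge of the $(C,C'_{j'})$ cycle for $j'\neq j$, nor with any edge lying entirely on~$C$ or on some child chain.
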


We can think of~$f$ as a forbidden edge that will be used to connect~$C$ further to one of its parent chains.

\begin{proof}
We may assume w.l.o.g.\ that $C=*^h$, as any other chain of length~$h$ is obtained from this chain by inserting valleys between and around the~*s; recall~\eqref{eq:C}.
We label the edges on~$C$ by $1,\ldots,h$ from bottom to top, and we let $p\in \{1,\ldots,h\}$ be the index of the edge~$f$ to be avoided.
We label all $h-1$ children of~$C$ from~1 to~$h-1$, such that the $i$th child is obtained by matching the $i$th and $(i+1)$st $*$ in~$C$.
To the $i$th child, $1\le i\le h-1$, we assign the edge
\[e(i):= \begin{cases}
\mu(i+2,h) & \text{if } \mu(i+2,h)<p,\\
\mu(i+3,h) & \text{otherwise},
\end{cases}\]
on~$C$, where $\mu(x,h)$ is the representative of the residue class of~$x$ modulo~$h$ from the set $\{1,\ldots,h\}$.
Since $e(i)\notin \{i,i+1\}$ by the definition of~$e(i)$ and the assumption~$h\geq 5$, by Lemma~\ref{lem:4cyc} there is a flipping 4-cycle between~$C$ and its $i$th child that uses the edge~$e(i)$ on~$C$.
Moreover, as the mapping $e \colon \{1,\dots, h-1\} \to \{1, \dots, p-1,p+1,\dots, h\}$ is a bijection, these flipping 4-cycles for all $1\le i\le h-1$ are edge-disjoint, and they all avoid~$f$.
\end{proof}

Recall that by Lemma~\ref{lem:short}, short cycles correspond to $[--]$-chains of length $3 \le h \le 2\ell-3$.
For each short cycle represented by such a chain~$C$ of length~$h$, we now specify a chain~$g(C)$ on the same cycle, called a \emph{gluing chain}, and we also specify a parent chain~$p(g(C))$ of~$g(C)$ that belongs to a cycle of a bigger range; that is, a short cycle with a $[--]$-chain of length~$h+2$ or a long cycle.
These definitions are illustrated in Figure~\ref{fig:pgfunc}.
Specifically, for a short cycle represented by a $[--]$-chain $C={*}\,u_1\,{*}\cdots u_{h-1}\,{*}$ of length $3\le h \le 2\ell-3$ we define
\begin{equation}
\label{eq:g}
g(C):=\begin{cases}\ C &\text{if }h<2\ell-3,\\
\ a\,{*}\,u_3\,{*}\cdots u_{h-1}\,{*} &\text{if } h=2\ell-3 \text{ and }u_1=\varepsilon,\\
\ {*}\,u_1\,{*}\cdots u_{h-3}\,{*}\,b &\text{if } h=2\ell-3 \text{ and }u_1\ne\varepsilon,
\end{cases}
\end{equation}
where $a:=0\,u_1\,1\,u_2\in D$ and $b:=u_{h-2}\,0\,u_{h-1}\,1\in D$.
Note that in the second case, $g(C)$ is a $[+-]$-chain of length~$h-2$, and in the third case, $g(C)$ is a $[-+]$-chain of length~$h-2$.
By Lemma~\ref{lem:short}, in each case the chain~$g(C)$ belongs to the same short cycle as $C$.
To define $p(g(C))$, let $0<i<h$ be the smallest index such that $u_i\ne \varepsilon$.
We let $p(g(C))$ be the parent of $g(C)$ obtained by replacing the leftmost matched 01-pair in~$u_i$ by two *s.
That is, given the left factorization $u_i=0\,v\,1\,w$ with $v,w\in D$, then $p(g(C))$ is obtained from~$g(C)$ by replacing $u_i$ with ${*}\,v\,{*}\,w$.
Observe that in all three cases of~\eqref{eq:g}, the chain $p(g(C))$ has the same type as $g(C)$ ($[--]$, $[+-]$, or $[-+]$, respectively).
As a consequence of Lemmas~\ref{lem:short} and~\ref{lem:long}, the chain $p(g(C))$ therefore belongs to a short cycle of bigger range if $h<2\ell-3$ and to a long cycle if $h=2\ell-3$.

\begin{figure}
\includegraphics{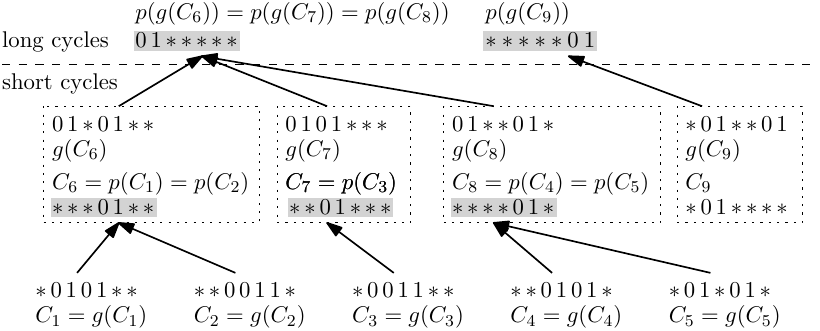}
\caption{For $n=2m+1=7$ and $\ell=4$ we have 9 short cycles represented by the $[--]$-chains $C_1,\dots,C_9$ of length~3 or $5=2\ell-3$.
The dotted boxes indicate chains~$C_i$ with $C_i\neq g(C_i)$, but with~$C_i$ and~$g(C_i)$ on the same short cycle.
Arrows connect $g(C_i)$ with~$p(g(C_i))$.
The top two chains are contained in a long cycle (in this case, in the same cycle).
The roots of the trees mentioned in the proof of Lemma~\ref{lem:4cyc-global} are highlighted in gray.
\label{fig:pgfunc}
}
\end{figure}

We now show that flipping 4-cycles between all gluing chains and their selected parents can be chosen to be pairwise edge-disjoint.

\begin{lemma}
\label{lem:4cyc-global}
There is a set $\cF_{n,\ell}$ of pairwise edge-disjoint flipping 4-cycles, each between~$g(C)$ and~$p(g(C))$ for all short cycles~$C$ of the cycle factor $\cC_{n,\ell}$ defined in~\eqref{eq:cfac}.
\end{lemma}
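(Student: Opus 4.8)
\textbf{Proof plan for Lemma~\ref{lem:4cyc-global}.}

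The plan is to exhibit the required flipping 4-cycles one short cycle at a time, while controlling edge-disjointness across different short cycles by means of a forest structure on the chains. First I would organize the short cycles (equivalently, the $[--]$-chains $C$ of length $3\le h\le 2\ell-3$) together with the long cycles into a forest $T$: make each long cycle a root, and put an arc from the short cycle of $C$ to the cycle containing $p(g(C))$. By the remarks following the definition of $p(g(C))$ (a consequence of Lemmas~\ref{lem:short} and~\ref{lem:long}), $p(g(C))$ always lies on a cycle of strictly bigger range, so this relation is acyclic and indeed defines a forest rooted at the long cycles; see Figure~\ref{fig:pgfunc}. The flipping 4-cycle we must choose for $C$ will use one edge on $g(C)$ and one edge on the parent chain $p(g(C))$. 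The key point is that each short cycle $C$ receives \emph{at most one} incoming 4-cycle on the ``parent side'' for every child short cycle pointing to it, but each such 4-cycle consumes an edge of one of the \emph{chains on $C$'s own cycle}, while $C$'s own outgoing 4-cycle consumes an edge of $g(C)$ (a chain on $C$'s cycle) and an edge of $p(g(C))$ (a chain on the parent's cycle). So the edges at risk of collision are: (a) the edges of chains on a given short cycle, which may be hit by that cycle's own outgoing 4-cycle and by the incoming 4-cycles from all its children; and (b) the edges of chains on a long cycle, hit only by incoming 4-cycles from its children. This is exactly the situation Lemma~\ref{lem:4cyc-local} is designed for.

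Concretely, I would process the forest $T$ from the leaves toward the roots. Fix a short cycle $C$ (a $[--]$-chain of length $h$), with gluing chain $g(C)$ and designated parent $p(g(C))$. By Lemma~\ref{lem:short}, the chains lying on $C$'s cycle are $C$ itself of length $h$, a $[+-]$-chain and a $[-+]$-chain of length $h-2$, and (if $h\ge 5$) a $[++]$-chain of length $h-4$; in particular $g(C)$ is one of these, and it has length $h$ or $h-2\ge 3$. The 4-cycles coming \emph{into} $C$'s cycle from its children each need a free edge on one of these chains; since a chain of length $\ge 3$ has $\ge 3$ edges and, by the structure in Lemma~\ref{lem:4cyc}, the 4-cycles between a chain and a fixed child avoid only two specified consecutive edges, there is ample room provided the number of children pointing into $C$ is not too large. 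Here I would use the combinatorial fact underlying the definition of $p$: the number of children of a fixed chain is $h-1$ (one per pair of consecutive $*$s), but more importantly the number of \emph{parents} chosen pointing at a fixed chain is bounded by its number of ``outermost matched $01$-pairs'', and by the specific choice of $p(g(C))$ (replace the leftmost matched $01$-pair of the first nonempty valley) each target chain is hit by a bounded number of children, small enough that a system-of-distinct-representatives / explicit-rotation argument as in the proof of Lemma~\ref{lem:4cyc-local} supplies the needed pairwise-distinct edges. The single ``forbidden edge'' $f$ in Lemma~\ref{lem:4cyc-local} is precisely the edge reserved for $C$'s own outgoing 4-cycle on $g(C)$: we first pick that outgoing 4-cycle (choosing on $p(g(C))$ an edge still available there), then apply Lemma~\ref{lem:4cyc-local} with $f$ being the chosen edge on $g(C)$ to fit all incoming 4-cycles from $C$'s children using the remaining edges of the chains on $C$'s cycle.

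Assembling this bottom-up: maintain, for every cycle in the factor, the set of edges of its chains already ``used''; when visiting a short cycle $C$, first allocate $C$'s outgoing 4-cycle — one free edge on $g(C)$ and one free edge on $p(g(C))$ (the latter exists because the parent chain is long enough and only a bounded number of its edges are used) — then invoke Lemma~\ref{lem:4cyc-local}/Lemma~\ref{lem:4cyc} to route all incoming 4-cycles from $C$'s children onto the still-free edges of the $O(1)$ many chains on $C$'s cycle, all avoiding the edge just used on $g(C)$. Because distinct short cycles touch disjoint sets of chains on the ``child side'' and the ``parent side'' bookkeeping is done globally, the union $\cF_{n,\ell}$ of all these 4-cycles is pairwise edge-disjoint. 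The main obstacle — and the place where the argument needs genuine care rather than routine checking — is the counting/packing step: verifying that for every target chain the number of children of the short-cycle forest that select it as $p(g(\cdot))$, plus the one edge reserved on it as a parent, never exceeds the number of available edges on that chain (taking into account the two forbidden consecutive edges per child from Lemma~\ref{lem:4cyc} and the forbidden edge $f$). This requires a precise description of which chains can be $g(C)$ and $p(g(C))$ — via the valley structure in \eqref{eq:C}, \eqref{eq:g}, and the left-factorization choice of $p(g(C))$ — together with the observation that the map $C\mapsto g(C)$ is injective on short cycles (Lemma~\ref{lem:short}) and that $g(C)$ always has length $\ge 3$, so Lemma~\ref{lem:4cyc-local}'s hypothesis $h\ge 5$ is met on the relevant chains after passing to $p(g(C))$, whose length is $2$ more. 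Once this bound is in hand, the rest follows by the explicit rotation construction already used for Lemma~\ref{lem:4cyc-local}.
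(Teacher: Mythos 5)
The high-level idea — impose a forest structure on the short-cycle relation $C \mapsto p(g(C))$ and invoke Lemma~\ref{lem:4cyc-local} to route multiple edge-disjoint flipping 4-cycles at each node — is the same as the paper's. But you place the forest on \emph{cycles}, whereas the paper places it on \emph{chains}, and this is exactly what lets the paper sidestep the counting step you flag as the main obstacle.

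Here is the issue with your formulation. By building the forest on cycles, you have to track, for each short cycle, which of the (up to four) chains on it absorbs each incoming 4-cycle from a child, and you then need an upper bound on how many children select the same target chain $p(g(\cdot))$. You acknowledge this as a ``genuine care'' step and leave it open, gesturing at the number of outermost matched 01-pairs and an SDR argument. But no such bound is necessary if you instead define the directed graph directly on short chains, with one arc $g(C) \to p(g(C))$ per short cycle $C$, as the paper does. Then the arc-children of any fixed chain~$C^\ast$ of length~$h$ are a \emph{subset} of the $h-1$ graph-theoretic children of $C^\ast$ (chains obtained by matching two consecutive $*$s), and Lemma~\ref{lem:4cyc-local} already supplies $h-1$ pairwise edge-disjoint flipping 4-cycles to \emph{all} $h-1$ of them, avoiding any prescribed edge~$f$. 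So the packing constraint is satisfied automatically with no counting: the number of children that can demand an edge of $C^\ast$ is bounded by $h-1$ by construction, and that is precisely the number of 4-cycles Lemma~\ref{lem:4cyc-local} provides.

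Your bottom-up (leaves-to-roots) sweep is also what forces the extra bookkeeping: when you allocate $C$'s outgoing 4-cycle you must already know that the edge it claims on $p(g(C))$ is still free, which is what your packing lemma would have to certify. The paper's top-down sweep avoids this: at the moment a chain $C^\ast$ is processed, the one edge $f$ of $C^\ast$ already committed (by the 4-cycle chosen at its tree-parent) is known, and Lemma~\ref{lem:4cyc-local} is applied with exactly that $f$ as the forbidden edge. Each 4-cycle selected then commits one edge on a distinct child chain, which becomes the forbidden edge for that child when the recursion descends. The only additional case to treat is the degenerate one $\ell = 3$, where $g(C)$ can have length~$1$; there the arc $g(C) \to p(g(C))$ is an entire tree by itself and a single application of Lemma~\ref{lem:4cyc} suffices. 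So your proposal is not wrong in spirit, but as written it has a real gap in the packing step; switching the forest from cycles to chains and processing top-down closes that gap and makes the lemma follow directly from Lemmas~\ref{lem:4cyc} and~\ref{lem:4cyc-local}.
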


In this lemma we identify a short cycle~$C$ with its corresponding $[--]$-chain of length $3\le h\le 2\ell-3$.

\begin{proof}
We consider the directed graph on all short chains as nodes and arcs from~$g(C)$ to~$p(g(C))$ for all short cycles~$C$, and in this graph we consider all nontrivial components; see Figure~\ref{fig:pgfunc}.
As the chain~$p(g(C))$ is longer than~$g(C)$, these components are trees that are oriented towards a set of roots, and by the definition of the mappings~$g$ and~$p$, these roots are $[--]$-, $[+-]$- and $[-+]$-chains of length~$2\ell-3$.
We select one flipping 4-cycle into~$\cF_{n,\ell}$ for every arc in each of these trees, and the selection is done by processing each tree independently, starting at the root and descending towards its leaves, along parent-child pairs of chains.
We show that this can be done so that all selected 4-cycles are pairwise edge-disjoint.
Clearly, these 4-cycles can only share an edge on a chain~$g(C)$ or~$p(g(C))$ for some short cycle~$C$.

First, observe that $g(C)$ has length~1 only if $\ell=3$ and $C$ has length~$h=3=2\ell-3$.
In this case the arc from~$g(C)$ to~$p(g(C))$ is an entire tree, so we can take the single flipping 4-cycle between~$g(C)$ and~$p(g(C))$, which exists by Lemma~\ref{lem:4cyc}, into $\cF_{n,\ell}$.

For the rest of the proof we assume that $\ell\geq 4$, so the nodes of all trees are chains of length at least~3, and all interior nodes are chains of length at least~5.
Let $C$ denote the current chain, which is the root of some currently unprocessed subtree, and let $h\ge 5$ be its length.
By Lemma~\ref{lem:4cyc-local}, we may select edge-disjoint flipping 4-cycles to all children of~$C$ in the tree, one for each incoming arc to~$C$, all avoiding the edge~$f$ of~$C$ that was previously chosen for a flipping 4-cycle between~$C$ and~$p(C)$.
We add to the set~$\cF_{n,\ell}$ all those 4-cycles, and we proceed recursively in each subtree below~$C$.
\end{proof}

\section{Flipping 6-cycles}
\label{sec:flip6}

\begin{figure}[b!]
\includegraphics{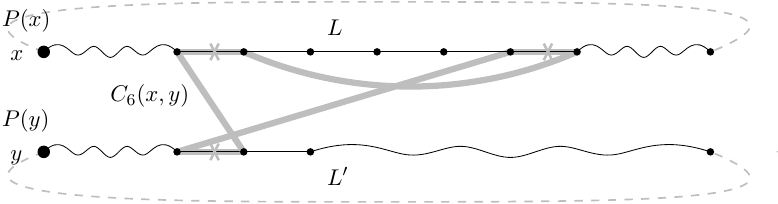}
\caption{
Two long cycles $L,L'$ from the cycle factor $\cC_{n,\ell}$ joined by taking the symmetric difference with a flipping 6-cycle.
The paths~$P(x)$ and~$P(y)$ from the set~$Y_\ell$ (solid black) lying on the two long cycles traverse the 6-cycle~$C_6(x,y)$ (solid gray) as shown.
The symmetric difference yields paths~$P'(x)$ and~$P'(y)$ that have flipped end vertices.
}
\label{fig:c6xy}
\end{figure}

In this section we define certain 6-cycles between the top two levels in~$Q_{n,\ell}$ (where $n=2m+1$, $m\geq 1$, $2\le \ell\le m+1$); that is, between levels~$m+\ell-1$ and~$m+\ell$ in~$Q_n$, that can be used to join pairs of long cycles from the cycle factor~$\cC_{n,\ell}$ defined in~\eqref{eq:cfac}.
Recall that by Lemma~\ref{lem:Ypaths}, in the top two levels the cycle factor~$\cC_{n,\ell}$ consists of paths formed by the edges in~$Y_\ell$, and these paths can be identified by their first vertices $C^+_{2\ell-3,2\ell-3}$.
In the following, whenever referring to paths in~$Y_\ell$ we mean maximal paths, i.e., the components formed by the edges in~$Y_\ell$ (and not proper subpaths of these).

Recall that a flipping 4-cycle for a pair of paths has one edge in common with each of these paths, and taking the symmetric difference of the edge sets of the paths and the 4-cycle yields two paths on the same vertex set with flipped end vertices.
A flipping 6-cycle works very similarly, albeit its intersection pattern with the two paths is more complicated; see Figure~\ref{fig:c6xy}.
Specifically, such a flipping 6-cycle shares \emph{two} non-incident edges with one of the paths, and one edge with the other path.
The precise conditions are stated in Lemma~\ref{lem:6cyc} below.
Let us emphasize that the flipping 6-cycles we use are by definition edge-disjoint with all flipping 4-cycles considered in the previous section, as the 4-cycles are all between levels~$m-\ell+2$ and~$m+\ell-1$ of~$Q_n$, so none of them uses any edges from the top two levels.

We say that two vertices $x,y \in C^+_{2\ell-3,2\ell-3}$ form a \emph{flippable pair} $(x,y)$, if~$0^{2\ell-3}x$ and~$0^{2\ell-3}y$ have the form
\begin{equation}
\label{eq:xy}
\begin{aligned}
  0^{2\ell-3}x &= u_0'\,0\,u_1\cdots 0\,u_d\,0\,u_{d+1}\,0\,1\,1\,v_d\,1\cdots v_2\,1\,v_1\,1, \\
  0^{2\ell-3}y &= u_0'\,0\,u_1\cdots 0\,u_d\,0\,u_{d+1}\,1\,0\,1\,v_d\,1\cdots v_2\,1\,v_1\,1
\end{aligned}
\end{equation}
with~$d\geq 0$ and $u_0',u_1,\ldots,u_{d+1},v_1,\ldots,v_d\in D$.

\begin{wrapfigure}{r}{0.55\textwidth}
\includegraphics{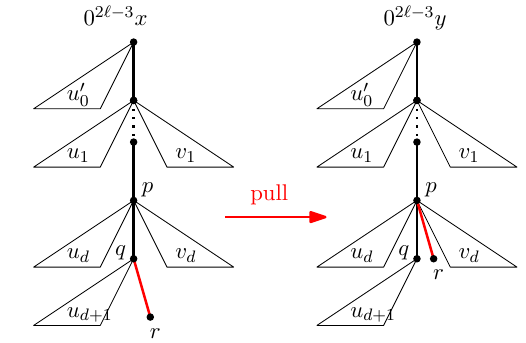}
\caption{The pull operation on rooted trees.}
\label{fig:pull}
\end{wrapfigure}
Recall from Section~\ref{sec:struc} that~$0^{2\ell-3}x$ and~$0^{2\ell-3}y$ can be viewed as rooted trees from~$\cT_{n,\ell}$, where all but one spine edge is contained in the subtree~$u_0'$.
If~$(x,y)$ is a flippable pair, then the tree~$0^{2\ell-3}y$ is obtained from the tree~$0^{2\ell-3}x$ by moving a pending edge from a vertex in the rightmost subtree to its predecessor.
Specifically, the pending edge~$(q,r)$ must form the rightmost subtree of a vertex~$q$ in the rightmost subtree of $0^{2\ell-3}x$, and this edge is removed from $q$ and reattached to the predecessor~$p$ of~$q$ to become the subtree directly right of the edge~$(p,q)$.
We refer to this as a \emph{pull operation}; see Figure~\ref{fig:pull}.

Any 6-cycle between the top two levels of~$Q_{n,\ell}$ can be uniquely encoded as a string of length~$n$ over $\{0,1,*\}$ with $m+\ell-2$ many~1s, $m-\ell$ many~0s and three~$*$s.
The 6-cycle corresponding to this string is obtained by substituting the three $*$s by all six combinations of at least two different symbols from~$\{0,1\}$.
For any flippable pair~$(x,y)$ as in~\eqref{eq:xy}, let $u_0$ be such that $u_0'=0^{2\ell-3}u_0$, and define the \emph{flipping 6-cycle}
\begin{equation}
\label{eq:c6xy}
  C_6(x,y):= u_0\,1\,u_1\cdots 1\,u_d\,{*}\,u_{d+1}\,{*}\,{*}\,1\,0\,v_d\,0\cdots v_2\,0\,v_1.
\end{equation}
Moreover, we let $\cS_{n,\ell}$ denote the set of all those 6-cycles, obtained as the union of~$C_6(x,y)$ over all flippable pairs~$(x,y)$.

The following result was proved in~\cite[Proposition~3]{gregor-muetze-nummenpalo:18}.
For any $x\in C_{2\ell-3,2\ell-3}^+$, we write $P(x)$ for the path from~$Y_\ell$ that starts at the vertex~$x$.

\begin{lemma}
\label{lem:6cyc}
The 6-cycles $C_6(x,y)\in \cS_{n,\ell}$ defined in~\eqref{eq:c6xy} have the following properties:
\begin{enumerate}[label=(\roman*)]
\item
Let~$(x,y)$ be a flippable pair.
The 6-cycle $C_6(x,y)$ intersects~$P(x)$ in two non-incident edges and it intersects~$P(y)$ in a single edge.
Moreover, the symmetric difference of the edge sets of the two paths $P(x)$ and $P(y)$ with the 6-cycle~$C_6(x,y)$ gives two paths~$P'(x)$ and~$P'(y)$ on the same set of vertices as~$P(x)$ and~$P(y)$, connecting~$x$ with the last vertex of~$P(y)$, and $y$ with the last vertex of~$P(x)$, respectively.

\item
For any flippable pairs~$(x,y)$ and~$(x',y')$, the 6-cycles~$C_6(x,y)$ and~$C_6(x',y')$ are edge-disjoint.

\item
For any flippable pairs~$(x,y)$ and~$(x,y')$, the two pairs of edges that the two 6-cycles~$C_6(x,y)$ and~$C_6(x,y')$ have in common with the path~$P(x)$ are not interleaved, but one pair appears before the other pair along the path.
\end{enumerate}
\end{lemma}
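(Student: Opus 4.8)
The plan is to transfer the whole configuration into the model setting of Lemma~\ref{lem:paths}, exactly as in the proof of Lemma~\ref{lem:Ypaths}(i): put $\alpha:=2\ell-3$ and $2d:=n+\alpha=2(m+\ell-1)$, and apply the prefixing map $\varrho_{\alpha,0}$, i.e., prepend $0^\alpha$ to every bitstring. This identifies the top two levels of~$Q_{n,\ell}$ with levels~$d$ and~$d+1$ of~$Q_{2d}$, carries $Y_\ell$ to a set of $0$- and $1$-lexical edges of~$Q_{2d}$ between these levels, carries every maximal $Y_\ell$-path~$P(x)$, $x\in C^+_{2\ell-3,2\ell-3}$, to a maximal path in the union of these matchings with first vertex $0^\alpha x\in D_{2d}$ and last vertex in~$D'_{2d}$ (Lemma~\ref{lem:paths}), and carries the $6$-cycle~$C_6(x,y)$ of~\eqref{eq:c6xy} to the $6$-cycle with $\{0,1,*\}$-code $u_0'\,1\,u_1\cdots 1\,u_d\,{*}\,u_{d+1}\,{*}\,{*}\,1\,0\,v_d\,0\cdots v_2\,0\,v_1$, where $u_0'=0^\alpha u_0$. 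All three assertions are invariant under this reduction, so I argue in~$Q_{2d}$.

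For part~(i), I would first list the six vertices of the $6$-cycle by substituting the six ``hexagon'' patterns $\{001,011,010,110,100,101\}$ into the three $*$-positions (three vertices landing on level~$d$, three on level~$d+1$), and then test each of its six edges against the definition of the $0$- and $1$-lexical matchings. One finds that exactly three of these edges lie in~$Y_\ell$, the other three being non-lexical because they flip coordinates lying too deep inside the relevant lattice path. The delicate point is to place these three lexical edges on the correct $Y_\ell$-paths: tracking the intermediate vertices of a path in the union of the $0$- and $1$-lexical matchings as in (the proof of) Lemma~\ref{lem:paths}, one checks that two of them lie on the single path~$P(x)$ and are non-incident, while the third lies on~$P(y)$; this is the step where the exact form~\eqref{eq:xy} of a flippable pair, and in particular $u_0'=0^\alpha u_0$ with $u_0\in D$ and $u_1,\dots,u_{d+1},v_1,\dots,v_d\in D$, is used. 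Granting this, the last claim of~(i) is the standard symmetric-difference bookkeeping: deleting from $E(P(x))\cup E(P(y))$ the three shared edges and inserting the other three edges of~$C_6(x,y)$ keeps every interior vertex of degree~$2$, so the result is a disjoint union of paths and cycles on the same vertex set; checking the four endpoints (namely $x$, $y$, the last vertex of~$P(x)$, and the last vertex of~$P(y)$) shows it consists of exactly two paths, joining $x$ to the last vertex of~$P(y)$ and $y$ to the last vertex of~$P(x)$.

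For part~(ii), I would prove the stronger statement that a flippable pair~$(x,y)$, together with the three $*$-positions of~$C_6(x,y)$, can be recovered from any single edge of the $6$-cycle. An edge arises by fixing two of the three $*$s and letting the third vary, so both of its endpoints display the word $u_0'\,1\,u_1\cdots 1\,u_d\,b_1\,u_{d+1}\,b_2\,b_3\,1\,0\,v_d\,0\cdots v_2\,0\,v_1$ with $b_1b_2b_3$ ranging over two adjacent hexagon patterns. Since each~$u_i$ and each~$v_j$ is a Dyck word and~$u_0'$ begins with the forced prefix~$0^\alpha$, this word should admit only one legal parsing of this shape: from the left one peels off~$0^\alpha$ and then successive maximal Dyck blocks separated by single~$1$s to read off $u_0',u_1,\dots,u_d$, and from the right one peels off the tail $1\,0\,v_d\,0\cdots v_1$ analogously, which determines~$d$, the block~$u_{d+1}$, and the unique admissible placement of the length-$3$ hexagon pattern. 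Consequently the edge sets of~$C_6(x,y)$ and~$C_6(x',y')$ are disjoint for distinct flippable pairs.

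For part~(iii), fix~$x$ and recall from Section~\ref{sec:struc} that the flippable pairs with first component~$x$ correspond to the admissible ``pull'' positions in the rightmost subtree of the tree~$0^\alpha x\in\cT_{n,\ell}$, parametrized by the index~$d$ and the decomposition $0^\alpha x=u_0'\,0\,u_1\cdots 0\,u_d\,0\,u_{d+1}\,0\,1\,1\,v_d\,1\cdots v_1\,1$ of~\eqref{eq:xy}. Using the iterated description of the part of~$P(x)$ in the top two levels (walking along the $Y_k$-paths glued by the $Z$-edges, as in Lemma~\ref{lem:YZpaths}), I would show that the unordered pair of edges that~$C_6(x,y)$ shares with~$P(x)$ moves monotonically along~$P(x)$ as this parameter grows, so that for two choices $y,y'$ the two pairs occur in disjoint intervals of~$P(x)$ and in particular are not interleaved. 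I expect parts~(ii) and~(iii) to be the real obstacle: showing that the $\{0,1,*\}$-code of a flipping $6$-cycle has a unique legal parsing, and that the location of~$C_6(x,y)$ along~$P(x)$ is monotone in the pull parameter, both demand careful control of the lattice-path factorizations, whereas part~(i) is a direct, if lengthy, verification followed by the routine symmetric-difference computation.
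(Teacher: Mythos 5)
The paper does not prove this lemma at all: it is quoted verbatim from Proposition~3 of the predecessor paper~\cite{gregor-muetze-nummenpalo:18}, and the silent justification for transporting that result into the top two levels of $Q_{n,\ell}$ is precisely the prefixing reduction $\varrho_{\alpha,0}$ you open with, mirroring the proof of Lemma~\ref{lem:Ypaths}(i). So your decision to push everything into $Q_{2d}$ and argue there is the right move and matches the paper's intent; there is no competing proof in the paper itself to compare against.

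Beyond the reduction, though, what you hand in is a plan, not a proof, and you acknowledge this yourself. Part~(i) is closest to being finishable: enumerating the six vertices, checking lexicality of each of the six edges against the scan-order definition, and then the symmetric-difference bookkeeping is a legitimate (if tedious) route; do make explicit that all six vertices of $C_6(x,y)$ lie on $P(x)\cup P(y)$ (it follows from the non-incidence of the two $P(x)$-edges together with the disjointness of the $P(y)$-edge, and the degree-count needs it). Part~(ii) rests on a unique-parsing claim that you state but do not establish. An edge of $C_6(x,y)$ is a concrete pair of bitstrings in which two of the three $*$-positions are already instantiated and none of them is marked; you are claiming that this pair determines $d$ and the decomposition into $u_0',u_1,\dots,u_{d+1},v_1,\dots,v_d$, hence the three $*$-positions. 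That is plausible, but ``peel off maximal Dyck blocks'' is only a heuristic --- you would need to prove that the greedy parse is the unique legal one, which is a genuine prefix/suffix-maximality argument on lattice-path heights of the same flavor as Lemma~\ref{lem:paths} but not a corollary of it. Part~(iii) asserts monotonicity of the shared edge pair along $P(x)$ in the pull parameter with no supporting computation; this is the most delicate claim in the lemma, since it is exactly what prevents flipping 6-cycles rooted at the same path from interfering, and proving it means actually unwinding the row-major scan order defining the $0$- and $1$-lexical matchings as one walks along $P(x)$. As written, (ii) and (iii) are the two claims that carry the lemma, and both are left as gaps.
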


\section{Proof of Theorem~\ref{thm:gmlc}}
\label{sec:connect}

With Lemmas~\ref{lem:long},~\ref{lem:4cyc-global}~and~\ref{lem:6cyc} in hand, we are now ready to prove Theorem~\ref{thm:gmlc}.
Recall that $\cT_{n,\ell}=0^{2\ell-3}C^+_{2\ell-3,2\ell-3}$ is the set of rooted trees on $m+\ell-1$ edges with a root of degree at least~2 and the leftmost leaf in depth at least~$2\ell-3$.
Each $x\in \cT_{n,\ell}$ has the form $x=0^{2\ell-3}u_0\,1 \cdots u_{2\ell-4}\,1\,u_{2\ell-3}$ with $u_0,\ldots,u_{2\ell-3}\in D$, $u_{2\ell-3}\neq \varepsilon$.
Considering the right factorization $u_{2\ell-3}=v\,0\, w\,1$ with $v,w\in D$, we say that $x$ is \emph{right-empty} if $w=\varepsilon$, and we say that $x$ is \emph{right-full} if $u_0=\cdots=u_{2\ell-4}=v=\varepsilon$ (and hence $w\neq \varepsilon$).
In terms of trees, $w$ is the subtree rooted at the rightmost child of the root of~$x$.

\begin{proof}[Proof of Theorem~\ref{thm:gmlc}]
The case $\ell=1$ of the theorem was proved in \cite{MR3483129}, so we now assume that $\ell\geq 2$.
Let $m\geq 1$, $n:=2m+1$, $2\le \ell\le m+1$, and consider the subgraph $Q_{n,\ell}$ of the $n$-cube induced by the middle $2\ell$ levels.

Let~$\cC_{n,\ell}$ be the cycle factor in the graph~$Q_{n,\ell}$ defined in~\eqref{eq:cfac}, let $\cF_{n,\ell}$ be the set of flipping 4-cycles from Lemma~\ref{lem:4cyc-global}, and let~$\cS_{n,\ell}$ be the set of flipping 6-cycles defined in~\eqref{eq:c6xy}.
By the choice of~$\cF_{n,\ell}$ in Lemma~\ref{lem:4cyc-global}, the symmetric difference $\cC_{n,\ell}\bigtriangleup \cF_{n,\ell}$ of the edge sets of $\cC_{n,\ell}$ and $\cF_{n,\ell}$ is a cycle factor with the property that every cycle traverses a path from the set~$Y_\ell$ of edges between the top two levels of~$Q_{n,\ell}$ (levels $m+\ell-1$ and $m+\ell$ in $Q_n$), so to complete the proof it is enough to show that long cycles can be joined to a single cycle via flipping 6-cycles from~$\cS_{n,\ell}$.

Consider two long cycles $L,L'$ containing paths $P(x),P(y)\in Y_\ell$ with first vertices $x,y\in C_{2\ell-3,2\ell-3}^+$, respectively, such that $(x,y)$ is a flippable pair.
By Lemma~\ref{lem:6cyc}~(i), $(L\cup L')\bigtriangleup C_6(x,y)$ forms a single cycle on the same vertex set as~$L\cup L'$, i.e., this joining operation reduces the number of long cycles in the factor by one; see Figure~\ref{fig:c6xy}.
Recall that in terms of rooted trees, $0^{2\ell-3}y\in\cT_{n,\ell}$ is obtained from $0^{2\ell-3}x\in\cT_{n,\ell}$ by a pull operation; see Figure~\ref{fig:pull}.
We repeat this joining operation until all long cycles in $\cC_{n,\ell}$ are joined to a single cycle.
For this purpose we define an auxiliary graph~$\cH_{n,\ell}$ whose nodes are the equivalence classes of trees from~$\cT_{n,\ell}$ under heavy and light rotations; see~Figure~\ref{fig:hlrot}.
By Lemma~\ref{lem:long}, the nodes of this graph correspond to maximal sets of paths from~$Y_\ell$ that lie on the same long cycle.
Moreover, for the edge set of $\cH_{n,\ell}$ we take all pairs of sets that contain the two trees that form a flippable pair (differing by a pull operation).

To complete the proof of Theorem~\ref{thm:gmlc}, it therefore suffices to prove that the auxiliary graph~$\cH_{n,\ell}$ is connected.
Indeed, if~$\cH_{n,\ell}$ is connected, then we can pick a spanning tree in~$\cH_{n,\ell}$ corresponding to a collection of flipping 6-cycles $\cS_{n,\ell}'\seq \cS_{n,\ell}$, such that the symmetric difference $\cC_{n,\ell} \bigtriangleup (\cF_{n,\ell} \cup \cS_{n,\ell}')$ forms a Hamilton cycle in~$Q_{n,\ell}$.
Of crucial importance here are properties~(ii) and~(iii) in Lemma~\ref{lem:6cyc}, which ensure that whatever subset of flipping 6-cycles we use in this joining process, they will not interfere with each other, guaranteeing that each flipping 6-cycle indeed reduces the number of long cycles by one, as desired.
Recall also that flipping 4-cycles and flipping 6-cycles are edge-disjoint by definition, so they do not interfere with each other either.

\begin{figure}
\makebox[0cm]{ % artificial box to center the picture
\includegraphics{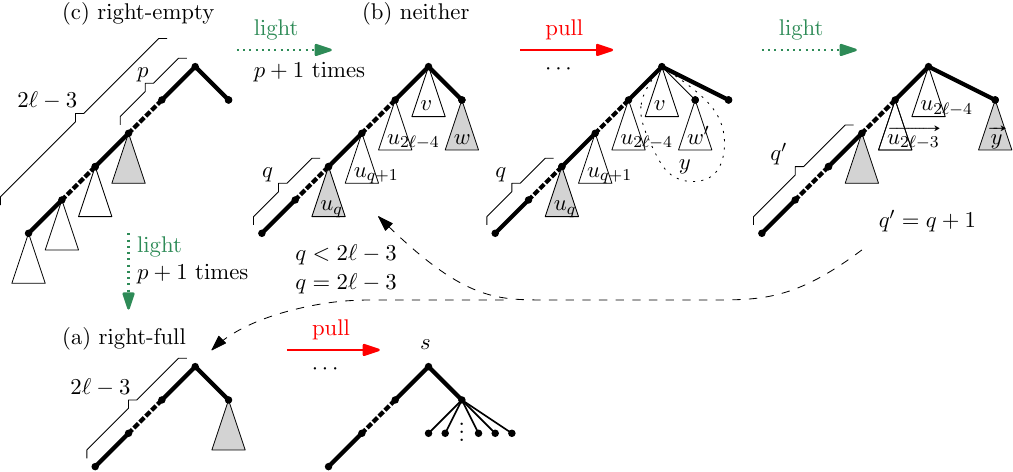}
}
\caption{
Transformation of trees from~$\cT_{n,\ell}$ into the tree~$s$ in the proof of Theorem~\ref{thm:gmlc}.
Gray subtrees are nonempty, white subtrees are possibly empty.
Spine edges are drawn bold.
}
\label{fig:connect}
\end{figure}

At this point we reduced the problem of proving that $Q_{n,\ell}$ has a Hamilton cycle to showing that the auxiliary graph~$\cH_{n,\ell}$ is connected, which is much easier.
Indeed, all we need to show is that any rooted tree from~$\cT_{n,\ell}$ can be transformed into any other tree from~$\cT_{n,\ell}$ by a sequence of heavy rotations, light rotations, pulls and their inverse operations (actually, we shall only use light rotations and pulls in our proof).
Recall that heavy and light rotations correspond to following the same long cycle, and a pull corresponds to a joining operation.
For this we show that any rooted tree $x\in \cT_{n,\ell}$ can be transformed into the special tree $s:=0^{2\ell-3}\,1^{2\ell-3}\,0\,0\,1 \cdots 0\,1\,0\in \cT_{n,\ell}$, i.e., a right-full tree with a star rooted at the center as the rightmost child of the root.
To achieve this, we distinguish three cases; see Figure~\ref{fig:connect}.
\begin{enumerate}
\item[(a)] \emph{$x$ is right-full.}
We pull edges within the subtree rooted at the rightmost child of the root until this subtree is a star rooted at the center, which produces the desired tree~$s$.

\item[(b)] \emph{$x$ is neither right-full nor right-empty.}
Let $q$ be the distance between the left end vertex of the spine and the closest nonempty subtree attached to the spine.
That is, $q$ is the smallest index~$i$ such that $u_i\ne \varepsilon$ if such $u_i$ exists, and $q=2\ell-3$ if $u_0=\cdots=u_{2\ell-4}=\varepsilon$ (as $x$ is not right-full, $v\neq \varepsilon$ in this case).
First, we repeatedly pull the single edge from the rightmost leaf of~$x$ all the way towards the root, arriving at a right-empty tree.
After a single light rotation we obtain a tree that is not right-empty and that is either right-full (if $q=2\ell-3$), and then we conclude the argument as in case~(a), or in which the distance~$q'$ between the left end vertex of the spine and the closest nonempty subtree has increased by~1 (if $q<2\ell-3$), i.e., we have $q'=q+1$.
Repeating the argument from case~(b) hence terminates after at most $2\ell-2$ repetitions.

\item[(c)] \emph{$x$ is right-empty.}
Let $p$ be the distance between the root and the closest nonempty subtree attached to the spine.
After exactly $p+1$ light rotations, we are in case~(a) or~(b).
\end{enumerate}

This shows that~$\cH_{n,\ell}$ is connected, and thus completes the proof.
\end{proof}

In our construction of the Hamilton cycle $\cC_{n,\ell} \bigtriangleup (\cF_{n,\ell} \cup \cS_{n,\ell}')$ described in the previous proof, the choice of the set of flipping 6-cycles~$\cS_{n,\ell}'$ depends on the global structure of long cycles.
As explained in Section~\ref{sec:long}, our Lemma~\ref{lem:long} does not provide any information about the global structure of these cycles, which is the main obstacle that prevents us from translating our construction to a polynomial-time algorithm for computing the next vertex on the Hamilton cycle, i.e., to an algorithm that only has local information about the current vertex in deciding which bit to flip next.
Without explicitly constructing all long cycles, which may take time and space that are exponential in~$n$, we do not even know how many 6-cycles should be selected into the set~$\cS_{n,\ell}'$.

\section{Proof of Theorem~\ref{thm:gk}}
\label{sec:gk}

For any chain~$C$, we let $|C|$ denotes its length, i.e., the number of *s in~$C$.
For any chain~$C$ with $|C|\geq 2$, we let $f(C)$ and $\ell(C)$, respectively, denote the chains obtained by replacing the first two *s or the last two *s in~$C$ by~0 and~1.
Note that if $|C|\geq 2$, then we have $f(\ell({*}C{*}))=\ell(f({*}C{*}))$.
For any chain~$C$, we denote its bottom end vertex by~$b(C)$ and its top end vertex by~$t(C)$.
Recall that $b(C)$ and $t(C)$ are obtained from~$C$ by replacing all *s by~0s or~1s, respectively.

Our goal is to order the chains of the Greene-Kleitman SCD in $Q_n$, $n\geq 2$, so that any consecutive pair of chains can be joined, alternatingly at their top ends or bottom ends, to a Hamilton cycle.
Formally, given an SCD in~$Q_n$, a \emph{cycle ordering} is a sequence $C_1,\ldots,C_k$ of the chains of this SCD for which there exists a set of edges~$E$ of~$Q_n$ such that $C_1\cup C_2\cup\cdots\cup C_k\cup E$ is a Hamilton cycle that traverses the chains in this order, either from bottom to top or vice versa, and moreover if $|C_i|=0$ for some $i\in\{1,\ldots,k\}$ then the two edges from~$E$ incident with this one-vertex chain must have their other end vertices in distinct levels.
The following simple but powerful lemma shows that the direction in which each chain is traversed along the Hamilton cycle (upwards or downwards) is determined only by the chain length.

\begin{lemma}
\label{lem:dir}
Let $\Lambda_n$ be a cycle ordering of chains of an SCD in~$Q_n$, $n\geq 2$.
Then in the corresponding Hamilton cycle, any two chains $C$ and $C'$ with $|C|\equiv |C'| \cmod 4$ are traversed in the same direction.
\end{lemma}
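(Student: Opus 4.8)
The plan is to track how the direction of traversal changes as we move from one chain to the next along the cycle ordering $\Lambda_n$, and to show that passing between two consecutive chains in a cycle ordering changes the parity class of the length modulo $4$ in exactly the right way to keep the direction determined by $|C| \bmod 4$. First I would fix the convention that a chain is traversed \emph{upwards} if we enter it at its bottom vertex $b(C)$ and leave at its top vertex $t(C)$, and \emph{downwards} otherwise; since a chain has two endpoints and the cycle enters and leaves each chain exactly once, this dichotomy is well-defined. By the definition of a cycle ordering, consecutive chains $C$ and $C'$ are joined top-to-top or bottom-to-bottom, alternatingly, with the single exception of consecutive length-$1$ chains joined bottom-to-top.

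Next I would analyze a single step $C \to C'$ in the ordering. If $C$ and $C'$ are joined top-to-top, then the cycle leaves $C$ at $t(C)$ and enters $C'$ at $t(C')$; hence if $C$ was traversed upwards, $C'$ is traversed downwards, and vice versa, so the direction flips. Symmetrically, a bottom-to-bottom join also flips the direction. Since these two join types alternate along the ordering, after two steps (one top-to-top followed by one bottom-to-bottom, or vice versa) the direction returns to what it was. The key remaining point is the relation between the join type and the chain length: the crucial observation is that the endpoints $b(C)$ and $t(C)$ of a chain $C$ lie on levels $\tfrac{n-|C|}{2}$ and $\tfrac{n+|C|}{2}$ respectively, so the parity of the level of an endpoint is governed by $|C| \bmod 4$ together with the (fixed) parity of $n$. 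Because the $n$-cube is bipartite with color classes given by the parity of the level, consecutive chains joined by a single cube-edge at their tops must have tops on levels of opposite parity, and analogously for bottoms; I would turn this into the statement that $|C|$ and $|C'|$ differ by $2 \bmod 4$ whenever $C, C'$ are consecutive non-length-$1$ chains in the ordering. (The length-$1$ exception is harmless since all length-$1$ chains are congruent mod $4$ and are handled by the bottom-to-top rule, which I would check separately preserves the invariant.)

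Putting these together, I would formulate the invariant: for every chain $C$ in the ordering, the direction of traversal of $C$ is an explicit function of $|C| \bmod 4$ (and of $n \bmod 2$ and a global sign), and prove it by induction along the ordering. The base case is the first chain; the inductive step combines "the join type flips the direction" with "each step changes $|C| \bmod 4$ by $2$" to conclude that the direction-versus-$(|C|\bmod 4)$ correspondence is preserved. In particular any two chains with $|C| \equiv |C'| \pmod 4$ receive the same direction, which is exactly the claim.

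**Main obstacle.** I expect the delicate part to be bookkeeping the two alternating join types together with the length-$1$ exception: one must verify that the bottom-to-top connection between two length-$1$ chains is consistent with the alternation pattern and does not break the parity argument, and that the "each step changes length by $2\bmod 4$" claim is correctly derived from bipartiteness at the endpoints rather than merely asserted. The geometric content — that endpoints sit on levels $(n\mp|C|)/2$ and that a cube-edge between two chain-tops forces those tops onto adjacent (hence opposite-parity) levels — is elementary, but assembling it cleanly with the alternation into a single induction is where care is needed.
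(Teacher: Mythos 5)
Your approach is essentially the same as the paper's: track the traversal direction along the cycle ordering, observe that each top-to-top or bottom-to-bottom join flips the direction while a bottom-to-top join between two length-$1$ chains preserves it, and observe that consecutive chains' lengths differ by exactly~$2$ (except in the length-$1$/length-$1$ case). Your bipartiteness derivation of the ``lengths differ by~$2$'' fact is a nice explicit justification of what the paper states as an observation.

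One small point to be careful of in writing this up as a single-step induction: for even~$n$ there is exactly one chain of length~$0$, which is a single vertex and therefore has no traversal direction, so the inductive step ``the join flips the direction of the chain'' is not literally meaningful at that chain. The paper handles this explicitly by noting that the two length-$2$ neighbours of the length-$0$ chain are traversed in the same direction. Your observation that ``after two alternating joins the direction returns to what it was'' does cover this case, but the induction should be phrased so that the step through the length-$0$ chain is treated as a two-step composite (or the direction of the length-$0$ chain should be defined by convention and checked to be consistent).
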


This lemma holds for arbitrary SCDs, in particular for the SCD arising from the Greene-Kleitman construction.

\begin{proof}
This is a consequence of the following observations:
After traversing a chain~$C$ with $|C|\geq 2$, the next chain in the cycle ordering has either length~$|C|-2$ or $|C|+2$, and is traversed in the opposite direction in both cases.
Similarly, if $|C|=1$, the next chain in the cycle ordering has either length~$|C|=1$ or~$|C|+2=3$, and is traversed in the same direction or the opposite direction, respectively.
Moreover, if $|C|=0$, then the previous and next chain have length~$|C|+2=2$, and by the condition that the two edges of the cycle incident with this one-vertex chain must have their other end vertices in distinct levels, both chains of length~2 are traversed in the same direction.
\end{proof}

We now define a cycle ordering~$\Lambda_n$, $n\geq 2$, for the Greene-Kleitman SCD; see Figure~\ref{fig:scd} for illustration.
The corresponding Hamilton cycle is oriented so that it traverses the longest chain ${*}^n$, which will be the first in the ordering~$\Lambda_n$, from bottom to top.
Our construction works inductively, and the induction step goes from~$n$ to $n+2$, with separate rules for even and odd~$n$.
The base cases are $n=0$ and $n=1$, for which the entire cube consists only of a single vertex and a single edge, respectively, so for these cases the notion of a cycle ordering is not defined.

For even~$n$, we define $\Lambda_0:=\varepsilon$, and for $n\geq 0$ and given $\Lambda_n=:C_1,\ldots,C_N$ we define $\Lambda_{n+2}:=\rho(\Lambda_n)=\rho(C_1),\ldots,\rho(C_N)$ with
\begin{subequations}
\label{eq:lambda}
\begin{equation}
\label{eq:rev}
\rho(C):=\begin{cases}
\lambda(C) & \text{if } |C|\equiv n \cmod 4, \\
\lambda(C)^R & \text{if } |C|\not\equiv n \cmod 4,
\end{cases}
\end{equation}
and
\begin{equation}
\label{eq:desc0}
\lambda(C):=\begin{cases}
{*}C{*}, f({*}C{*}), f(\ell({*}C{*})), \ell({*}C{*}) & \text{if } |C|\geq 2, \\
{*}C{*}, 0C1 & \text{if } |C|=0.
\end{cases}
\end{equation}
We call the chains of $\lambda(C)$ arising from~$C$ the \emph{descendants of~$C$}.
Essentially this rule replaces each chain~$C$ in $\Lambda_n$ by its descendants $\lambda(C)$, where the order of descendants can be reversed, indicated by the superscript~$R$, depending on the length of~$C$ modulo~4.

For odd~$n$, we define $\Lambda_1:={*}$, and for $n\geq 1$ and given $\Lambda_n$ we define $\Lambda_{n+2}:=\rho(\Lambda_n)$, where $\rho$ is as before and
\begin{equation}
\label{eq:desc1}
\lambda(C):=\begin{cases}
{*}C{*}, \ell({*}C{*}), \ell(f({*}C{*})), f({*}C{*}) & \text{if } |C|\geq 3, \\
{*}C{*}, \ell({*}C{*}), f({*}C{*}) & \text{if } |C|=1.
\end{cases}
\end{equation}
\end{subequations}

\begin{figure}
\includegraphics{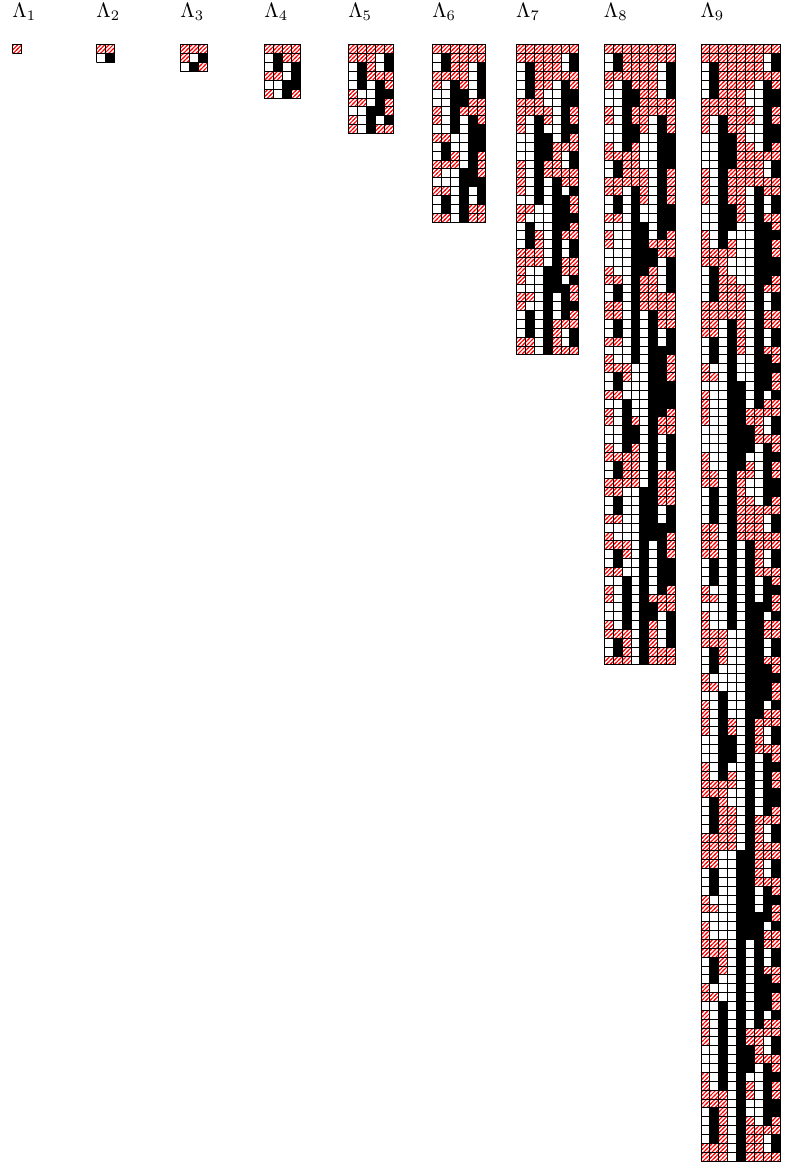}
\caption{The cycle orderings $\Lambda_n$ of Greene-Kleitman chains for $n=1,\ldots,9$.
In this figure, 1-bits are drawn as black squares, 0-bits as white squares, and *s as hatched squares.
Figure~\ref{fig:hc7}~(f) is obtained by traversing the chains of~$\Lambda_7$ up and down alternatingly.
}
\label{fig:scd}
\end{figure}

\begin{lemma}
\label{lem:lambda-scd}
$\Lambda_n$ contains every chain of the Greene-Kleitman SCD exactly once.
\end{lemma}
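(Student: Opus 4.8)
The plan is to argue by induction on~$n$ in steps of two, treating even and odd~$n$ separately, with base cases $n=0$ (where $\Lambda_0=\varepsilon$ is the unique length-$0$ chain of~$Q_0$) and $n=1$ (where $\Lambda_1={*}$ is the unique chain of~$Q_1$); in both base cases the Greene-Kleitman SCD consists of a single chain and the claim is immediate. For the inductive step I assume that $\Lambda_n$ lists every Greene-Kleitman chain of~$Q_n$ exactly once. Since $\Lambda_{n+2}=\rho(\Lambda_n)$ is obtained by replacing each chain~$C$ of~$\Lambda_n$ by its list of descendants~$\lambda(C)$, possibly in reverse order, the multiset of chains occurring in~$\Lambda_{n+2}$ equals $\biguplus_{C}\lambda(C)$, the union taken over all Greene-Kleitman chains~$C$ of~$Q_n$. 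Hence it suffices to show that this multiset is exactly the set of Greene-Kleitman chains of~$Q_{n+2}$, each occurring once.

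The tool I would use is the classical recursive description of the Greene-Kleitman SCD, which follows directly from the parenthesis-matching definition in Section~\ref{sec:scd}: appending a bit to a vertex and re-running the bracket matching shows that the SCD of~$Q_{n+1}$ is obtained from that of~$Q_n$ by replacing each chain $(x_0,\dots,x_h)$ of length~$h$ by the \emph{extended} chain $(x_00,\dots,x_h0,x_h1)$ of length~$h+1$ together with, when $h\ge1$, the \emph{truncated} chain $(x_01,\dots,x_{h-1}1)$ of length~$h-1$. Iterating this passage twice, a chain~$C$ of~$Q_n$ of length~$h$ gives rise in~$Q_{n+2}$ to chains of lengths $h+2,h,h,h-2$ if $h\ge2$, to chains of lengths $3,1,1$ if $h=1$, and to chains of lengths $2,0$ if $h=0$; moreover distinct chains of~$Q_n$ have disjoint sets of offspring, and these offspring together form precisely the Greene-Kleitman SCD of~$Q_{n+2}$, simply because an SCD is a partition of the vertex set.

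The heart of the argument is then a direct verification that, for every Greene-Kleitman chain~$C$ of~$Q_n$, the list~$\lambda(C)$ from~\eqref{eq:desc0} (respectively~\eqref{eq:desc1}) is exactly the list of offspring of~$C$ just described. Concretely: the bracket pattern of~$C$ gains two fresh outermost unmatched positions under $C\mapsto{*}C{*}$, so ${*}C{*}$ is the doubly-extended chain; matching the first two (respectively last two) unmatched positions merges the first (respectively last) few valleys into one new outermost valley via a left (respectively right) factorization, so $f({*}C{*})$ and $\ell({*}C{*})$ are the two chains obtained by one extension and one truncation, while $f(\ell({*}C{*}))$ is the doubly-truncated chain; and for $|C|=0$ the chain $0C1$ is the length-$0$ offspring. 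Once this identification is in place, $\biguplus_C\lambda(C)$ is precisely the Greene-Kleitman SCD of~$Q_{n+2}$ with each chain occurring exactly once, which together with the induction hypothesis proves the claim.

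I expect the main obstacle to lie in the bookkeeping of this last identification, and in particular the case $|C|=0$ for even~$n$: there both $0C1$ and, for a length-$2$ chain~$C'$, the chain $f(\ell({*}C'{*}))$ are length-$0$ chains of~$Q_{n+2}$, so one cannot tell them apart by length or by the chain type $[--],[+-],[-+],[++]$. The resolution is that $0C1$ is an irreducible Dyck word whereas $f(\ell({*}C'{*}))$ decomposes as a concatenation of at least two nonempty Dyck words, and the unique factorization of a Dyck word into irreducible blocks recovers the parent in each case. For every other length, the at most four offspring of a fixed~$C$ have pairwise distinct lengths or pairwise distinct types, so the list~$\lambda(C)$ has no repetitions and the lists $\lambda(C)$ for different~$C$ are disjoint without any further work.
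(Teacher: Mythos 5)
The overall strategy — reduce the statement to showing that the multiset $\biguplus_C\lambda(C)$, with $C$ ranging over the Greene–Kleitman chains of $Q_n$, is exactly the Greene–Kleitman SCD of $Q_{n+2}$ — agrees with what the paper needs. But the concrete identification you make is false: $\lambda(C)$ is \emph{not} the list of ``append-a-bit-twice'' offspring of~$C$. Your recursion grows $Q_n\to Q_{n+1}\to Q_{n+2}$ by appending a bit at the right end each time, so the chains that arise from a chain~$C$ of $Q_n$ together partition the vertex set $\{xb_1b_2 : x\in C,\ b_1,b_2\in\{0,1\}\}$. The map~$\lambda$, in contrast, inserts the new unmatched positions at \emph{both} ends of the string: already $*C*$ is not equal to $C{*}{*}$ as a chain of $Q_{n+2}$, and the chains of $\lambda(C)$ partition $\{b_1xb_2 : x\in C,\ b_1,b_2\in\{0,1\}\}$ instead. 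For a concrete counterexample take $n=2$ and $C={*}{*}$: then $\lambda(C)=\{{*}{*}{*}{*},\,01{*}{*},\,0101,\,{*}{*}01\}$, whereas two appends applied to $C$ produce $\{{*}{*}{*}{*},\,{*}{*}01,\,{*}01{*},\,0011\}$. The two lists disagree; for instance $01{*}{*}\in\lambda({*}{*})$ contains the vertex $0100=01\cdot0\cdot0$, whose $Q_2$-prefix $01$ lies in the \emph{other} chain of $Q_2$, so $01{*}{*}$ is an append-offspring of the chain~$01$, not of~${*}{*}$. The coincidence for $n=0,1$ (where your identification does hold) is an accident of those tiny cases.

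Consequently the ``heart of the argument'' paragraph does not establish what you claim, and the proof as written has a genuine gap. It is repairable along your lines, but you would need to replace the append-twice recursion by the correct ``prepend one bit and append one bit'' description of how the Greene–Kleitman SCD of $Q_{n+2}$ sits over that of $Q_n$, and then verify (by a parenthesis-matching computation analogous to, but different from, the one you sketched) that $*C*,\,f(*C*),\,\ell(*C*),\,f(\ell(*C*))$ (and in the $|C|=0$ case $*C*,\,0C1$) are exactly the four (respectively two) Greene–Kleitman chains of $Q_{n+2}$ whose vertices have the form $b_1xb_2$ with $x\in C$. The paper sidesteps this entirely: it shows directly that within each $\lambda(C)$ the chains have pairwise distinct (type, length) signatures, resolves the one genuine collision — $[++]$-chains of length~$0$ arising both from $|C|=0$ and $|C|=2$ — by the ``irreducible vs.\ decomposable Dyck word'' criterion you also noticed, and observes that conversely every Greene–Kleitman chain of $Q_{n+2}$ is recovered from a unique~$C$ by reading off its type and outer valleys. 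That direct argument is shorter and avoids invoking any recursion of the SCD across dimensions.
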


\begin{proof}
Observe that for even~$n$, if $|C|\geq 2$ in~\eqref{eq:desc0}, then $*C*$ is a $[--]$-chain of length $|C|+2$, $f(*C*)$ is a $[+-]$-chain of length~$|C|$, $\ell(*C*)$ is a $[-+]$-chain of length~$|C|$, and $f(\ell(*C*))$ is a $[++]$-chain of length~$|C|-2$, and if $|C|=2$, then the lattice path $f(\ell(*C*))\in D$ touches the line $y=0$ at least three times.
If $|C|=0$, on the other hand, then $*C*$ is a $[--]$-chain of length~2, $0C1$ is a $[++]$-chain of length~1, and the lattice path $0C1\in D$ touches the line $y=0$ exactly twice.
Overall, the inductive rule~\eqref{eq:desc0} produces only distinct chains, and every chain of the Greene-Kleitman SCD in~$Q_{n+2}$ is produced.
A similar argument works for odd~$n$, showing that $\Lambda_n$ indeed contains every chain of the Greene-Kleitman SCD exactly once.
\end{proof}

To complete the proof of Theorem~\ref{thm:gk}, it remains to show that any two consecutive chains in $\Lambda_n$ can be joined by an edge between their top ends or bottom ends alternatingly.
For this we need the following simple lemmas that guarantee these connecting edges.

\begin{lemma}
\label{lem:conn-desc}
For any $n\geq 2$ and any chain~$C$ with $|C|\geq 2$, the chains~$C$ and~$f(C)$, and the chains~$C$ and $\ell(C)$ are connected both at their top and bottom ends in~$Q_n$.
\end{lemma}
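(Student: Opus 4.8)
The plan is to verify the claim directly from the bracket-matching description of the Greene--Kleitman SCD, writing $C$ in the form~\eqref{eq:C} and examining the chains $f(C)$ and $\ell(C)$ explicitly. Write $C = u_0 * u_1 * \cdots * u_h$ with $h = |C| \ge 2$ and all $u_j \in D$. Then $f(C)$ is obtained by substituting the first two $*$s by $0$ and $1$, which gives $f(C) = u_0\, 0\, u_1\, 1\, u_2 * u_3 * \cdots * u_h$, a chain of length $h-2$; after the substitution, $0\,u_1\,1$ becomes a valley that merges with its neighbours, so as a string over $\{0,1,*\}$ the chain $f(C)$ reads $u_0' * u_2 * \cdots * u_h$ with $u_0' := u_0\, 0\, u_1\, 1 \in D$. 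Symmetrically $\ell(C) = u_0 * \cdots * u_{h-2} * u_h'$ with $u_h' := u_{h-2}\, 0\, u_{h-1}\, 1 \in D$.

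First I would check the bottom-end connection. The bottom vertex $b(C)$ is obtained from~\eqref{eq:xi} with $i=0$, namely $b(C) = u_0\, 0\, u_1\, 0\, u_2\, 0 \cdots 0\, u_h$ (replace all $*$s by $0$). The bottom vertex $b(f(C))$ is obtained from the length-$(h-2)$ chain $u_0' * u_2 * \cdots * u_h$ by replacing all $*$s by $0$, giving $b(f(C)) = u_0\, 0\, u_1\, 1\, u_2\, 0\, u_3\, 0 \cdots 0\, u_h$. These two bitstrings differ in exactly one coordinate — the bit between $u_1$ and $u_2$, which is $0$ in $b(C)$ and $1$ in $b(f(C))$ — so $b(C)$ and $b(f(C))$ are adjacent in $Q_n$. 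The same computation with $\ell$ in place of $f$, using the last two $*$s, shows $b(C)$ and $b(\ell(C))$ differ in exactly one coordinate (the bit that lies between $u_{h-2}$ and $u_{h-1}$), hence are adjacent. This is the bottom-end half of the claim.

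Next, for the top-end connection, I would run the mirror-image argument using the top vertices, obtained from~\eqref{eq:xi} with $i$ equal to the chain length. The top vertex $t(C) = u_0\, 1\, u_1\, 1 \cdots 1\, u_h$ (all $*$s replaced by $1$). For $f(C)$, whose length is $h-2$, the top vertex is $t(f(C)) = u_0\, 0\, u_1\, 1\, u_2\, 1\, u_3\, 1 \cdots 1\, u_h$, and this differs from $t(C)$ in exactly the coordinate immediately left of $u_1$ (a $1$ in $t(C)$, a $0$ in $t(f(C))$); hence $t(C)$ and $t(f(C))$ are adjacent. Likewise $t(C)$ and $t(\ell(C))$ differ in exactly the coordinate immediately right of $u_{h-1}$, so they are adjacent. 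Since all four edges were exhibited concretely as single-bit flips, the lemma follows. There is no real obstacle here — the only point requiring a moment's care is bookkeeping of which position changes, i.e.\ making sure that the valley $u_1$ (resp.\ $u_{h-1}$) really does get absorbed into a larger valley of $D$ after the substitution, so that the resulting string over $\{0,1,*\}$ is genuinely the chain factorization~\eqref{eq:C} of $f(C)$ (resp.\ $\ell(C)$) and the claimed coordinate is the unique one that differs; this is immediate from the fact that $D$ is closed under the relevant concatenations and that $C$ was already in the canonical form~\eqref{eq:C}.
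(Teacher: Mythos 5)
Your proof is correct and follows essentially the same route as the paper's: the paper writes $C = u\,{*}\,v\,{*}\,C'$ (matching your $u = u_0$, $v = u_1$, $C' = u_2 * \cdots * u_h$), computes $b(C),\ b(f(C)),\ t(C),\ t(f(C))$ explicitly, and observes the single-bit differences after the valleys $v$ and $u$ respectively, then invokes symmetry to dispose of $\ell(C)$. Your extra remark that $u_0\,0\,u_1\,1$ merges into a single valley $u_0'\in D$ is a slight elaboration the paper leaves implicit, but the computations and the identified flip positions are identical.
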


All the connecting edges between top and bottom ends among the descendants of a chain guaranteed by Lemma~\ref{lem:conn-desc} are shown in Figure~\ref{fig:desc}.

\begin{figure}
\centering
\includegraphics{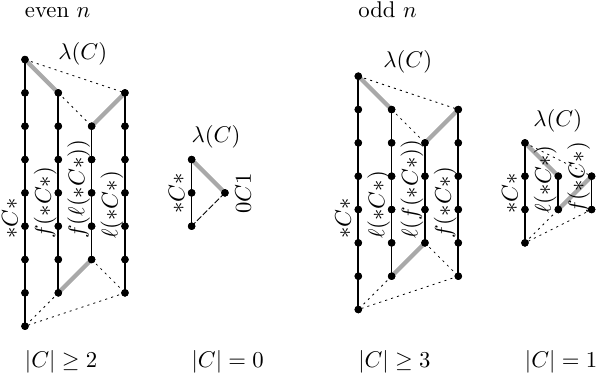}
\caption{Connections between top and bottom ends of the descendants $\lambda(C)$ of a chain~$C$, as guaranteed by Lemmas~\ref{lem:conn-desc} and~\ref{lem:conn-1}.
Bold gray edges are the connections used along the Hamilton cycle.
Dotted edges are present but not used.}
\label{fig:desc}
\end{figure}

\begin{proof}
By symmetry, it suffices to prove the lemma for the chains~$C$ and~$f(C)$.
As $|C|\geq 2$, we may consider the first two *s in~$C$ and write $C=u\,{*}\,v\,{*}\,C'$ with $u,v\in D$, which yields $f(C)=u\,0\,v\,1\,C'$.
From this we obtain that $b(C)=u\,0\,v\,0\,b(C')$ and $b(f(C))=u\,0\,v\,1\,b(C')$, so $b(C)$ and $b(f(C))$ differ exactly in the bit after the valley~$v$.
Similarly, we have $t(C)=u\,1\,v\,1\,t(C')$ and $t(f(C))=u\,0\,v\,1\,t(C')$, so $t(C)$ and $t(f(C))$ differ exactly in the bit after the valley~$u$.
\end{proof}

The next two lemmas are illustrated in the top part of Figure~\ref{fig:join}.

\begin{lemma}
\label{lem:conn-bot}
For any $n\geq 2$ and any two chains~$C,C'$ connected at their bottom ends in~$Q_n$, we have that ${*}C{*}$ and ${*}C'{*}$ are connected at their bottom ends in~$Q_{n+2}$.
\end{lemma}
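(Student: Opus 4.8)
The plan is to verify the claim directly from the definition of the bottom end vertices and the parenthesis-matching structure. Suppose $C$ and $C'$ are chains connected at their bottom ends in $Q_n$; that is, $b(C)$ and $b(C')$ differ in exactly one bit, say position $j$. Recall $b(C)$ is obtained from the string $C\in\{0,1,*\}^n$ by replacing all $*$s by $0$s, and similarly for $b(C')$. Since flipping a single bit changes the level by one, one of $b(C),b(C')$ is one level higher than the other, so the bit at position $j$ is $1$ in exactly one of them and $0$ in the other; in particular $C$ and $C'$ agree in all positions outside $\{j\}$ among their fixed (non-$*$) symbols, and the $*$-pattern is the same.

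First I would pin down what $b({*}C{*})$ looks like: by definition ${*}C{*}$ is the string of length $n+2$ whose first and last symbols are $*$ and whose middle $n$ symbols are $C$, so $b({*}C{*}) = 0\,b(C)\,0$, and likewise $b({*}C'{*}) = 0\,b(C')\,0$. Then $b({*}C{*})$ and $b({*}C'{*})$ differ exactly in position $j+1$ (the shift by one coming from the leading $0$), and in no other position, since $b(C)$ and $b(C')$ differed only in position $j$. Hence $b({*}C{*})$ and $b({*}C'{*})$ are adjacent in $Q_{n+2}$, i.e.\ ${*}C{*}$ and ${*}C'{*}$ are connected at their bottom ends in $Q_{n+2}$, as claimed.

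The only subtlety worth a sentence is making sure that ${*}C{*}$ and ${*}C'{*}$ are genuinely chains of the Greene--Kleitman SCD in $Q_{n+2}$, so that "connected at their bottom ends" is meaningful; but this is exactly the content already recorded in the proof of Lemma~\ref{lem:lambda-scd} (prepending and appending a $*$ to a valid chain string yields a valid chain string, since the operation corresponds to wrapping the bracket sequence in an outer unmatched pair), so it may simply be cited. I do not anticipate a genuine obstacle here: the whole argument is the observation that the bottom-vertex map $b$ commutes with the operation $C\mapsto {*}C{*}$ up to flanking $0$s, hence preserves Hamming distance $1$.
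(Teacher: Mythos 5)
Your proof is correct and uses the same key observation as the paper: $b({*}C{*}) = 0\,b(C)\,0$, so prepending and appending a $*$ preserves Hamming distance~$1$ between bottom vertices. The extra remarks about bit positions and about ${*}C{*}$ being a genuine Greene--Kleitman chain are harmless elaborations of what the paper leaves implicit.
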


\begin{proof}
Note that $b({*}C{*})=0\,b(C)\,0$ and $b({*}C'{*})=0\,b(C')\,0$.
Consequently, as~$C$ and~$C'$ are connected at their bottom ends, $b(C)$ and $b(C')$ differ in exactly one bit, implying that $b(*C*)$ and $b(*C'*)$ also differ in exactly one bit.
\end{proof}

\begin{lemma}
\label{lem:conn-top0}
For any $n\geq 2$ and any chain~$C$ with $|C|\geq 2$ in~$Q_n$, we have that $\ell({*}C{*})$ and $\ell({*}f(C){*})$ are connected at their bottom ends in~$Q_{n+2}$.
Specifically, if $|C|=2$ and $C = u\,{*}\,v\,{*}\,w$ with $u,v,w\in D$, then we have
\begin{subequations}
\begin{equation}
\label{eq:diff03}
\begin{aligned}
\ell({*}C{*})    &= {*}\,u\,{*}\,v\,0\,w\,1, \\
\ell({*}f(C){*}) &= 0\,u\,0\,v\,1\,w\,1.
\end{aligned}
\end{equation}
If $|C|\geq 3$ and $C = C'\,{*}\,u$ with $u\in D$ and $|C'|\geq 2$, then we have
\begin{equation}
\label{eq:diff02}
\begin{aligned}
\ell({*}C{*})    &= {*}\quad C'\,\,\,0\,u\,1, \\
\ell({*}f(C){*}) &= {*}f(C')\,0\,u\,1.
\end{aligned}
\end{equation}
\end{subequations}
Consequently, in the first case, the chains $\ell({*}C{*})$ and $\ell({*}f(C){*})$ differ in exactly three positions, and in the second case, they differ in exactly two positions.
\end{lemma}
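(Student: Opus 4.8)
\textit{Proof plan.} The plan is a direct symbolic verification, organized along the two cases in the statement. Once I write down the explicit forms of $\ell({*}C{*})$ and $\ell({*}f(C){*})$, both the position counts and the bottom-end connection are immediate.

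First I would handle $|C|=2$. Writing $C$ in its chain encoding~\eqref{eq:C} as $C=u\,{*}\,v\,{*}\,w$ with $u,v,w\in D$, applying $f$ to the first two *s gives $f(C)=u\,0\,v\,1\,w$. Then ${*}C{*}={*}\,u\,{*}\,v\,{*}\,w\,{*}$ has four *s, whose last two are the one between $v$ and $w$ and the appended one, so $\ell({*}C{*})={*}\,u\,{*}\,v\,0\,w\,1$; and ${*}f(C){*}={*}\,u\,0\,v\,1\,w\,{*}$ has only two *s, the prepended and the appended one, so $\ell({*}f(C){*})=0\,u\,0\,v\,1\,w\,1$. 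These are the asserted forms~\eqref{eq:diff03}. Comparing the two length-$(n+2)$ encodings symbol by symbol, they disagree in exactly three spots (the leading ${*}$ versus $0$, the symbol between $u$ and $v$, and the symbol between $v$ and $w$) and agree everywhere else. Replacing all *s by $0$s then gives $b(\ell({*}C{*}))=0\,u\,0\,v\,0\,w\,1$ and $b(\ell({*}f(C){*}))=0\,u\,0\,v\,1\,w\,1$, which differ only in the bit directly after the valley $v$, so these chains are connected at their bottom ends in $Q_{n+2}$.

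Next I would handle $|C|\geq 3$. Here I split off the last * of $C$, writing $C=C'\,{*}\,u$ with $u\in D$ and $|C'|=|C|-1\geq 2$. Because $|C'|\geq 2$, the first two *s of $C$ lie inside $C'$, so $f(C)=f(C')\,{*}\,u$. Then ${*}C{*}={*}\,C'\,{*}\,u\,{*}$ has last two *s equal to the one between $C'$ and $u$ and the appended one, giving $\ell({*}C{*})={*}\,C'\,0\,u\,1$, and the identical manipulation gives $\ell({*}f(C){*})={*}\,f(C')\,0\,u\,1$ --- these are the forms~\eqref{eq:diff02}. The two encodings agree outside the block $C'$ versus $f(C')$, and $f(C')$ differs from $C'$ in exactly the two positions where the first two *s of $C'$ became $0$ and $1$, which is the claimed count. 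Replacing all *s by $0$s gives $b(\ell({*}C{*}))=0\,b(C')\,0\,u\,1$ and $b(\ell({*}f(C){*}))=0\,b(f(C'))\,0\,u\,1$; since $|C'|\geq 2$, Lemma~\ref{lem:conn-desc} applied to $C'$ says $b(C')$ and $b(f(C'))$ differ in exactly one bit, hence so do the two bottom-end vertices, completing this case.

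There is no genuine difficulty here; the argument is bookkeeping. The one point that needs care --- and the reason the hypothesis naturally splits at $|C|=2$ versus $|C|\geq 3$ rather than being uniform --- is correctly tracking where the ``last two *s'' of ${*}C{*}$ and ${*}f(C){*}$ land after the wrapping, and, in the second case, checking that the first two *s of $C$ sit inside the prefix $C'$ so that $f(C)=f(C')\,{*}\,u$. Verifying these small positional facts is essentially all the proof requires.
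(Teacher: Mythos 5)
Your proof is correct and takes essentially the same approach as the paper: the paper states tersely that \eqref{eq:diff03} and \eqref{eq:diff02} "can be verified directly using the definition of $f$ and $\ell$" and then reads off the bottom-end connection (in the second case invoking Lemma~\ref{lem:conn-desc} on the substring $C'$), which is exactly the bookkeeping you carry out in detail.
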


\begin{proof}
First of all, the relations \eqref{eq:diff03} and \eqref{eq:diff02} can be verified directly using the definition of~$f$ and~$\ell$.
From \eqref{eq:diff03} we obtain that $b(\ell({*}C{*}))$ and $b(\ell({*}f(C){*}))$ differ exactly in the bit after~$v$.
From \eqref{eq:diff02} we obtain that $b(\ell({*}C{*}))=0\,b(C')\,0\,u\,1$ and $b(\ell({*}f(C){*}))=0\,b(f(C'))\,0\,u\,1$, and as the substrings~$b(C')$ and~$b(f(C'))$ differ in exactly one bit by Lemma~\ref{lem:conn-desc}, this also holds for the entire strings.
\end{proof}

Note that if $|C|\geq 3$ (not if $|C|=2$), then by~\eqref{eq:diff02} the two chains mentioned in Lemma~\ref{lem:conn-top0} are also connected at their top ends, but this connection is irrelevant for us.

\begin{figure}
\makebox[0cm]{ % artificial box to center the picture
\includegraphics{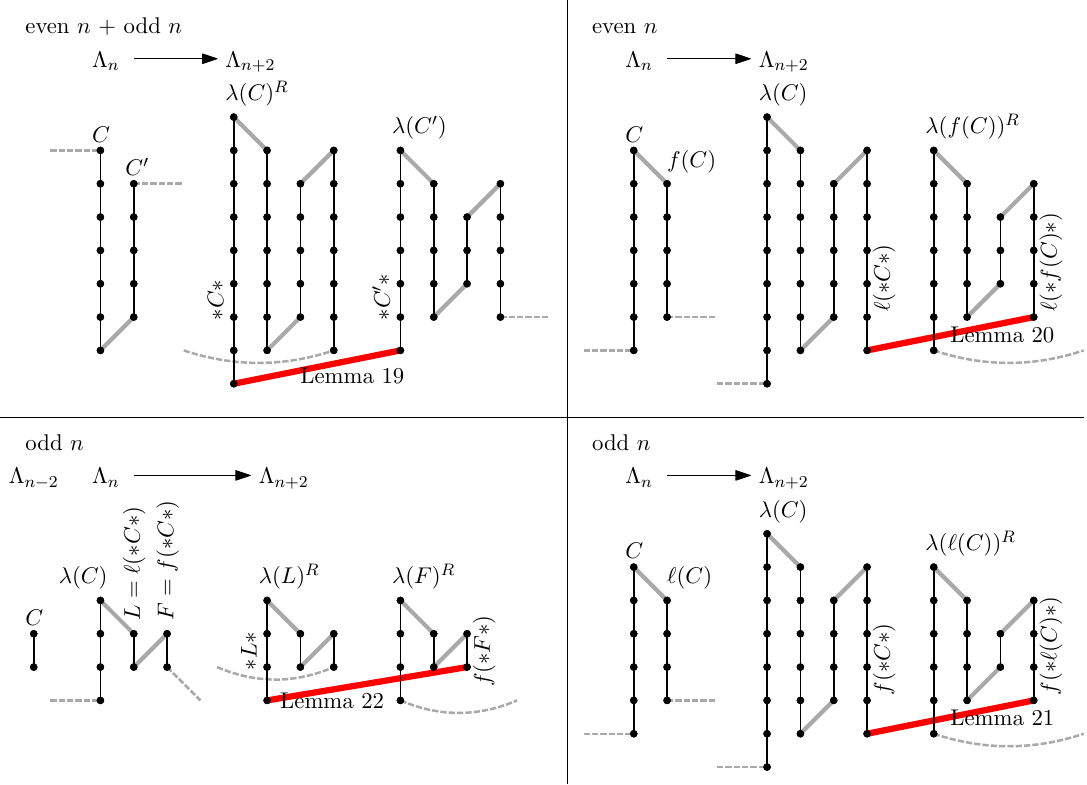}
}
\caption{Joining of the descendants of two consecutive chain from~$\Lambda_n$ in the induction step $n\rightarrow n+2$, via the thick edges guaranteed by Lemmas~\ref{lem:conn-bot}--\ref{lem:conn-1}.
The dashed edges are connections to preceding and subsequent chains on the Hamilton cycle.}
\label{fig:join}
\end{figure}

\begin{proof}[Proof of Theorem~\ref{thm:gk} (even $n$)]
We show that $\Lambda_n$, $n\geq 2$ even, defined in~\eqref{eq:desc0} is a cycle ordering of the Greene-Kleitman chains, by proving that any consecutive pair of chains is connected at their top or bottom ends alternatingly, starting with the first chain $*^n$ of length~$n$ that is traversed from bottom to top.
We will also establish the following additional property~P: For any two consecutive chains~$C$ and~$C'$ connected at their top ends, we either have $C=f(C')$ or $f(C)=C'$.
These invariants can easily be checked for the induction base case~$n=2$, which is given by $\Lambda_2=**,01$.

For the induction step consider $n\geq 2$ to be even, and assume that $\Lambda_n$ is a cycle ordering satisfying property~P.
By Lemma~\ref{lem:conn-desc}, the descendants $\lambda(C)$ for any chain~$C$ from~$\Lambda_n$ can be joined as shown on the left hand side of Figure~\ref{fig:desc}, so we only need to check the connections between the first and last chains among consecutive groups of descendants.
Indeed, if $C$ and~$C'$ are consecutive in~$\Lambda_n$ and joined at their bottom ends, then $C$ is traversed from top to bottom and $C'$ from bottom to top in the Hamilton cycle; see the top left part of Figure~\ref{fig:join}.
Consequently, by Lemma~\ref{lem:dir}, we have $|C|\not\equiv |{*}^n|=n \cmod 4$ and $|C'|\equiv n \cmod 4$, i.e., by~\eqref{eq:rev} the sequence $\Lambda_{n+2}$ contains $\lambda(C)^R$ and $\lambda(C')$, and indeed, the bottom vertex of the last chain of $\lambda(C)^R$, namely $*C*$, is connected to the bottom vertex of the first chain of~$\lambda(C')$, namely $*C'*$, by Lemma~\ref{lem:conn-bot}.

Similarly, if $C$ and~$C'$ are consecutive in~$\Lambda_n$ and joined at their top ends, then $C$ is traversed from bottom to top and $C'$ from top to bottom in the Hamilton cycle; see the top right part of Figure~\ref{fig:join}.
Consequently, by Lemma~\ref{lem:dir}, we have $|C|\equiv n \cmod 4$ and $|C'|\not\equiv n \cmod 4$, i.e., by~\eqref{eq:rev} the sequence $\Lambda_{n+2}$ contains $\lambda(C)$ and $\lambda(C')^R$, and indeed, the bottom vertex of the last chain of $\lambda(C)$, namely $\ell(*C*)$, is connected to the bottom vertex of the first chain of~$\lambda(C')^R$, namely $\ell(*C'*)$, using that by property~P we have either $C=f(C')$ or $f(C)=C'$, so we can invoke Lemma~\ref{lem:conn-top0}.
Moreover, property~P still holds for~$\Lambda_{n+2}$ by the definition~\eqref{eq:desc0} (note that if $|C|=0$, then we have $0C1=f({*}C{*})$).
\end{proof}

To prove Theorem~\ref{thm:gk} for odd~$n$, we need two additional lemmas, illustrated at the bottom part of Figure~\ref{fig:join}.
Lemma~\ref{lem:conn-top1} is the `dual' version of Lemma~\ref{lem:conn-top0}, with $f$ replaced by $\ell$ and vice versa, which we need as the definitions~\eqref{eq:desc0} and~\eqref{eq:desc1} in the first case differ in the ordering of descendants.
Lemma~\ref{lem:conn-1} deals with the special case of consecutive chains of length~1, a situation that never occurs for even~$n$.

\begin{lemma}
\label{lem:conn-top1}
For any $n\geq 3$ and any chain~$C$ with $|C|\geq 3$ in~$Q_n$, we have that $f({*}C{*})$ and $f({*}\ell(C){*})$ are connected at their bottom ends in~$Q_{n+2}$.
Specifically, if $C=u\,{*}\,C'$ with $u\in D$ and $|C'|\geq 2$, then we have
\begin{equation}
\label{eq:diff12}
\begin{aligned}
f({*}C{*})       &= 0\,u\,1\quad C'\,\,\, {*}, \\
f({*}\ell(C){*}) &= 0\,u\,1\,\ell(C')\,{*},
\end{aligned}
\end{equation}
i.e., the chains $f({*}C{*})$ and $f({*}\ell(C){*})$ differ in exactly two positions.
\end{lemma}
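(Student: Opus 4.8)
The plan is to follow the proof of Lemma~\ref{lem:conn-top0} almost verbatim, with the roles of $f$ and $\ell$ interchanged, since Lemma~\ref{lem:conn-top1} is exactly its dual. The first step is to verify the two displayed identities in~\eqref{eq:diff12} directly from the definitions of $f$ and $\ell$. Writing $C=u\,{*}\,C'$ as in the statement --- legitimate because $|C|\geq 3$ guarantees that $C$ contains at least one ${*}$ and leaves $|C'|=|C|-1\geq 2$ --- the string ${*}C{*}$ equals ${*}\,u\,{*}\,C'\,{*}$; as it has at least five ${*}$s, its first two ${*}$s are the prepended one and the one separating $u$ from $C'$, so replacing them by $0$ and $1$ gives $f({*}C{*})=0\,u\,1\,C'\,{*}$. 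Since $|C'|\geq 2$, the last two ${*}$s of $C$ lie inside $C'$, hence $\ell(C)=u\,{*}\,\ell(C')$, and the identical computation yields $f({*}\ell(C){*})=0\,u\,1\,\ell(C')\,{*}$, which is~\eqref{eq:diff12}.

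The second step is the count of differing positions, which is then immediate: $C'$ and $\ell(C')$ differ exactly in the last two ${*}$s of $C'$, which become $0$ and $1$ in $\ell(C')$, while all other symbols of $f({*}C{*})$ and $f({*}\ell(C){*})$ agree.

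The final step is to exhibit the edge between the bottom ends. Taking bottom vertices in~\eqref{eq:diff12} gives $b(f({*}C{*}))=0\,u\,1\,b(C')\,0$ and $b(f({*}\ell(C){*}))=0\,u\,1\,b(\ell(C'))\,0$; by Lemma~\ref{lem:conn-desc} applied to $C'$ (valid since $|C'|\geq 2$), the substrings $b(C')$ and $b(\ell(C'))$ differ in exactly one bit, hence so do the two full strings, which is precisely the asserted connection in $Q_{n+2}$.

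I do not anticipate any genuine obstacle: the only points needing care are checking that the decomposition $C=u\,{*}\,C'$ with $|C'|\geq 2$ is available under the hypothesis $|C|\geq 3$, and that $f$ really does act on the first two ${*}$s of ${*}C{*}$; the remainder is a routine symbolic verification mirroring the proof of Lemma~\ref{lem:conn-top0}.
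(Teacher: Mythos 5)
Your proof is correct and follows essentially the same approach as the paper: verify the identity~\eqref{eq:diff12} by direct symbolic computation, then reduce the connection of bottom ends to Lemma~\ref{lem:conn-desc} applied to $C'$. The only difference is that you spell out the verification of~\eqref{eq:diff12} (and the decomposition $\ell(C)=u\,{*}\,\ell(C')$) explicitly, whereas the paper asserts it as a direct check.
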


\begin{proof}
The relation \eqref{eq:diff12} can be verified directly using the definition of~$f$ and~$\ell$.
From \eqref{eq:diff12} we obtain that $b(f({*}C{*}))=0\,u\,1\,b(C')\,0$ and $b(f({*}\ell(C){*}))=0\,u\,1\,b(\ell(C'))\,0$, and as the substrings~$b(C')$ and~$b(\ell(C'))$ differ in exactly one bit by Lemma~\ref{lem:conn-desc}, this also holds for the entire strings.
\end{proof}

By~\eqref{eq:diff12}, the two chains mentioned in Lemma~\ref{lem:conn-top1} are also connected at their top ends, but this connection is irrelevant for us.

\begin{lemma}
\label{lem:conn-1}
For any odd~$n\geq 3$ and any chain~$C$ with $|C|=1$ in~$Q_{n-2}$, we have that the bottom end of $L:=\ell({*}C{*})$ is joined to the top end of $F:=f({*}C{*})$ in~$Q_n$, and moreover ${*}L{*}$ and $f({*}F{*})$ are connected at their bottom ends in~$Q_{n+2}$.
Specifically, if $C=u\,{*}\,v$ with $u,v\in D$, then we have
\begin{equation}
\label{eq:diff13}
\begin{aligned}
L &= {*}\,u\,0\,v\,1, & \quad\quad {*}L{*}\ &= {*}\,{*}\,u\,0\,v\,1\,{*}, \\
F &= 0\,u\,1\,v\,{*}, & \quad\quad f({*}F{*}) &= 0\,0\,u\,1\,v\,1\,{*},
\end{aligned}
\end{equation}
i.e., both~$L$ and~$F$, as well as~${*}L{*}$ and~$f({*}F{*})$ differ in exactly three positions.
\end{lemma}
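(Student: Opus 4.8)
The plan is to prove both assertions by direct computation from the definitions of $f$ and $\ell$, following the same template as the proofs of Lemmas~\ref{lem:conn-top0} and~\ref{lem:conn-top1}: first pin down the explicit forms of all four chains occurring in~\eqref{eq:diff13}, and then read off, in each of the two adjacency claims, the single coordinate in which the relevant end vertices differ.

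First I would record the explicit forms claimed in~\eqref{eq:diff13}. Since $|C|=1$, we may write $C=u\,{*}\,v$ with $u,v\in D$ (this is exactly the chain factorization of a length-$1$ chain; cf.~\eqref{eq:C}), so that ${*}C{*}={*}\,u\,{*}\,v\,{*}$ is a length-$3$ chain in~$Q_n$. Applying $\ell$ replaces its last two $*$s, namely the $*$ between $u$ and $v$ and the trailing $*$, by $0$ and $1$, which gives $L=\ell({*}C{*})={*}\,u\,0\,v\,1$; applying $f$ replaces the first two $*$s, namely the leading $*$ and the $*$ between $u$ and $v$, by $0$ and $1$, which gives $F=f({*}C{*})=0\,u\,1\,v\,{*}$. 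Prepending and appending a $*$ to $L$ yields ${*}L{*}={*}\,{*}\,u\,0\,v\,1\,{*}$, and in ${*}F{*}={*}\,0\,u\,1\,v\,{*}\,{*}$ the first two $*$s are the prepended one and the one that stood after $v$ in $F$, so $f({*}F{*})=0\,0\,u\,1\,v\,1\,{*}$. This establishes~\eqref{eq:diff13}, and comparing the two chains in each pair coordinatewise shows that $L$ and $F$, and also ${*}L{*}$ and $f({*}F{*})$, each differ in exactly three positions.

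It then remains to verify the two adjacency claims. For the first, I would compute $b(L)=0\,u\,0\,v\,1$ and $t(F)=0\,u\,1\,v\,1$, and observe that these bitstrings agree in every coordinate except the single bit immediately after the block $u$; hence $b(L)$ and $t(F)$ are adjacent in~$Q_n$, i.e.\ the bottom end of $L$ is joined to the top end of $F$. For the second, I would compute $b({*}L{*})=0\,0\,u\,0\,v\,1\,0$ and $b(f({*}F{*}))=0\,0\,u\,1\,v\,1\,0$, and observe that these agree in every coordinate except the single bit between the blocks $u$ and $v$; hence ${*}L{*}$ and $f({*}F{*})$ are connected at their bottom ends in~$Q_{n+2}$.

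The argument involves no genuine difficulty, so the only ``obstacle'' is bookkeeping: one must track which $*$ symbols are affected by $f$ and $\ell$ after the prepend/append operations, keeping in mind that in a length-$3$ chain the middle $*$ is the second $*$ counted from either end, and must not confuse the $*$ of $F$ with an appended $*$ when forming $f({*}F{*})$. As in the earlier lemmas, the relation~\eqref{eq:diff13} also exhibits additional connections (for instance between certain top ends), but these play no role in the construction of~$\Lambda_n$ and can be ignored.
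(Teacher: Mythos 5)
Your proposal is correct and follows essentially the same approach as the paper's own proof: it verifies~\eqref{eq:diff13} by direct computation from the definitions of~$f$ and~$\ell$, then reads off the bottom/top end vertices and identifies the single differing coordinate (the bit immediately after the block~$u$) in each of the two adjacency claims. The bookkeeping of which~$*$s are affected by~$f$ and~$\ell$ is handled correctly throughout.
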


\begin{proof}
The relation \eqref{eq:diff13} can be verified directly using the definition of~$f$ and~$\ell$.
From \eqref{eq:diff13} we obtain that $b(L)=0\,u\,0\,v\,1$ and $t(F)=0\,u\,1\,v\,1$, so these two strings differ exactly in the bit after~$u$.
Similarly, we obtain that $b({*}L{*})=0\,0\,u\,0\,v\,1\,0$ and $b(f({*}F{*}))=0\,0\,u\,1\,v\,1\,0$, so these two strings also differ exactly in the bit after~$u$.
\end{proof}

\begin{proof}[Proof of Theorem~\ref{thm:gk} (odd $n$)]
We show that $\Lambda_n$, $n\geq 3$ odd, defined in~\eqref{eq:desc1} is a cycle ordering of the Greene-Kleitman chains, by proving that any consecutive pair of chains is connected at their top or bottom ends alternatingly, starting with the first chain $*^n$ of length~$n$ that is traversed from bottom to top.
We will also establish the following additional properties~P' and~Q: Property~P' asserts that for any two consecutive chains~$C$ and~$C'$ connected at their top ends, we either have $C=\ell(C')$ or $\ell(C)=C'$.
Property~Q asserts that for any two consecutive chains~$B$ and~$B'$ of length~1 in~$\Lambda_n$, there is a chain~$C$ of length~1 in $\Lambda_{n-2}$ such that $\{B,B'\}=\{\ell(*C*),f(*C*)\}$, i.e., both~$B$ and~$B'$ are descendants of~$C$ (recall the second case of~\eqref{eq:desc1}).
Property~Q implies in particular that no more than two chains of length~1 appear consecutively in~$\Lambda_n$.
These invariants can easily be checked for the induction base case~$n=3$, which is given by $\Lambda_3={*}{*}{*},{*}01,01{*}$.

For the induction step consider $n\geq 3$ to be odd, and assume that $\Lambda_n$ is a cycle ordering satisfying properties~P' and~Q.
By Lemmas~\ref{lem:conn-desc} and~\ref{lem:conn-1}, the descendants $\lambda(C)$ for any chain~$C$ from~$\Lambda_n$ can be joined as shown on the right hand side of Figure~\ref{fig:desc}, so we only need to check the connections between the first and last chains among consecutive groups of descendants.
Indeed, if $C$ and~$C'$ are consecutive in~$\Lambda_n$ and joined at their bottom ends, then $C$ is traversed from top to bottom and $C'$ from bottom to top in the Hamilton cycle; see the top left part of Figure~\ref{fig:join}.
Consequently, by Lemma~\ref{lem:dir}, we have $|C|\not\equiv |{*}^n|=n \cmod 4$ and $|C'|\equiv n \cmod 4$, i.e., by~\eqref{eq:rev} the sequence $\Lambda_{n+2}$ contains $\lambda(C)^R$ and $\lambda(C')$, and indeed, the bottom vertex of the last chain of $\lambda(C)^R$, namely $*C*$, is connected to the bottom vertex of the first chain of~$\lambda(C')$, namely $*C'*$, by Lemma~\ref{lem:conn-bot}.

Similarly, if $C$ and~$C'$ are consecutive in~$\Lambda_n$ and joined at their top ends, then $C$ is traversed from bottom to top and $C'$ from top to bottom in the Hamilton cycle; see the bottom right part of Figure~\ref{fig:join}.
Consequently, by Lemma~\ref{lem:dir}, we have $|C|\equiv n \cmod 4$ and $|C'|\not\equiv n \cmod 4$, i.e., by~\eqref{eq:rev} the sequence $\Lambda_{n+2}$ contains $\lambda(C)$ and $\lambda(C')^R$, and indeed, the bottom vertex of the last chain of $\lambda(C)$, namely $f(*C*)$, is connected to the bottom vertex of the first chain of~$\lambda(C')^R$, namely $f(*C'*)$, using that by property~P' we have either $C=\ell(C')$ or $\ell(C)=C'$, so we can invoke Lemma~\ref{lem:conn-top1}.

The last case to consider are two consecutive chains~$B$ and~$B'$ of length~1 in~$\Lambda_n$, where the bottom end of~$B$ is joined to the top end of~$B'$ if $n\equiv 3 \cmod 4$ or the other way round if $n\equiv 1 \cmod 4$ (recall Lemma~\ref{lem:dir}); see the bottom left part of Figure~\ref{fig:join}.
We only consider the case $n\equiv 3 \cmod 4$, as the other case is symmetric.
By property~Q, we know that $B=\ell(*C*)=:L$ and $B'=f(*C*)=:F$ for some chain~$C$ of length~1 in~$\Lambda_{n-2}$.
By~\eqref{eq:rev}, the sequence $\Lambda_{n+2}$ contains~$\lambda(B)^R$ and $\lambda(B')^R$, and indeed, the bottom vertex of the last chain of~$\lambda(B)^R=\lambda(L)^R$, namely ${*}L{*}$, is connected to the bottom vertex of the first chain of~$\lambda(B')^R=\lambda(F)^R$, namely $f({*}F{*})$, by Lemma~\ref{lem:conn-1}.
Moreover, properties~P' and~Q still hold for~$\Lambda_{n+2}$ by the definition~\eqref{eq:desc1}.
\end{proof}

\subsection{Loopless algorithm}

The following is an immediate consequence of the lemmas we established.

\begin{theorem}
\label{thm:3gray}
For any $n\geq 1$, the ordering of chains~$\Lambda_n$ defined in~\eqref{eq:lambda} is a 3-Gray code, i.e., any two consecutive chains, viewed as strings over the alphabet $\{0,1,*\}$, differ in at most three positions.
\end{theorem}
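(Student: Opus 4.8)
The plan is to deduce the statement from the proof of Theorem~\ref{thm:gk} by the same induction on~$n$ with step $n\to n+2$ (separately for even and odd~$n$), this time tracking the number of positions in which consecutive chains differ rather than the top/bottom connections. The key observation is that each connecting lemma used in the proof of Theorem~\ref{thm:gk} already records the \emph{exact} number of positions in which the two chains in question differ, and this number never exceeds~$3$. For the base cases one checks directly that $\Lambda_1={*}$ consists of a single chain (nothing to verify), that the two chains of $\Lambda_2={*}{*},01$ differ in $2$ positions, and that the consecutive chains of $\Lambda_3={*}{*}{*},{*}01,01{*}$ differ in $2$ and $3$ positions.

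For the induction step, recall that $\Lambda_{n+2}$ is the concatenation of the blocks $\rho(C_1),\dots,\rho(C_N)$, where each $\rho(C_i)$ is $\lambda(C_i)$ or its reverse and $C_1,\dots,C_N$ are the chains of~$\Lambda_n$. Since reversing a block does not change which of its chains are consecutive, every consecutive pair of chains in~$\Lambda_{n+2}$ either (a)~lies inside a single block~$\lambda(C)$, or (b)~consists of the last chain of one block together with the first chain of the next, in which case it arises from two chains $C,C'$ consecutive in~$\Lambda_n$. For case~(a) I would read off from \eqref{eq:desc0} and \eqref{eq:desc1} that every consecutive descendant pair has the form $(D,f(D))$ or $(D,\ell(D))$ --- using the identity $f(\ell({*}C{*}))=\ell(f({*}C{*}))$ to put the middle pair in this shape, and noting that $({*}C{*},0C1)$ for $|C|=0$ is of this form since $0C1=f({*}C{*})$ --- so that the two chains differ in exactly the two positions that the $f$- or $\ell$-operation turns from ${*}{*}$ into $01$; the single exception is the pair $\bigl(\ell({*}C{*}),f({*}C{*})\bigr)$ with $|C|=1$ in the last line of \eqref{eq:desc1}, which differs in exactly $3$ positions by \eqref{eq:diff13}.

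For case~(b), the proof of Theorem~\ref{thm:gk} already pins down the straddling pair: if $C,C'$ are joined at their bottom ends it is $\bigl({*}C{*},{*}C'{*}\bigr)$, which differs in at most $3$ positions since $C$ and $C'$ do by the induction hypothesis and flanking both by a~${*}$ leaves the set of differing positions unchanged; if they are joined at their top ends then, writing $C'=f(C)$ (or symmetrically) by property~P, it is $\bigl(\ell({*}C{*}),\ell({*}f(C){*})\bigr)$ for even~$n$, differing in $3$ or $2$ positions by \eqref{eq:diff03} and \eqref{eq:diff02} respectively, while for odd~$n$, writing the longer of $C,C'$ as $\hat C$ (necessarily of length~$\ge 3$) and $C'=\ell(C)$ or $C=\ell(C')$ by property~P$'$, it is $\bigl(f({*}\hat C{*}),f({*}\ell(\hat C){*})\bigr)$, differing in $2$ positions by \eqref{eq:diff12}; and in the remaining case of two consecutive length-$1$ chains (odd~$n$) it is $\bigl({*}L{*},f({*}F{*})\bigr)$, differing in $3$ positions by \eqref{eq:diff13}. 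As every instance is bounded by~$3$, the induction closes. Since all the numerical estimates are already present in Lemmas~\ref{lem:conn-bot}--\ref{lem:conn-1} and their proofs, I do not expect a genuine obstacle; the only mildly delicate point is the bookkeeping in case~(a) --- verifying that each consecutive descendant pair really does reduce to the form $(D,f(D))$ or $(D,\ell(D))$ after exploiting the commutativity of $f$ and $\ell$, and isolating the one truly $3$-position pair among the chains of length~$1$.
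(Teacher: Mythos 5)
Your proposal is correct and takes essentially the same approach as the paper: the paper's proof is a single sentence ("The theorem follows from Lemmas~\ref{lem:conn-desc}--\ref{lem:conn-1}"), and you are simply spelling out the induction behind that reference, reading off the exact difference counts recorded in those lemmas for the straddling pairs and observing that intra-block descendant pairs reduce to $(D,f(D))$ or $(D,\ell(D))$ via the commutativity $f(\ell({*}C{*}))=\ell(f({*}C{*}))$. The bookkeeping you flag as "mildly delicate" is exactly the content the paper elides, and your treatment of it is sound.
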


\begin{proof}
Note that by the definitions~\eqref{eq:desc0} and~\eqref{eq:desc1}, any two consecutive chains in $\lambda(C)$ differ in exactly two positions.
From this the theorem follows by induction on $n$, using Lemmas~\ref{lem:conn-bot}--\ref{lem:conn-1}.
\end{proof}

We now describe an algorithm which for a given dimension~$n$ computes the sequence of chains in the cycle ordering~$\Lambda_n$ defined in~\eqref{eq:lambda}.
Each chain is represented as a string of length~$n$ over the alphabet~$\{0,1,*\}$ (recall Figure~\ref{fig:scd}), and whenever the next chain has been produced by the algorithm, it is visited in the [Visit] step.
By Theorem~\ref{thm:3gray}, at most three entries of this string change between two visits.
One can easily turn this algorithm into a loopless algorithm for computing the entire Hamilton cycle, flipping a single bit in each step, by replacing the [Visit] step by a loop that moves up and down the vertices on the current chain alternatingly.

For even~$n$, a loopless implementation of this Gray code is given as Algorithm~C, i.e., the algorithm requires only~$\cO(1)$ time between any two consecutive [Visit] steps.
The corresponding variant of the algorithm for odd~$n$ is very similar and can be found in the Appendix.
An implementation of both the even and odd case in C++ is available for download and for demonstration on the Combinatorial Object Server~\cite{cos_chains}.
The initialization required by this algorithm is~$\cO(n)$, and the used space is~$\cO(n)$.
Table~\ref{tab:algoC} shows the execution of this algorithm for the case $n=6$.

\begin{algo}{C}{Loopless Gray code for Greene-Kleitman chains in $Q_n$ for even $n=2m$, $m\geq 1$}
The input of the algorithm is an integer~$m\geq 1$, and the produced chains in the cycle ordering~$\Lambda_{2m}$ are visited in step~C2, where the current chain is stored in the array $C=c_1\ldots c_n$ with $c_i\in\{0,1,{*}\}$.
For maintaining the chain efficiently, the algorithm uses auxiliary arrays $p_1\ldots p_n$, $s_0\ldots s_n$, and $t_1\ldots t_{n+1}$, where if $c_i\in\{0,1\}$, then $p_i$ gives the position of the~0 or~1 matched to~$c_i$, and if $c_i={*}$, then $s_i$ is the position of the closest~$*$ to the right of~$c_i$ and $t_i$ is the position of the closest~$*$ to the left of~$c_i$.
The algorithm also maintains several additional arrays to track the recursive structure of~$\Lambda_n$.
Specifically, the chain constructed in dimension~$2i$, $i=1,\ldots,m$, consists of the middle $2i$ entries of~$C$.
The array $d_0\ldots d_m$ is used to determine the dimension~$2d_m$ in which the next construction step is performed.
This array simulates a stack that generates the transition sequence of the Gray code, with $d_m$ being the value on the top of the stack, an idea first used by Bitner, Ehrlich, and Reingold~\cite{MR0424386}.
In addition, the algorithm maintains arrays $l_1\ldots l_m$, $b_1\ldots b_m$, $o_1\ldots o_m$, $q_1\ldots q_m$, where $l_i$ is the length of the chain in dimension~$2i$, $b_i\in\{+,-\}$ indicates whether we are currently in the first case of~\eqref{eq:desc0} ($+$) or the second case ($-$), $o_i\in\{+,-\}$ indicates whether we are in the first case of~\eqref{eq:rev} ($+$) or the second case ($-$), and $q_i\in\{f,\ell,f^{-1},\ell^{-1}\}$ specifies the next operation on the chain in dimension~$2i$.
\begin{enumerate}[label={\bfseries C\arabic*.}, leftmargin=9mm, noitemsep, topsep=3pt plus 3pt]
\item{} [Initialize] Set $c_i\leftarrow {*}$ for $1\le i\le n$, $s_i\leftarrow i+1$ for $0\le i\le n$, and $t_i\leftarrow i-1$ for $1\le i\le n+1$.
Also set $l_i\leftarrow 2i$ for $1\le i\le m$, $b_1\leftarrow -$ and $b_i\leftarrow +$ for $2\le i\le m$, $o_i\leftarrow +$ and $q_i\leftarrow f$ for $1\le i\le m$, $d_i\leftarrow i$ for $0\le i\le m$.
\item{} [Visit] Visit the chain $c_1\ldots c_n$.
\item{} [Select dimension] Set $i\leftarrow d_m$. Terminate if $i=0$.
\item{} [Perform operation] Depending on the value of $q_i$, branch to one of the following four cases. \\
\textbf{C41.} [Apply $\ell$] If $q_i=\ell$, call $\matchlast(i)$. \\
\textbf{C42.} [Apply $\ell^{-1}$] If $q_i=\ell^{-1}$, call $\matchlast^{-1}(i)$. \\
\textbf{C43.} [Apply $f$] If $q_i=f$, then if $s_{m-i+1}\le m+i$, call $\matchfirst(i)$, otherwise call $\matchlast^{-1}(i+1)$, $\matchfirst(i)$, and $\matchlast(i+1)$. \\
\textbf{C44.} [Apply $f^{-1}$] If $q_i=f^{-1}$, then if $i=m$ or $c_{m-i}={*}$, call $\matchfirst^{-1}(i)$, otherwise call $\matchlast^{-1}(i+1)$, $\matchfirst^{-1}(i)$, and $\matchlast(i+1)$.
\item{} [Select next step]
If $q_i\in\{f,\ell\}$, set $l_i\leftarrow l_i-2$, otherwise set $l_i\leftarrow l_i+2$.
Also set $d_m\leftarrow m$. \\
If $b_i=-$, then go to~C6, else if $q_i\neq f^{-1}$, then go to~C7, otherwise go to~C8.
\item{} [Reached last descendant in case $|C|=0$]
Set $d_i\leftarrow d_{i-1}$ and $d_{i-1}\leftarrow i-1$.
If $o_i=+$, then set $l_i\leftarrow 2$, otherwise set $l_i\leftarrow 4$.
Also set $b_i\leftarrow +$, $o_i\leftarrow -o_i$, and $q_i\leftarrow f$, and go back to~C2.
\item{} [Descendants in case $|C|\geq 2$ remaining]
If $q_i\in\{\ell,\ell^{-1}\}$ set $q_i\leftarrow f^{-1}$, otherwise set $q_i\leftarrow \ell$ if $o_i=+$ and $q_i\leftarrow \ell^{-1}$ if $o_i=-$.
Go back to~C2.
\item{} [Reached last descendant in case $|C|\geq 2$]
Set $d_i\leftarrow d_{i-1}$, $d_{i-1}\leftarrow i-1$, and $j\leftarrow d_i$.
If $j>0$ and $q_j\in\{f,\ell\}$, set $l_i\leftarrow l_i-2$, otherwise set $l_i\leftarrow l_i+2$.
If $l_i=0$, set $b_i\leftarrow -$ and $q_i\leftarrow f^{-1}$, else if $l_i=2$ and $o_i=-$, set $b_i\leftarrow -$ and $q_i\leftarrow f$, otherwise set $q_i\leftarrow f$.
Also set $o_i\leftarrow -o_i$, and go back to~C2.
\end{enumerate}
\end{algo}

The algorithm uses the following four auxiliary functions, which implement the functions $f,f^{-1},\ell,\ell^{-1}$ on the chain~$C$; see Figure~\ref{fig:aux}.
The parameter $i\in\{1,\ldots,m\}$ determines the middle $2i$ entries of~$C$, namely $c_{m-i+1},\ldots,c_{m+i}$, that the function works on.
\begin{itemize}[leftmargin=4mm, noitemsep, topsep=3pt plus 3pt]
\item
$\matchfirst(i)$: Set $\alpha\leftarrow m-i+1$, $\beta\leftarrow s_\alpha$, and $\gamma\leftarrow s_\beta$.
Then set $c_\alpha\leftarrow 0$, $c_\beta\leftarrow 1$, $p_\alpha\leftarrow\beta$, $p_\beta\leftarrow \alpha$, $s_{\alpha-1}\leftarrow\gamma$, and $t_\gamma\leftarrow \alpha-1$.
\item
$\matchfirst^{-1}(i)$: Set $\alpha\leftarrow m-i+1$, $\beta\leftarrow p_\alpha$, and $\gamma\leftarrow s_{\alpha-1}$.
Then set $c_\alpha\leftarrow {*}$, $c_\beta\leftarrow {*}$, $s_{\alpha-1}\leftarrow \alpha$, $s_\alpha\leftarrow \beta$, $s_\beta\leftarrow \gamma$, $t_\gamma\leftarrow\beta$, $t_\beta\leftarrow\alpha$, and $t_\alpha\leftarrow\alpha-1$.
\item
$\matchlast(i)$: Set $\alpha\leftarrow m+i$, $\beta\leftarrow t_\alpha$, and $\gamma\leftarrow t_\beta$.
Then set $c_\beta\leftarrow 0$, $c_\alpha\leftarrow 1$, $p_\beta\leftarrow \alpha$, $p_\alpha\leftarrow \beta$, $s_\gamma\leftarrow\alpha+1$, and $t_{\alpha+1}\leftarrow \gamma$.
\item
$\matchlast^{-1}(i)$: Set $\alpha\leftarrow m+i$, $\beta\leftarrow p_{\alpha}$, and $\gamma\leftarrow t_{\alpha+1}$.
Then set $c_\beta\leftarrow {*}$, $c_\alpha\leftarrow {*}$, $s_\gamma\leftarrow \beta$, $s_\beta\leftarrow \alpha$, $s_\alpha\leftarrow \alpha+1$, $t_{\alpha+1}\leftarrow \alpha$, $t_\alpha\leftarrow \beta$, and $t_\beta\leftarrow \gamma$.
\end{itemize}

Note that the two subcases in steps~\textbf{C43} and~\textbf{C44} are captured by the relations~\eqref{eq:diff02} and~\eqref{eq:diff03} in Lemma~\ref{lem:conn-top0}, respectively.

\begin{figure}[h!]
\includegraphics{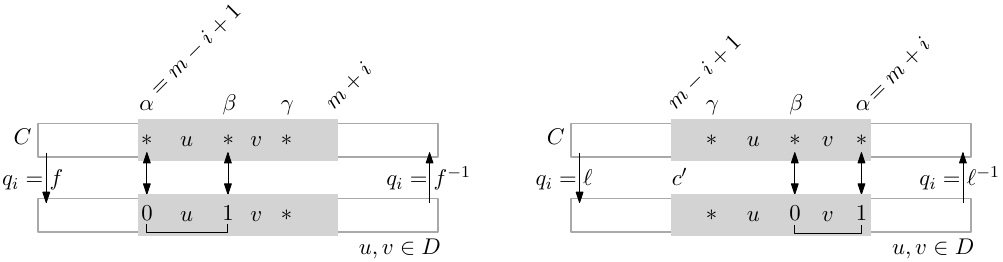}
\caption{Illustration of the four auxiliary functions used in Algorithm~C.
}
\label{fig:aux}
\end{figure}

\begin{table}[h!]
\caption{Protocol of Algorithm~C for~$n=2m=6$. Empty entries have unchanged values compared to the previous iteration/row.
Struck through values have changed twice in some iteration of the algorithm.
}
\makebox[0cm]{ % artificial box to center the picture
\renewcommand*{\arraystretch}{0.9}
\begin{tabular}{l|c|c|c|c|c|c|c|c|c}
\hspace{6mm} & $C$ & $p$ & $s$ & $t$ & $l$ & $b$ & $o$ & $q$ & $d$ \\ \hline
1 & \bivc{$*$}{$*$}{$*$}{$*$}{$*$}{$*$} & \bivc{}{}{}{}{}{} & \bivd{1}{2}{3}{4}{5}{6}{7} & \bivd{0}{1}{2}{3}{4}{5}{6} & \bivav{2}{4}{6} & \biva{$-$}{$+$}{$+$} & \biva{$+$}{$+$}{$+$} & \bivaw{$f$}{$f$}{$f$} & \bivb{0}{1}{2}{3} \\
2 & \bivc{0}{1}{}{}{}{} & \bivc{2}{1}{}{}{}{} & \bivd{3}{}{}{}{}{}{} & \bivd{}{}{0}{}{}{}{} & \bivav{}{}{4} & \biva{}{}{} & \biva{}{}{} & \bivaw{}{}{$\ell$} & \bivb{}{}{}{} \\
3 & \bivc{}{}{}{}{0}{1} & \bivc{}{}{}{}{6}{5} & \bivd{}{}{}{}{7}{}{} & \bivd{}{}{}{}{}{}{4} & \bivav{}{}{2} & \biva{}{}{} & \biva{}{}{} & \bivaw{}{}{$f^{-1}$} & \bivb{}{}{}{} \\
4 & \bivc{$*$}{$*$}{}{}{}{} & \bivc{}{}{}{}{}{} & \bivd{1}{}{}{}{}{}{} & \bivd{}{}{2}{}{}{}{} & \bivav{}{}{\cancel{4}2} & \biva{}{}{} & \biva{}{}{$-$} & \bivaw{}{}{$f$} & \bivb{}{}{}{2} \\
5 & \bivc{}{0}{1}{}{}{} & \bivc{}{3}{2}{}{}{} & \bivd{}{4}{}{}{}{}{} & \bivd{}{}{}{1}{}{}{} & \bivav{}{2}{} & \biva{}{}{} & \biva{}{}{} & \bivaw{}{$\ell$}{} & \bivb{}{}{}{3} \\
6 & \bivc{0}{}{}{1}{}{} & \bivc{4}{}{}{1}{}{} & \bivd{7}{}{}{}{}{}{} & \bivd{}{}{}{}{}{}{0} & \bivav{}{}{0} & \biva{}{}{} & \biva{}{}{} & \bivaw{}{}{$\ell^{-1}$} & \bivb{}{}{}{} \\
7 & \bivc{}{}{}{}{$*$}{$*$} & \bivc{}{}{}{}{}{} & \bivd{5}{}{}{}{}{}{} & \bivd{}{}{}{}{0}{}{6} & \bivav{}{}{2} & \biva{}{}{} & \biva{}{}{} & \bivaw{}{}{$f^{-1}$} & \bivb{}{}{}{} \\
8 & \bivc{$*$}{}{}{$*$}{}{} & \bivc{}{}{}{}{}{} & \bivd{1}{}{}{}{5}{}{} & \bivd{}{}{}{}{4}{}{} & \bivav{}{}{\cancel{4}2} & \biva{}{}{$-$} & \biva{}{}{$+$} & \bivaw{}{}{$f$} & \bivb{}{}{}{2} \\
9 & \bivc{}{}{}{0}{1}{} & \bivc{}{}{}{5}{4}{} & \bivd{}{6}{}{}{}{}{} & \bivd{}{}{}{}{}{1}{} & \bivav{}{0}{} & \biva{}{}{} & \biva{}{}{} & \bivaw{}{$f^{-1}$}{} & \bivb{}{}{}{3} \\
10 & \bivc{0}{}{}{}{}{1} & \bivc{6}{}{}{}{}{1} & \bivd{7}{}{}{}{}{}{} & \bivd{}{}{}{}{}{}{0} & \bivav{}{}{\cancel{0}2} & \biva{}{}{$+$} & \biva{}{}{$-$} & \bivaw{}{}{} & \bivb{}{}{}{2} \\
11 & \bivc{$*$}{$*$}{0}{}{}{} & \bivc{}{}{6}{}{}{3} & \bivd{1}{2}{7}{}{}{}{} & \bivd{}{}{}{}{}{}{2} & \bivav{}{\cancel{2}0}{} & \biva{}{$-$}{} & \biva{}{$-$}{} & \bivaw{}{}{} & \bivb{}{}{1}{3} \\
12 & \bivc{0}{1}{}{}{}{} & \bivc{2}{1}{}{}{}{} & \bivd{7}{}{}{}{}{}{} & \bivd{}{}{}{}{}{}{0} & \bivav{}{}{0} & \biva{}{}{} & \biva{}{}{} & \bivaw{}{}{$\ell^{-1}$} & \bivb{}{}{}{} \\
13 & \bivc{}{}{$*$}{}{}{$*$} & \bivc{}{}{}{}{}{} & \bivd{3}{}{}{6}{}{}{} & \bivd{}{}{0}{}{}{3}{6} & \bivav{}{}{2} & \biva{}{}{} & \biva{}{}{} & \bivaw{}{}{$f^{-1}$} & \bivb{}{}{}{} \\
14 & \bivc{$*$}{$*$}{}{}{}{} & \bivc{}{}{}{}{}{} & \bivd{1}{}{3}{}{}{}{} & \bivd{}{}{2}{}{}{}{} & \bivav{}{}{\cancel{4}2} & \biva{}{}{$-$} & \biva{}{}{$+$} & \bivaw{}{}{$f$} & \bivb{}{}{2}{1} \\
15 & \bivc{}{0}{0}{1}{}{} & \bivc{}{5}{4}{3}{2}{} & \bivd{}{6}{}{}{}{}{} & \bivd{}{}{}{}{}{1}{} & \bivav{\cancel{0}2}{}{} & \biva{$+$}{}{} & \biva{$-$}{}{} & \bivaw{}{}{} & \bivb{}{0}{}{3} \\
16 & \bivc{0}{}{}{}{}{1} & \bivc{6}{}{}{}{}{1} & \bivd{7}{}{}{}{}{}{} & \bivd{}{}{}{}{}{}{0} & \bivav{}{}{\cancel{0}2} & \biva{}{}{$+$} & \biva{}{}{$-$} & \bivaw{}{}{} & \bivb{}{}{}{2} \\
17 & \bivc{$*$}{$*$}{}{}{0}{} & \bivc{}{}{}{}{6}{5} & \bivd{1}{2}{7}{}{}{}{} & \bivd{}{}{}{}{}{}{2} & \bivav{}{\cancel{2}4}{} & \biva{}{$+$}{} & \biva{}{$+$}{} & \bivaw{}{$f$}{} & \bivb{}{1}{0}{3} \\
18 & \bivc{0}{1}{}{}{}{} & \bivc{2}{1}{}{}{}{} & \bivd{7}{}{}{}{}{}{} & \bivd{}{}{}{}{}{}{0} & \bivav{}{}{0} & \biva{}{}{} & \biva{}{}{} & \bivaw{}{}{$\ell^{-1}$} & \bivb{}{}{}{} \\
19 & \bivc{}{}{}{}{$*$}{$*$} & \bivc{}{}{}{}{}{} & \bivd{5}{}{}{}{}{}{} & \bivd{}{}{}{}{0}{5}{6} & \bivav{}{}{2} & \biva{}{}{} & \biva{}{}{} & \bivaw{}{}{$f^{-1}$} & \bivb{}{}{}{} \\
20 & \bivc{$*$}{$*$}{}{}{}{} & \bivc{}{}{}{}{}{} & \bivd{1}{}{5}{}{}{}{} & \bivd{}{}{}{}{2}{}{} & \bivav{}{}{\cancel{4}6} & \biva{}{}{} & \biva{}{}{$+$} & \bivaw{}{}{$f$} & \bivb{}{}{2}{0} \\
\end{tabular}
}
\label{tab:algoC}
\end{table}

\section{Acknowledgements}

We thank Ji\v{r}\'{i} Fink for several valuable discussions about symmetric chain decompositions, and for feedback on an earlier draft of this paper.
We also thank the anonymous reviewers whose suggestions helped improving the presentation.

\bibliographystyle{alpha}
\bibliography{../refs}

\appendix
\section{Loopless algorithm for odd $n$}

\begin{algo}{C'}{Loopless Gray code for Greene-Kleitman chains in $Q_n$ for odd $n=2m+1$, $m\geq 0$}
The input of the algorithm is an integer~$m\geq 0$, and the produced chains in the cycle ordering~$\Lambda_{2m+1}$ are visited in step~C'2.
The algorithm uses the same data structures and auxiliary functions as Algorithm~C before.
The entries $b_i\in\{+,-\}$ now indicate whether we are in the first case of \eqref{eq:desc1} ($+$) or the second case ($-$).
Moreover, the entries of the array~$q$ may now also attain two other values in addition to $\{f,f^{-1},\ell,\ell^{-1}\}$, namely $g:=\ell^{-1}\circ f$ and $g^{-1}:=f^{-1}\circ \ell$, to account for the transitions between the last two chains in the second case of \eqref{eq:desc1}.
\begin{enumerate}[label={\bfseries C'\arabic*.}, leftmargin=11mm, noitemsep, topsep=3pt plus 3pt]
\item{} [Initialize] Set $c_i\leftarrow {*}$ for $1\le i\le n$, $s_i\leftarrow i+1$ for $0\le i\le n$, and $t_i\leftarrow i-1$ for $1\le i\le n+1$.
Also set $l_i\leftarrow 2i+1$ for $1\le i\le m$, $b_i\leftarrow +$ for $1\le i\le m$, $o_i\leftarrow +$ and $q_i\leftarrow \ell$ for $1\le i\le m$, $d_i\leftarrow i$ for $0\le i\le m$.
\item{} [Visit] Visit the chain $c_1\ldots c_n$.
\item{} [Select dimension] Set $i\leftarrow d_m$. Terminate if $i=0$.
\item{} [Perform operation] Depending on the value of $q_i$, branch to one of the following six cases. \\
\textbf{C'41.} [Apply $\ell$] If $q_i=\ell$, call $\matchlast(i)$. \\
\textbf{C'42.} [Apply $\ell^{-1}$] If $q_i=\ell^{-1}$, call $\matchlast^{-1}(i)$. \\
\textbf{C'43.} [Apply $f$] If $q_i=f$, call $\matchfirst(i)$. \\
\textbf{C'44.} [Apply $f^{-1}$] If $q_i=f^{-1}$, call $\matchfirst^{-1}(i)$. \\
\textbf{C'45.} [Apply $\ell^{-1}$, then $f$] If $q_i=g=\ell^{-1}\circ f$, then if $i=m$ call $\matchlast^{-1}(i)$ and $\matchfirst(i)$, otherwise call $\matchlast^{-1}(i)$, $\matchfirst(i)$, and $\matchfirst(i+1)$. \\
\textbf{C'46.} [Apply $f^{-1}$, then $\ell$] If $q_i=g^{-1}=f^{-1}\circ \ell$, then if $i=m$ call $\matchfirst^{-1}(i)$ and $\matchlast(i)$, otherwise call $\matchfirst^{-1}(i+1)$, $\matchfirst^{-1}(i)$, and $\matchlast(i)$.
\item{} [Select next step]
If $q_i\in\{f,\ell\}$, set $l_i\leftarrow l_i-2$, else if $q_i\in\{f^{-1},\ell^{-1}\}$, set $l_i\leftarrow l_i+2$.
Also set $d_m\leftarrow m$.
If $b_i=-$ and $q_i\in\{\ell,g^{-1}\}$, then go to~C6, else if $b_i=-$ and $q_i\in\{g,\ell^{-1}\}$, then go to~C7, else if $q_i\neq \ell^{-1}$, then go to~C8, otherwise go to~C9.
\item{} [Descendants in case $|C|=1$ remaining]
If $q_i=\ell$ set $q_i\leftarrow g$, otherwise set $q_i\leftarrow \ell^{-1}$.
Go back to~C2.
\item{} [Reached last descendant in case $|C|=1$]
Set $d_i\leftarrow d_{i-1}$, $d_{i-1}\leftarrow i-1$, and $j\leftarrow d_i$.
If $j>0$ and $q_j\in\{f,\ell,g\}$, set $l_i\leftarrow l_i-2$, otherwise set $l_i\leftarrow l_i+2$.
If $l_i=1$, set $q_i\leftarrow g^{-1}$, else if $l_i=3$ and $q_{i-1}=\ell^{-1}$, set $b_i\leftarrow +$, $o_i\leftarrow -$ and $q_i\leftarrow \ell$, else if $l_i=3$ and $q_{i-1}\neq \ell^{-1}$, set $q_i\leftarrow \ell$, otherwise set $b_i\leftarrow +$, $q_i\leftarrow \ell$ and $o_i\leftarrow -o_i$.
Go back to~C2.
\item{} [Descendants in case $|C|\geq 3$ remaining]
If $q_i\in\{f,f^{-1}\}$ set $q_i\leftarrow \ell^{-1}$, otherwise set $q_i\leftarrow f$ if $o_i=+$ and $q_i\leftarrow f^{-1}$ if $o_i=-$.
Go back to~C2.
\item{} [Reached last descendant in case $|C|\geq 3$]
Set $d_i\leftarrow d_{i-1}$, $d_{i-1}\leftarrow i-1$, and $j\leftarrow d_i$.
If $j>0$ and $q_j\in\{f,\ell,g\}$, set $l_i\leftarrow l_i-2$, otherwise set $l_i\leftarrow l_i+2$.
If $l_i=1$, set $b_i\leftarrow -$ and $q_i\leftarrow g^{-1}$, else if $l_i=3$ and $o_i=-$, set $b_i\leftarrow -$ and $q_i\leftarrow \ell$, otherwise set $q_i\leftarrow \ell$.
Also set $o_i\leftarrow -o_i$, and go back to~C2.
\end{enumerate}
\end{algo}

Note that the two subcases in steps~\textbf{C'45} and~\textbf{C'46} are captured by the left two equations and the right two equations in~\eqref{eq:diff13} in Lemma~\ref{lem:conn-1}, respectively.

\end{document}